\definecolor{darkblue}{rgb}{0,0,0.6}
\DeclareRobustCommand{\SkipTocEntry}[5]{}
\newcommand*\cocolon{%
	\nobreak
	\mskip6mu plus1mu
	\mathpunct{}%
	\nonscript
	\mkern-\thinmuskip
	{:}%
	\mskip2mu
	\relax
}
\numberwithin{equation}{section}
\theoremstyle{definition}
\newtheorem{defn}[equation]{Definition}
\newtheorem{ex}[equation]{Example}
\newtheorem{const}[equation]{Construction}
\newtheorem{rem}[equation]{Remark}
\newtheorem{theorem}[equation]{Theorem}
\newtheorem{prop}[equation]{Proposition}
\newtheorem{lem}[equation]{Lemma}
\newtheorem{cor}[equation]{Corollary}
\renewcommand{\epsilon}{\varepsilon}
\renewcommand{\theta}{\vartheta}
\renewcommand{\phi}{\varphi}
\newcommand{\bbK}{\mathbb{K}}
\newcommand{\bbN}{\mathbb{N}}
\newcommand{\bbS}{\mathbb{S}}
\newcommand{\bbZ}{\mathbb{Z}}
\newcommand{\cA}{\mathcal{A}}
\newcommand{\cC}{\mathcal{C}}
\newcommand{\cD}{\mathcal{D}}
\newcommand{\cE}{\mathcal{E}}
\newcommand{\cF}{\mathcal{F}}
\newcommand{\cO}{\mathcal{O}}
\newcommand{\cP}{\mathcal{P}}
\newcommand{\cQ}{\mathcal{Q}}
\newcommand{\cU}{\mathcal{U}}
\newcommand{\cX}{\mathcal{X}}
\newcommand{\cY}{\mathcal{Y}}
\DeclareMathOperator*{\colim}{colim}
\DeclareMathOperator{\cofib}{cofib}
\DeclareMathOperator{\fib}{fib}
\DeclareMathOperator{\Fun}{Fun}
\DeclareMathOperator{\id}{id}
\DeclareMathOperator{\inc}{inc}
\DeclareMathOperator{\Ind}{Ind}
\newcommand{\Ldec}{\mathrm{L}}
\newcommand{\st}{\mathrm{st}}
\newcommand{\Ab}{\mathrm{Ab}}
\newcommand{\catinf}{\mathrm{Cat}}
\newcommand{\catrex}{\catinf^{\mathrm{rex}}}
\newcommand{\catst}{\catinf^{\st}}
\newcommand{\catperf}{\catinf^{\mathrm{perf}}}
\newcommand{\Funex}[1]{\Fun^{\mathrm{ex}}_{#1}}
\newcommand{\op}{\mathrm{op}}
\newcommand{\PrL}{\mathrm{Pr}^\mathrm{L}}
\newcommand{\psh}{\mathcal{P}}
\newcommand{\Sp}{\mathrm{Sp}}
\newcommand{\Spc}{\mathrm{An}}
\newcounter{commentcounter}
\newcommand{\bdd}{\mathrm{b}}
\newcommand{\egr}{\mathrm{eg}}
\newcommand{\exct}{\mathrm{ex}}
\newcommand{\biex}{\mathrm{biex}}
\newcommand{\ing}{\mathrm{in}}
\newcommand{\lex}{\mathrm{lex}}
\newcommand{\lloc}{\mathrm{lloc}}
\newcommand{\rloc}{\mathrm{rloc}}
\newcommand{\sh}{\mathrm{Sh}}
\newcommand{\catprst}{\catinf^{\mathrm{prest}}}
\newcommand{\catheart}{\catinf^\heartsuit}
\newcommand{\derb}{\cD^{\bdd}}
\newcommand{\exact}{\mathrm{Exact}}
\newcommand{\exactperf}{\exact^\mathrm{perf}}
\newcommand{\Groth}{\mathrm{Groth}}
\newcommand{\idem}[1]{{#1}^\natural}
\newcommand{\pair}{\mathrm{Pair}}
\newcommand{\shv}{\mathrm{Shv}}
\DeclareMathOperator{\Acycl}{A}
\DeclareMathOperator{\Alg}{Alg}
\DeclareMathOperator{\coker}{coker}
\DeclareMathOperator{\K}{K}
\DeclareMathOperator{\LEx}{\Lambda}
\DeclareMathOperator{\Nat}{Nat}
\DeclareMathOperator{\Proj}{Proj}
\DeclareMathOperator{\stab}{SW}
\let\sp\Sp
\DeclareMathOperator{\Syn}{Syn}
\newcommand{\fun}{\operatorname{Fun}}
\DeclareFontFamily{U}{min}{}
\DeclareFontShape{U}{min}{m}{n}{<-> dmjhira}{}
\newcommand{\yo}{\text{\usefont{U}{min}{m}{n}\symbol{'110}}}
\title{The presentable stable envelope of an exact category}
\author{Marius Nielsen}
\address{Department of Mathematical Sciences, Norwegian University of Science
and Technology, Trondheim, Norway}
\email{marius.v.b.nielsen@ntnu.no}
\author{Christoph Winges}
\address{Fakult\"at f\"ur Mathematik, Universit\"at Regensburg, 93040 Regensburg, Germany}
\email{christoph.winges@ur.de}
\date{}
\begin{document}

\begin{abstract}
    We prove an analogue of the Gabriel--Quillen embedding theorem for exact $\infty$-categories, giving rise to a presentable version of Klemenc's stable envelope of an exact $\infty$-category.
    Moreover, we construct a symmetric monoidal structure on the $\infty$-category of small exact $\infty$-categories and discuss the multiplicative properties of the Gabriel--Quillen embedding. For $E$ an Adams-type homotopy associative ring spectrum, this allows us to identify the symmetric monoidal $\infty$-category of $E$-based synthetic spectra with the presentable stable envelope of the exact $\infty$-category of compact spectra with finite projective $E$-homology.
    
    In addition, we show that algebraic K-theory, considered as a functor on exact $\infty$-categories, admits a unique delooping as a localising invariant.
\end{abstract}

\maketitle
\tableofcontents

\section{Introduction}
Exact $\infty$-categories were introduced by Barwick in \cite{barwick:heart} generalising the notion of exact categories as introduced by Quillen in \cite{quillen:k-theory}. Every stable $\infty$-category admits the structure of an exact $\infty$-category and Klemenc proves in \cite{klemenc:stablehull} that the functor
\[
    \catst \to \exact
\]
sending a small stable $\infty$-category to its associated exact $\infty$-category admits a left adjoint. Let us denote this functor by $\derb(-)$ and name it the \emph{stable envelope}. Klemenc also shows that the unit map
\[
    \cC \to \derb(\cC)
\]
is fully faithful and exhibits $\cC$ as an extension-closed subcategory of its stable envelope.'
However, $\mathcal{D}^{b}(\mathcal{C})$ is manifestly a small category.

In the case of a small exact $1$-category $\mathcal{E}$,
the \emph{Gabriel--Quillen embedding}
\[
    \mathcal{E}\to \psh_\lex^{\mathrm{Ab}}(\mathcal{E})
\]
exhibits $\cE$ as an extension-closed subcategory of a Grothendieck abelian category. Here $\psh_\lex^{\mathrm{Ab}}(\mathcal{E})$ denotes the category of abelian group-valued left exact presheaves.
See \cite[Section~2]{quillen:k-theory}, \cite[Appendix~A]{TT}, \cite[Appendix~A]{buehler} or \cite[Section~2]{kl:cohomology-exact-categories} for various treatments of this embedding.

In this paper we establish an analogous version of the Gabriel--Quillen embedding pertaining to exact $\infty$-categories. More precisely, we prove:

\begin{theorem}[Gabriel--Quillen embedding theorem, \cref{thm:plex,thm:gabriel-quillen}]
 Let $\cC$ be a small exact $\infty$-category.
 Then the Yoneda embedding $\cC \to \psh_\lex(\cC)$ exhibits $\cC$ as an extension-closed subcategory of a Grothendieck prestable $\infty$-category. 

Furthermore, for every pointed cocomplete $\infty$-category $\cD$, precomposing with the Yoneda embedding induces an equivalence
 \[
        \fun^{\mathrm{L}}(\psh_{\lex}(\cC),\cD) \xrightarrow{\yo^{*}} \fun^{\mathrm{ex}}(\cC,\cD).
 \]
Here $\psh_\lex(\cC)\subseteq \psh(\cC)$ denotes the full subcategory spanned by left exact presheaves.
\end{theorem}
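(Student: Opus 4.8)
The plan is to mimic the classical Gabriel–Quillen argument, working with the $\infty$-categorical universal properties of presheaf categories throughout. First I would recall that $\psh_\lex(\cC)$ is, by definition, the localisation of $\psh(\cC)$ at the maps of the form $\colim(\yo(A') \to \yo(A) \to \yo(A'')) \to \yo(A'')$ (geometric realisations of the relevant simplicial objects, or equivalently cofibres in $\psh(\cC)$) for each conflation $A' \rightarrowtail A \twoheadrightarrow A''$ in $\cC$, together with the terminal map $\yo(0) \to 0$; more precisely I would arrange the localisation so that left exact presheaves are exactly those $F$ sending conflations to fibre sequences of anima (equivalently, so that $F$ is additive and sends conflations to short exact sequences in the appropriate sense, using that $\cC$ is pointed). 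The $\infty$-category $\psh(\cC)$ is presentable and the class of maps being inverted is small, so the localisation $L \colon \psh(\cC) \to \psh_\lex(\cC)$ exists, is accessible, and $\psh_\lex(\cC)$ is presentable; the prestable structure is inherited from the natural one on $\psh(\cC)$ once one checks the localisation is compatible with it (it is left exact as a localisation of a prestable category at a set of morphisms between connective objects, so $\psh_\lex(\cC)$ is again prestable, and Grothendieck-ness is the statement that filtered colimits are left exact, which passes through the localisation).

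For the universal property, the key input is the chain of equivalences
\[
    \fun^{\mathrm{L}}(\psh(\cC),\cD) \xrightarrow{\ \sim\ } \fun(\cC,\cD), \qquad \fun^{\mathrm{L}}(\psh_\lex(\cC),\cD) \xrightarrow{\ \sim\ } \fun_{W}^{\mathrm{L}}(\psh(\cC),\cD),
\]
valid for any pointed cocomplete $\cD$: the first is the defining universal property of presheaves (free cocompletion), and the second is the universal property of localisation, identifying colimit-preserving functors out of $\psh_\lex(\cC)$ with colimit-preserving functors out of $\psh(\cC)$ that send the localising set $W$ to equivalences. Composing, $\yo^*$ identifies $\fun^{\mathrm{L}}(\psh_\lex(\cC),\cD)$ with the full subcategory of $\fun(\cC,\cD)$ consisting of those functors $f \colon \cC \to \cD$ whose colimit-preserving extension $f_! \colon \psh(\cC) \to \cD$ inverts $W$. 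So everything reduces to the following identification: $f_!$ inverts $W$ if and only if $f$ is exact. For this, unwind what $f_!$ does to the generators of $W$: $f_!$ preserves colimits and sends $\yo(A)$ to $f(A)$, so $f_!\big(\colim(\yo(A') \to \yo(A) \to \yo(A''))\big) \simeq \colim(f(A') \to f(A) \to f(A''))$, and $f_!(\yo(0)) \simeq f(0)$. Hence $f_!$ inverts $W$ exactly when $f(0) \simeq 0$ and each conflation in $\cC$ is sent by $f$ to a pushout–pullback square (cofibre sequence) in $\cD$, which is precisely the condition that $f$ be exact, i.e. $f \in \fun^{\mathrm{ex}}(\cC,\cD)$.

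I anticipate two places that require genuine care rather than formal manipulation. The first, and main obstacle, is pinning down the correct set $W$ of morphisms to invert so that the Bousfield localisation of $\psh(\cC)$ at $W$ has local objects exactly the left exact presheaves as defined in the statement, and simultaneously so that the extension $f_!$ of an exact functor provably inverts $W$ — one must be careful that "sends conflations to cofibre sequences" is the $W$-local condition and not some a priori weaker or stronger one, and that $\cC$ being merely pointed (not stable) does not cause a mismatch between fibre and cofibre sequences; here I would lean on the characterisation of exact functors and conflations from the setup of exact $\infty$-categories recalled earlier, and on the fact that a reflective localisation is determined by its local objects. The second is the claim that $\psh_\lex(\cC)$ is Grothendieck prestable and that $\cC \hookrightarrow \psh_\lex(\cC)$ is extension-closed: for the former I would verify that $W$ is generated under colimits by maps between $0$-connective (indeed $0$-truncated-cofibre) objects so that the localisation is left exact and preserves filtered colimits; for the latter, fully faithfulness of the composite $\cC \to \psh(\cC) \to \psh_\lex(\cC)$ follows because the representables $\yo(A)$ are already $W$-local (a representable presheaf is left exact precisely because $\Map_\cC(-, A)$ sends conflations to fibre sequences, which is part of the axioms of an exact $\infty$-category), and extension-closedness then amounts to checking that a conflation in $\psh_\lex(\cC)$ with outer terms in (the image of) $\cC$ has middle term in $\cC$, which one extracts from the same local-object description together with the reflection $L$ being exact.
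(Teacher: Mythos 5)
Your overall strategy for the universal property --- free cocompletion of $\psh(\cC)$ followed by the universal property of a Bousfield localisation at a set $W$ of comparison maps --- is essentially the route the paper takes (it runs the argument through $\psh_\Sigma(\cC)\simeq\Fun^\oplus(\cC^\op,\Sp_{\geq 0})$ first, which you should too, since $\psh(\cC)$ itself is a topos and not prestable; prestability of the additive presheaf category is where additivity of $\cC$ enters). But three substantive claims in your plan are asserted rather than proved, and for two of them the justification you offer would not go through. First, left exactness of the localisation $L$ is the crux of the whole theorem, and it is \emph{not} implied by $W$ consisting of maps between connective (or $0$-truncated-cofibre) objects --- in a prestable category every object is connective, so that criterion is vacuous, and accessible localisations of Grothendieck prestable categories at arbitrary sets of such maps need not be left exact. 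The paper's proof is a genuine computation: for a conflation $x\rightarrowtail y\twoheadrightarrow z$ the comparison map $\kappa(i)\colon\cofib(\yo_\Sigma(i))\to\yo_\Sigma(z)$ is $0$-coconnected and identifies on $\pi_0$ with the sieve generated by the egressive $y\twoheadrightarrow z$; since egressives are closed under pullback they form a quasi-topology, $L$ is identified with additive sheafification for that topology, and sheafification is left exact. Without some substitute for this identification you have no access to left exactness, hence no Grothendieck prestability.

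Second, your treatment of extension-closedness ("which one extracts from the same local-object description together with the reflection $L$ being exact") is not an argument. The paper needs the description of $\ker(L)$ as the weakly $\cC_\egr$-effaceable presheaves (obtained from the explicit sheafification formula on homotopy presheaves), from which it deduces the key lemma: for any cofibre sequence $X\to Y\to\yo(z)$ in $\psh_\lex(\cC)$ some egressive $p\colon y\twoheadrightarrow z$ lifts through $Y$; only then does prestability give $Y\simeq\yo(x\cup_{x'}y)$ for a cofibre sequence with representable outer terms. Third, your final identification "$f_!$ inverts $W$ iff $f$ is exact" silently conflates "preserves finite coproducts and sends conflations to cofibre sequences" with "exact": exactness in Barwick's sense requires preserving \emph{all} pushouts along ingressives, not just those computing cofibres of conflations. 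This implication is true but not formal --- the paper proves it only after constructing $\yo_\lex$ and establishing that it is itself exact, by factoring $f$ as $F\circ\yo_\lex$ with $F$ colimit-preserving. You flag the first and third points as requiring care, but the plan as written contains no mechanism to resolve them.
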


This prompts the following definition.

\begin{defn}
    If $\mathcal{C}$ is an exact $\infty$-category, then the \emph{presentable stable envelope} of $\mathcal{C}$ is defined as
    \[
        \psh_\lex^\st(\cC) := \Sp(\psh_\lex(\cC)).
    \]
\end{defn}

The presentable stable envelope is the initial presentable stable $\infty$-category with an exact functor from $\mathcal{C}$.
This generalises Klemenc's construction as we show that $\Ind(\derb(\cC)) \simeq \psh_\lex^\st(\cC)$, see \cref{cor:ind-derb-is-plex}.

We then proceed to discuss the multiplicative properties of the Gabriel--Quillen embedding and the stable envelope functor in \cref{sec:monoidal}, where we prove the following.

\begin{theorem}[\cref{cor:exact-symm-monoidal}]
    The $\infty$-category $\exact$ of small exact $\infty$-categories and exact functors admits a symmetric monoidal structure with unit $\mathrm{Proj}(\mathbb{S})$, the additive $\infty$-category of finitely generated projective $\bbS$-modules. Moreover, the functors 
    \[
        \psh_\lex \colon \exact \to \Groth \quad\text{and}\quad \psh_\lex^\st \colon \exact \to \PrL_\st
    \]
    refine to symmetric monoidal functors.
\end{theorem}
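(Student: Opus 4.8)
The plan is to construct a symmetric monoidal structure on $\exact$ via a universal property for biexact functors, to read off from that universal property that the unit is $\mathrm{Proj}(\bbS)$, and then to deduce that $\psh_\lex$ and $\psh_\lex^\st$ are symmetric monoidal by matching universal properties against the symmetric monoidal structures on $\Groth$ and $\PrL_\st$. The slogan is that an exact functor out of a tensor product $\cC\otimes\cD$ should be precisely a functor $\cC\times\cD\to\cE$ that is exact in each variable separately, and once the monoidal structure has been built, essentially everything else is bookkeeping around this slogan.

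\emph{Building the tensor product.} First I would show that for small exact $\infty$-categories $\cC,\cD$ the functor category $\fun^{\mathrm{ex}}(\cC,\cD)$, with the objectwise additive and exact structure, is again an exact $\infty$-category; this gives a natural equivalence between functors $\cC\times\cD\to\cE$ exact in each variable (``biexact'', notation $\fun^{\biex}$) and exact functors $\cC\to\fun^{\mathrm{ex}}(\cD,\cE)$. One records that $\exact$ is presentable and that $\fun^{\mathrm{ex}}(\cD,-)\colon\exact\to\exact$ preserves limits and is accessible; the adjoint functor theorem then produces a left adjoint $-\otimes\cD$, hence a bifunctor $\otimes\colon\exact\times\exact\to\exact$ with $\fun^{\mathrm{ex}}(\cC\otimes\cD,\cE)\simeq\fun^{\biex}(\cC\times\cD,\cE)\simeq\fun^{\mathrm{ex}}(\cC,\fun^{\mathrm{ex}}(\cD,\cE))$. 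The real work — and the step I expect to be the main obstacle — is to promote this bifunctor to a genuine symmetric monoidal structure, i.e.\ to produce all higher coherences simultaneously. I would do this by assembling an $\infty$-operad over $\exact$ whose multimorphism spaces are the multi-exact functors and checking that multilinear maps are corepresentable (iterating the previous step), which exhibits $\exact$ as a closed symmetric monoidal $\infty$-category; alternatively, one realises $\exact$ as an accessible localisation $L$ of a Day-convolution symmetric monoidal $\infty$-category of presheaves (on a small full subcategory of $\exact$ stable under $\otimes$, or on a pattern presenting exact $\infty$-categories), verifies that $L$ is compatible with the Day tensor, and transports the structure. Care is needed here because exact $\infty$-categories are \emph{not} cocomplete for any fixed class of diagram shapes — the relevant ``pushout along an ingressive'' squares form a restricted, marked class of diagrams — so one cannot simply invoke the standard machinery producing $\mathcal{K}$-indexed tensor products, and the corepresentability (resp.\ localisation-compatibility) must be established directly.

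\emph{The unit.} Since $\mathrm{Proj}(\bbS)$ is the free additive $\infty$-category on a single object, and additive functors preserve the split exact structure, one has $\fun^{\mathrm{ex}}(\mathrm{Proj}(\bbS),\cE)\simeq\cE$ by evaluation at $\bbS$, naturally in $\cE\in\exact$. Feeding this into the universal property of $\otimes$ gives $\fun^{\mathrm{ex}}(\mathrm{Proj}(\bbS)\otimes\cD,\cE)\simeq\fun^{\mathrm{ex}}(\cD,\cE)$, whence $\mathrm{Proj}(\bbS)\otimes\cD\simeq\cD$, so $\mathrm{Proj}(\bbS)$ is the monoidal unit.

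\emph{Monoidality of the two functors.} For $\psh_\lex$, I would use the Gabriel--Quillen equivalence $\fun^{\mathrm{L}}(\psh_\lex(\cC),\cD)\simeq\fun^{\mathrm{ex}}(\cC,\cD)$ of \cref{thm:gabriel-quillen}, together with the facts that $\Groth$ is closed symmetric monoidal with unit $\Sp_{\geq0}\simeq\psh_\lex(\mathrm{Proj}(\bbS))$ and with a bilinear-maps universal property (Lurie, \emph{Spectral Algebraic Geometry}, Appendix~C), and that $\fun^{\mathrm{ex}}(\cD,\cE)\simeq\fun^{\mathrm{L}}(\psh_\lex(\cD),\cE)$ is again Grothendieck prestable for $\cE\in\Groth$. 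Then the chain
\[
\begin{aligned}
\fun^{\mathrm{L}}\bigl(\psh_\lex(\cC)\otimes_{\Groth}\psh_\lex(\cD),\cE\bigr)
&\simeq\fun^{\mathrm{L}}\bigl(\psh_\lex(\cC),\fun^{\mathrm{L}}(\psh_\lex(\cD),\cE)\bigr)
\simeq\fun^{\mathrm{ex}}\bigl(\cC,\fun^{\mathrm{ex}}(\cD,\cE)\bigr)\\
&\simeq\fun^{\mathrm{ex}}(\cC\otimes\cD,\cE)
\simeq\fun^{\mathrm{L}}\bigl(\psh_\lex(\cC\otimes\cD),\cE\bigr),
\end{aligned}
\]
natural in $\cE\in\Groth$, gives $\psh_\lex(\cC)\otimes_{\Groth}\psh_\lex(\cD)\simeq\psh_\lex(\cC\otimes\cD)$; carrying out the same bookkeeping at the level of the $\infty$-operads (resp.\ commutative-algebra objects) encoding these monoidal structures, rather than just on underlying objects, upgrades $\psh_\lex$ to a symmetric monoidal functor, and the unit computation above matches the units. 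Finally $\psh_\lex^\st=\Sp\circ\psh_\lex$ and stabilisation $\Sp\colon\Groth\to\PrL_\st$ is symmetric monoidal with $\Sp(\Sp_{\geq0})\simeq\Sp$, so $\psh_\lex^\st$ is symmetric monoidal as well; alternatively one invokes the identification $\psh_\lex^\st\simeq\Ind\circ\derb$ of \cref{cor:ind-derb-is-plex} together with symmetric monoidality of $\Ind\colon\catperf\to\PrL_\st$, after checking that $\derb\colon\exact\to\catst$ is symmetric monoidal by the same universal-property comparison against Lurie's tensor product of stable $\infty$-categories.
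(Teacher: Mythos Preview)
Your outline is broadly correct and close in spirit to the paper's argument, but it diverges in two places, and in one of them your proposal is genuinely underspecified.

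\textbf{Construction of the tensor product.} The paper also defines $\exact^\otimes$ as the suboperad of the cartesian operad $\pair^\times$ whose multimorphisms are the multiexact functors, and reduces the question to corepresentability of biexact functors. The difference is that the paper does \emph{not} appeal to the adjoint functor theorem: it gives an explicit model for $\cC\otimes\cD$ as the smallest extension-closed subcategory of $\psh_\lex(\cC)\otimes_{\Groth}\psh_\lex(\cD)$ containing the image of $\cC\times\cD$, and proves directly that this has the required universal property (and that $\psh_\lex(\cC\otimes\cD)\simeq\psh_\lex(\cC)\otimes\psh_\lex(\cD)$). This avoids having to check that $\exact$ is presentable---a fact you assert without justification---and, more importantly, yields a concrete description used later (for instance to show that tensoring preserves left special inclusions).

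\textbf{Coherence for the symmetric monoidality of $\psh_\lex$.} This is where your sketch has a real gap. Your chain of equivalences shows that $\psh_\lex(\cC\otimes\cD)$ and $\psh_\lex(\cC)\otimes_{\Groth}\psh_\lex(\cD)$ corepresent the same functor, hence are equivalent; but ``carrying out the same bookkeeping at the level of the $\infty$-operads'' is exactly the hard part, and you do not indicate any mechanism for producing the coherent data. The paper solves this by going through $\derb_{\geq 0}$: the inclusion $\catprst\hookrightarrow\exact$ is a full suboperad inclusion, hence lax symmetric monoidal, so its left adjoint $\derb_{\geq 0}$ is automatically \emph{oplax} symmetric monoidal (by \cite{hhln:lax-adjunctions} in the paper's notation); one then checks that the oplax structure maps are equivalences by comparing universal properties, which is now a condition rather than a structure. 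Composing with the known symmetric monoidal functor $\Ind\colon\catrex\to\PrL$ and the identification $\psh_\lex\simeq\Ind\circ\derb_{\geq 0}$ gives $\psh_\lex$ its symmetric monoidal refinement. Your ``alternative'' route via $\Ind\circ\derb$ is essentially this argument, but you still phrase the monoidality of $\derb$ as ``the same universal-property comparison'', which inherits the same coherence gap; the missing ingredient is the oplax structure coming from the adjunction.

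In short: your unit computation and your identification of the underlying tensor product are fine and match the paper, but the paper's explicit construction of $\cC\otimes\cD$ buys both a cleaner corepresentability proof and later applications, and the adjunction-then-check-equivalences pattern for $\derb_{\geq 0}$ is what actually supplies the higher coherences you gesture at.
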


In \cref{sec:synthetic}, we focus on the $\infty$-category $\Syn_{E}$ of $E$-based synthetic spectra for an Adams-type homotopy associative ring spectrum $E$ introduced by Pstr\k{a}gowski in \cite{pstragowski:synthetic}. The $\infty$-category of synthetic spectra acts as a $1$-parameter deformation of the $\infty$-category of spectra, with respect to the $E$-based Adams spectral sequence. This $\infty$-category is originally defined as the category of additive sheaves on 
the site $\sp^{\mathrm{fp}}_{E}$ of compact spectra whose $E$-homology is finitely generated and projective as an $E_{*}$-module.
We prove that this has an interpretation in terms of the presentable stable envelope of an exact $\infty$-category.

\begin{theorem}[\Cref{syn:E-based-synthetic-exact}]
    The $\infty$-category $\sp^{\mathrm{fp}}_{E}$ of $E$-finite projective spectra admits an exact structure where a sequence
    $
        X \to Y \to Z
    $
    is exact if and only if the associated sequence on $E$-homology is a short exact sequence. Moreover, there is an equivalence 
    \[
        \Syn_{E} \simeq \psh_{\lex}^{\st}(\sp^{\mathrm{fp}}_{E})
    \]
    between the $\infty$-category of $E$-based synthetic spectra and the presentable stable envelope of
    this exact $\infty$-category.
\end{theorem}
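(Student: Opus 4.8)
The plan is to first equip $\sp^{\mathrm{fp}}_{E}$ with the asserted exact structure and then to recognise both $\Syn_{E}$ and $\psh_{\lex}^{\st}(\sp^{\mathrm{fp}}_{E})$ as the same accessible localisation of a presheaf category. For the exact structure: since $E$ is of Adams type, $\sp^{\mathrm{fp}}_{E}\subseteq\Sp$ is closed under finite direct sums and hence is an additive $\infty$-category, and I declare a morphism $i\colon X\to Y$ \emph{ingressive} if $E_{*}i$ is injective with finitely generated projective cokernel --- equivalently, if the cofibre $\cofib(i)$ computed in $\Sp$ again lies in $\sp^{\mathrm{fp}}_{E}$ and the long exact $E$-homology sequence of $X\to Y\to\cofib(i)$ breaks into a short exact sequence --- and dually declare $p\colon Y\to Z$ \emph{egressive} if $E_{*}p$ is surjective and $\fib(p)\in\sp^{\mathrm{fp}}_{E}$. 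Barwick's axioms are then verified by a routine computation: the pushouts of ingressives along arbitrary maps and the pullbacks of egressives along arbitrary maps are formed in $\Sp$ and shown to stay inside $\sp^{\mathrm{fp}}_{E}$ by chasing the long exact $E$-homology sequence --- the relevant connecting maps vanish because those for the given (co)fibre sequence do, and since each quotient occurring is projective the resulting short exact sequences split, keeping all the homology modules finitely generated projective; the objects involved remain compact as finite (co)limits of compact spectra, and the squares are bicartesian because they are so in $\Sp$. By construction a cofibre sequence $X\to Y\to Z$ in $\Sp$ whose three terms lie in $\sp^{\mathrm{fp}}_{E}$ is exact precisely when it induces a short exact sequence on $E$-homology.

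For the equivalence, recall from \cref{thm:gabriel-quillen} that $\psh_{\lex}(\sp^{\mathrm{fp}}_{E})$ is the full subcategory of $\psh(\sp^{\mathrm{fp}}_{E})$ spanned by the presheaves $F$ that preserve finite products and send every exact sequence $W\rightarrowtail X\twoheadrightarrow Z$ to a fibre sequence, i.e. with $F(Z)\xrightarrow{\sim}\fib(F(X)\to F(W))$; and recall that Pstr\k{a}gowski defines $\Syn_{E}$ as the full subcategory of finite-product-preserving presheaves that are sheaves for the Adams topology, i.e. with $F(Z)\xrightarrow{\sim}\lim_{\Delta}F(\check C(p)_{\bullet})$ for every $E_{*}$-epimorphism $p\colon X\to Z$. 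These two subcategories coincide: the exact sequences of $\sp^{\mathrm{fp}}_{E}$ are exactly the sequences $\fib(p)\to X\xrightarrow{p}Z$ with $p$ an $E_{*}$-epimorphism, and since $\Sp$ is stable the Čech nerve of such a $p$ splits levelwise as $\check C(p)_{n}\simeq X\oplus\fib(p)^{\oplus n}$, so for a finite-product-preserving $F$ the cosimplicial object $F(\check C(p)_{\bullet})$ is the cobar construction on $F(X)\to F(\fib(p))$, whose totalisation is $\fib(F(X)\to F(\fib(p)))$; hence Adams descent for a finite-product-preserving $F$ is equivalent to left exactness. Passing to spectrum objects and invoking $\psh_{\lex}^{\st}=\Sp(\psh_{\lex})$ identifies $\psh_{\lex}^{\st}(\sp^{\mathrm{fp}}_{E})$ with $\Syn_{E}$; if synthetic spectra are modelled by anima-valued presheaves the comparison first gives the connective statement, which one then stabilises.

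The substantive part of the argument is the construction of the exact structure: one must check that all the pushouts and pullbacks required by Barwick's axioms, though formed in the ambient stable category $\Sp$, never leave $\sp^{\mathrm{fp}}_{E}$, and the mechanism throughout is that a short exact sequence of $E_{*}$-modules with projective last term splits --- so that extensions of finitely generated projectives by finitely generated projectives are again finitely generated projective, the relevant connecting homomorphisms vanish, and $E$-homology detects which cofibre sequences are the exact ones. The second step is largely bookkeeping, its only genuine content being the levelwise splitting of the Čech nerve of an Adams cover and the attendant cobar computation; the one place demanding care is to match the precise shape of Pstr\k{a}gowski's definition of $\Syn_{E}$ --- values in anima versus spectra, and ordinary versus hypercomplete sheaves, neither of which changes the answer here --- with the left-exactness condition of \cref{thm:gabriel-quillen}.
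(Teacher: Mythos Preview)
Your argument is correct and reaches the same conclusion, but both halves proceed along somewhat different lines than the paper.

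For the exact structure, the paper does not chase long exact sequences directly. Instead it observes that every object of $\sp^{\mathrm{fp}}_{E}$ is dualisable (this uses the Adams-type hypothesis via \cite[Lemma~3.21]{pstragowski:synthetic}) and that Spanier--Whitehead duality exchanges $E_*$-injections with $E_*$-surjections. Thus the existence of pushouts along ingressives is reduced to the existence of pullbacks along egressives, which is immediate from the long exact sequence and projectivity. Your direct verification works too---the point being that the boundary map in the pushed-out cofibre sequence factors through the boundary map of the original one, which vanishes---but the duality trick is shorter and avoids having to handle both variances separately.

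For the equivalence, the paper does not recompute the \v{C}ech totalisation. It simply quotes Pstr\k{a}gowski's recognition theorem \cite[Theorem~2.8]{pstragowski:synthetic}, which already identifies additive sheaves for the $E_*$-epimorphism topology with presheaves sending covering fibre sequences to fibre sequences; together with the identification $\psh_\lex(\cC)=\shv_\Sigma(\cC)$ established in the proof of \cref{thm:plex}, this gives $\Syn_E\simeq\psh^\st_\lex(\sp^{\mathrm{fp}}_E)$ at once. Your cobar argument is essentially a sketch of Pstr\k{a}gowski's proof, so your version is more self-contained while the paper's is more economical. The paper additionally upgrades the equivalence to a symmetric monoidal one by invoking the universal property of $\psh^\st_\lex$ from \cref{cor:exact-symm-monoidal}, which goes slightly beyond the statement you were asked to prove.
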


\begin{rem}
    The authors believe that the relationship between ``synthetic''-type categories and exact categories is stronger than shown in this paper. This may be further explored in the future.
\end{rem}

In \cref{sec:delooping}, we define a notion of \emph{localising invariant} for exact $\infty$-categories and prove that every space-valued localising invariant deloops uniquely to a spectrum-valued localising invariant, generalising a theorem of Cisinski and Khan for localising invariants of stable $\infty$-categories \cite{ck:a1-invariance}.

In particular, this allows us to define a non-connective algebraic K-theory functor $\bbK$ which enjoys the following universal property.

\begin{prop}[\cref{cor:nc-K-universal-property}]
    Non-connective algebraic K-theory is the initial localising, spectrum-valued invariant under the (suspension spectrum of the) groupoid core.
\end{prop}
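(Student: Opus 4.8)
The strategy is to combine the delooping theorem of \cref{sec:delooping} with the universal property of \emph{connective} algebraic $K$-theory, transporting the problem to space-valued invariants and then ``localising-ifying'' connective $K$-theory.

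First I would reduce to a statement about space-valued invariants. Write $\iota\colon\exact\to\Spc$, $\iota(\cC):=\cC^{\simeq}$, for the groupoid-core functor, and let $F$ be an arbitrary spectrum-valued localising invariant. The delooping theorem exhibits $\Omega^{\infty}$ as an equivalence between spectrum-valued and space-valued localising invariants, so $\Map(\bbK,F)\simeq\Map(\Omega^{\infty}\bbK,\Omega^{\infty}F)$; and the objectwise adjunction $\Sigma^{\infty}_{+}\dashv\Omega^{\infty}$ gives $\Map(\iota,\Omega^{\infty}F)\simeq\Map(\Sigma^{\infty}_{+}\iota,F)$. It therefore suffices to prove that $\iota\to\Omega^{\infty}\bbK$ is the initial space-valued localising invariant under $\iota$: granting this, $\Map(\Omega^{\infty}\bbK,\Omega^{\infty}F)\simeq\Map(\iota,\Omega^{\infty}F)$ for every such $F$ (note $\Omega^{\infty}F$ is again a space-valued localising invariant, since $\Omega^{\infty}$ preserves filtered colimits and fibre sequences), and splicing the three equivalences --- while checking that the composite is restriction along the canonical map $\Sigma^{\infty}_{+}\iota\to\bbK$ --- exhibits $\bbK$ as the initial spectrum-valued localising invariant under $\Sigma^{\infty}_{+}\iota$.

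To prove the space-valued statement I would bring in connective $K$-theory $\K$. Note first that every localising invariant is additive: a split exact sequence of exact $\infty$-categories is in particular an exact sequence, so a functor sending exact sequences to fibre sequences sends split ones to (split) fibre sequences, which is additivity. By Barwick's universal property of algebraic $K$-theory --- applied to exact $\infty$-categories, regarded as Waldhausen $\infty$-categories via their admissible monomorphisms, using that this recovers the $K$-theory of the exact structure --- connective $K$-theory is the additivization of $\iota$; equivalently, $\iota\to\K$ is the initial \emph{additive} invariant under $\iota$, so restriction along it yields $\Map^{\add}(\K,E)\xrightarrow{\ \sim\ }\Map(\iota,E)$ for every additive invariant $E$. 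By construction (\cref{sec:delooping}), $\bbK$ is the delooping of $L_{\loc}\K$, where $L_{\loc}$ is the left adjoint to the inclusion of space-valued localising invariants into space-valued additive invariants (which exists by standard presentability considerations), and the reference map $\iota\to\Omega^{\infty}\bbK$ is the composite $\iota\to\K\to L_{\loc}\K$. Hence, for any localising --- thus additive --- invariant $E$,
\[
    \Map(\Omega^{\infty}\bbK,E)\;\simeq\;\Map^{\add}(L_{\loc}\K,E)\;\simeq\;\Map^{\add}(\K,E)\;\simeq\;\Map(\iota,E),
\]
the composite being restriction along $\iota\to\Omega^{\infty}\bbK$. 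This is exactly the initiality statement from the previous paragraph, completing the argument.

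The main obstacle I anticipate is foundational and twofold. First, one must pin down the universal property of connective $K$-theory in precisely this form: Barwick's formalism is developed for Waldhausen $\infty$-categories and with a specific packaging of additivity, so one has to check that $\exact$ embeds into it compatibly and that the notion of additive invariant matching the localising-ification $L_{\loc}$ agrees with Barwick's notion of additive theory --- here the additivity theorem does the work, and also guarantees that space-valued and connective-spectrum-valued additive invariants coincide. Second, and more delicate, one must reconcile the two descriptions of $\bbK$: the delooping theorem outputs an honestly spectrum-valued invariant, while $L_{\loc}$ is most naturally formed in the space- (or connective-spectrum-) valued setting, so one needs $\Omega^{\infty}\bbK\simeq L_{\loc}\K$ to hold on the nose, together with compatibility of the reference maps from $\iota$ and of the canonical comparison $\K\to\bbK$; one also has to settle the size issues behind the existence of $L_{\loc}$ (finitariness of invariants and a small generating subcategory of $\exact$).
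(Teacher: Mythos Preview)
Your strategy is the paper's strategy: reduce via the delooping theorem to the connective level, invoke the universal property of connective $\K$ among additive invariants, and use that localising implies additive. The paper's proof is the single chain
\[
 \Nat(\bbK,F)\;\simeq\;\Nat(\K,\tau_{\geq 0}F)\;\simeq\;\Nat(\Sigma^\infty\iota,\tau_{\geq 0}F)\;\simeq\;\Nat(\Sigma^\infty\iota,F),
\]
the first equivalence being \cref{thm:delooping} (together with $\tau_{\geq 0}\bbK \simeq \K\circ\idem{(-)}$ by definition and invariance of localising functors under idempotent completion), the second the universality of $\K$ for additive targets, and the third the adjunction $\Sigma^\infty\iota$ being connective.

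The one genuine difference is your detour through a localisation functor $L_{\loc}$ from additive to localising invariants and the identification $\Omega^\infty\bbK\simeq L_{\loc}\K$. This is not needed, and it is exactly the step that generates the foundational worries in your last paragraph. The delooping theorem already hands you $\tau_{\geq 0}\bbK$ (equivalently $\Omega^\infty\bbK$) as a specific localising invariant---namely $\K\circ\idem{(-)}$---so you can map out of it directly using Barwick's universal property; there is no need to realise it as $L_{\loc}$ of anything, and hence no need to construct $L_{\loc}$ or verify size hypotheses. A minor cosmetic point: working with $\tau_{\geq 0}$ into $\Sp_{\geq 0}$ rather than $\Omega^\infty$ into $\Spc$ lets you quote \cref{thm:delooping} verbatim and avoids the (easy, but extra) remark that additive space-valued invariants lift to connective spectra.
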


We also show that this non-connective algebraic K-theory functor coincides with the functors defined previously by Schlichting for exact 1-categories \cite{schlichting:delooping} and by Blumberg, Gepner and Tabuada for stable $\infty$-categories \cite{bgt:alg-k}.
Finally, we prove a version of the Gillet--Waldhausen theorem for non-connective algebraic K-theory of exact $\infty$-categories.

\begin{prop}[Gillet--Waldhausen, \cref{cor:gillet-waldhausen}]
    The unit transformation $\cC \to \derb(\cC)$ induces an equivalence
    \[ \bbK(\cC) \xrightarrow{\sim} \bbK(\derb(\cC)) \]
    for every exact $\infty$-category $\cC$.
\end{prop}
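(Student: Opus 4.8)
The plan is to deduce the non-connective statement from its connective shadow using the delooping formalism of \cref{sec:delooping}. I would first record the relevant facts. On $\catst$ the functor $\bbK$ restricts to the non-connective $\K$-theory of \cite{bgt:alg-k}, which is a localising invariant that is moreover invariant under idempotent completion; and $\derb$ restricts to the identity on $\catst$, since the inclusion $\catst\hookrightarrow\exact$, its right adjoint, is fully faithful, so the counit $\derb\circ\inc\Rightarrow\id$ is invertible. Because $\bbK$ is idempotent-invariant, $\Omega^\infty\bbK(-)$ is the space-valued localising invariant obtained from connective algebraic $\K$-theory by first passing to idempotent completions; and the connective Gillet--Waldhausen theorem --- that the unit $\cC\to\derb(\cC)$ is an equivalence on connective $\K$-theory, naturally in $\cC$ --- is due to Klemenc \cite{klemenc:stablehull} (it can also be extracted from the additivity theorem applied to the length filtration on Klemenc's model of $\derb(\cC)$). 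Hence the unit induces an equivalence $\Omega^\infty\bbK(\cC)\xrightarrow{\sim}\Omega^\infty\bbK(\derb(\cC))$ of space-valued localising invariants on $\exact$.

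The one genuinely non-formal step is to verify that $\cC\mapsto\bbK(\derb(\cC))$ is again a \emph{localising} invariant on $\exact$. By the first observation this reduces to showing that $\derb$ sends an exact sequence $\cA\to\cB\to\cB/\cA$ of small exact $\infty$-categories to a localisation sequence of stable $\infty$-categories. Since $\derb$ is a left adjoint it preserves the quotient, so $\derb(\cB/\cA)$ is the Verdier quotient of $\derb(\cB)$ by the image of $\derb(\cA)$; the remaining point is that $\derb(\cA)\to\derb(\cB)$ is fully faithful. For this I would pass to $\Ind$-completions: by \cref{cor:ind-derb-is-plex} this functor becomes the colimit-preserving extension $\psh_\lex^\st(\cA)\to\psh_\lex^\st(\cB)$, which by \cref{thm:gabriel-quillen} is $\Sp$ applied to the colimit-preserving extension $\psh_\lex(\cA)\to\psh_\lex(\cB)$ of the inclusion $\cA\hookrightarrow\cB$, and the latter is fully faithful because $\cA$ is extension-closed in $\cB$; since forming $\Ind$-completions reflects full-faithfulness, so is $\derb(\cA)\to\derb(\cB)$. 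I expect this full-faithfulness --- equivalently, that $\derb$ carries extension-closed inclusions to fully faithful functors of stable $\infty$-categories --- to be the main obstacle: it is not formal from $\derb$ being a left adjoint, and it is precisely the point where the Gabriel--Quillen embedding does work.

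Granting this, both $\bbK(-)$ and $\bbK(\derb(-))$ are spectrum-valued localising invariants on $\exact$, and the unit $\cC\to\derb(\cC)$ induces a natural transformation between them which, after applying $\Omega^\infty$, is the equivalence of space-valued localising invariants identified in the first paragraph. By the uniqueness clause of the delooping theorem of \cref{sec:delooping} --- a spectrum-valued localising invariant is functorially determined by the space-valued localising invariant underlying it --- the transformation $\bbK(-)\to\bbK(\derb(-))$ is itself an equivalence, which is the assertion. (A secondary, purely bookkeeping point is to reconcile the idempotent completions that appear when one unwinds ``$\Omega^\infty\bbK$ is connective $\K$-theory''; and a more hands-on alternative to this last step would replace the delooping-uniqueness argument by a d\'{e}vissage inside $\derb(\cC)$ along the length filtration, using that every localising invariant is additive, at the cost of essentially re-proving the connective theorem in the non-connective setting.)
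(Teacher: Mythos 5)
Your overall architecture coincides with the paper's proof: show that $\bbK\circ\derb$ is again a left localising invariant, invoke the connective Gillet--Waldhausen theorem, and conclude because $\tau_{\geq 0}$ is an equivalence (in particular conservative) on left localising functors by \cref{thm:delooping}. The paper obtains the first step from \cref{lem:derb-karoubi} together with \cref{rem:localising-idem}, and cites the connective input to \cite[Theorem~1.7]{sw:exact-stable} rather than to Klemenc, but these are the same moves.

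However, the step you yourself single out as the crux is justified incorrectly. You claim that the colimit-preserving extension $\psh_\lex(\cA)\to\psh_\lex(\cB)$, and hence $\derb(\cA)\to\derb(\cB)$, is fully faithful \emph{because $\cA$ is extension-closed in $\cB$}. That is false in general: by \cref{prop:special-fullyfaithful}, full faithfulness of $j_\natural$ is tied to $\cA$ being \emph{left special} in $\cB$ (equivalently, to $j^*$ preserving colimits), and the paper explicitly points out that $-\otimes\Sp^\omega\simeq\derb(-)$ does not preserve fully faithful functors, so extension-closedness alone cannot suffice. Your argument survives only because the definition of a left localising invariant quantifies exclusively over \emph{left special} inclusions $\cU\subseteq\cC$, for which \cref{rem:derb-special} supplies exactly the full faithfulness you need; you should invoke left specialness there, not extension-closedness. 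A second, smaller point: to obtain a genuine Karoubi sequence on stable envelopes one must idempotent-complete (a cofibre sequence in $\catst$ need only be a localisation up to idempotent completion), which is precisely what \cref{lem:derb-karoubi} records and what \cref{rem:localising-idem} then absorbs; the ``bookkeeping'' you defer is where this has to happen, so it is not entirely cosmetic.
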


This also implies that $\bbK$ refines to a lax symmetric monoidal functor, see \cref{cor:k-lax-monoidal}.

\addtocontents{toc}{\SkipTocEntry}
\subsection*{Conventions}
\begin{enumerate}
    \item From this point onwards, the word ``category'' means ``$\infty$-category''.
    \item We denote by $\Spc$ the category of anima/spaces/$\infty$-groupoids/weak homotopy types.
    \item For a small category $\cC$, we denote by $\psh(\cC) := \Fun(\cC^\op,\Spc)$ the category of presheaves on $\cC$ and let $\yo \colon \cC \to \psh(\cC)$ be the Yoneda embedding.
    \item We use the notions of \emph{Waldhausen categories} and \emph{exact categories} as defined in \cite{barwick:heart}.
     In particular, we refer to the distinguished morphisms in a Waldhausen category $\cC$ as \emph{ingressive} morphisms, and denote this subcategory by $\cC_\ing$.
     If $\cC$ is exact, we refer to the cofibres of ingressive morphisms as \emph{egressive} morphisms, and denote the subcategory of these by $\cC_\egr$.
\end{enumerate}

\addtocontents{toc}{\SkipTocEntry}
\subsection*{Acknowledgements}
We are indebted to Vova Sosnilo for explaining to us the proof of \cref{prop:sosnilo} and allowing us to include it in this article.
The authors are grateful to Clark Barwick and Erlend B{\o}rve for various helpful discussions and for helpful comments.
The authors also want to thank Drew Heard for helpful comments on an earlier draft of this paper.

MN is supported by grant number TMS2020TMT02 from
the Trond Mohn Foundation.
CW is supported by the CRC 1085 ``Higher Invariants'' funded by the Deutsche Forschungsgemeinschaft (DFG).

\section{The presentable Gabriel--Quillen embedding}\label{sec:gabriel-quillen}

\begin{defn}
 Let $\cC$ be a small category with finite coproducts.
 \begin{enumerate}
  \item A functor $X \colon \cC^\op \to \cX$ is \emph{additive} if it preserves finite products.
  \item Suppose that $\cC_\ing$ is a Waldhausen structure on $\cC$. An additive functor $X \colon \cC^\op \to \cX$ is \emph{left exact} if it sends exact sequences to fibre sequences.
  \end{enumerate}
 Whenever they are defined, denote by $\psh_\Sigma(\cC)$ and $\psh_\lex(\cC)$ the full subcategories of $\psh(\cC)$ spanned by the additive and left exact presheaves, respectively.
\end{defn}

The category of additive presheaves $\psh_\Sigma(\cC)$ on a small category with finite coproducts is an accessible localisation of $\psh(\cC)$ \cite[Proposition~5.5.8.10 (1)]{HTT}.
The Yoneda embedding factors over $\psh_\Sigma(\cC)$, and the induced functor $\yo_\Sigma \colon \cC \to \psh_\Sigma(\cC)$ preserves finite coproducts \cite[Proposition~5.5.8.10 (2)]{HTT}.
The inclusion functor $\psh_\Sigma(\cC) \to \psh(\cC)$ preserves sifted colimits \cite[Proposition~5.5.8.10 (4)]{HTT}, and the category $\cP_\Sigma(\cC)$ is projectively generated and presentable \cite[Proposition~5.5.8.25]{HTT}.

\begin{defn}\label{def:comparison-map}
 Let $i \colon x \rightarrowtail y$ be an ingressive morphism in a Waldhausen category $\cC$.
 Then $i$ gives rise to a comparison map
 \[ \kappa(i) \colon \cofib(\yo_\Sigma(i)) \to \yo_\Sigma(\cofib(i)) \]
 in $\psh_\Sigma(\cC)$.
 Define the \emph{elementary acyclic} associated to $i$ as
 \[ E(i) := \cofib(\kappa(i)). \]
\end{defn}

\begin{rem}\label{rem:lex-local}
 An additive presheaf on a Waldhausen category $\cC$ is left exact if and only if it is local with respect to the collection of morphisms
 \[ \{ \kappa(i) \mid i \in \cC_\ing \}. \]
\end{rem}

\begin{prop}\label{prop:pshlex-waldhausen}
 Let $\cC$ be a small Waldhausen category.
 Then the inclusion $\psh_\lex(\cC) \subseteq \psh_\Sigma(\cC)$ admits an accessible left adjoint $\LEx \colon \psh_\Sigma(\cC) \to \psh_\lex(\cC)$ which preserves compact objects.
 In particular, $\psh_\lex(\cC)$ is a compactly generated presentable category.
 
 The Yoneda embedding of $\cC$ factors through $\psh_\lex(\cC)$, and the resulting functor $\yo_\lex \colon \cC \to \psh_\lex(\cC)$ preserves finite coproducts and sends exact sequences to cofibre sequences.
\end{prop}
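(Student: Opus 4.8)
The plan is to exhibit $\psh_\lex(\cC)$ as an accessible localisation of $\psh_\Sigma(\cC)$ at a small set of maps between compact objects, and then to extract every claim from the standard theory of Bousfield localisation together with the explicit description of the local objects furnished by \cref{rem:lex-local}. Concretely, set $S := \{\kappa(i) \mid i \in \cC_\ing\}$; this is a small set since $\cC$ is small, and by \cref{rem:lex-local} its full subcategory of local objects in $\psh_\Sigma(\cC)$ is exactly $\psh_\lex(\cC)$. Recall that $\psh_\Sigma(\cC)$ is projectively, hence compactly, generated by the representables $\yo_\Sigma(c)$ \cite[Proposition~5.5.8.25]{HTT}, and that a Waldhausen category is pointed and has all finite coproducts (the coproduct $x \sqcup y$ being the pushout of the ingressives $0 \rightarrowtail x$ and $0 \rightarrowtail y$), so that $\psh_\Sigma(\cC)$ is pointed and cofibre sequences make sense there. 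Each $\kappa(i) \colon \cofib(\yo_\Sigma(i)) \to \yo_\Sigma(\cofib(i))$ is a morphism between compact objects, the target being representable and the source a finite colimit of representables. Localising at the small set $S$ thus yields an accessible left adjoint $\LEx \colon \psh_\Sigma(\cC) \to \psh_\lex(\cC)$ by \cite[Proposition~5.5.4.15]{HTT}.

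Because $S$ consists of maps between compact objects, the condition of being $S$-local is detected by mapping the compact objects appearing in the maps $\kappa(i)$ into a fixed object, so the class of $S$-local presheaves is closed under filtered colimits; equivalently, the inclusion $\psh_\lex(\cC) \hookrightarrow \psh_\Sigma(\cC)$ preserves filtered colimits. Hence for any compact $c \in \psh_\Sigma(\cC)$ the functor $\Map_{\psh_\lex(\cC)}(\LEx(c),-) \simeq \Map_{\psh_\Sigma(\cC)}(c,-)|_{\psh_\lex(\cC)}$ preserves filtered colimits, so $\LEx(c)$ is compact; and since $\LEx$ is a left adjoint, the compact objects $\LEx(\yo_\Sigma(c))$ generate $\psh_\lex(\cC)$, which is therefore compactly generated.

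For the statements about $\yo_\lex$, I would first observe that every representable is already left exact. Given an ingressive $i \colon x \rightarrowtail y$ with cofibre $z$ and any $c \in \cC$, the cofibre $z = y \cup_x 0$ is a genuine pushout in $\cC$, so $\Map_\cC(z,c) \simeq \fib(\Map_\cC(y,c) \to \Map_\cC(x,c))$; via the Yoneda lemma this exhibits $\kappa(i)^* \colon \Map_{\psh_\Sigma(\cC)}(\yo_\Sigma(z),\yo_\Sigma(c)) \to \Map_{\psh_\Sigma(\cC)}(\cofib(\yo_\Sigma(i)),\yo_\Sigma(c))$ as an equivalence, so $\yo_\Sigma(c)$ is $S$-local. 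Consequently $\yo_\Sigma$ factors through $\psh_\lex(\cC)$ as a functor $\yo_\lex$ sending $c$ to the representable $\yo_\Sigma(c)$. Preservation of finite coproducts follows from the corresponding property of $\yo_\Sigma$ \cite[Proposition~5.5.8.10]{HTT}, since a finite coproduct of representables is representable, hence left exact, hence computed identically in $\psh_\lex(\cC)$. For exactness, apply the colimit-preserving functor $\LEx$ to the cofibre sequence $\yo_\Sigma(x) \to \yo_\Sigma(y) \to \cofib(\yo_\Sigma(i))$ in $\psh_\Sigma(\cC)$: as $\LEx$ inverts $\kappa(i)$ we obtain $\LEx(\cofib(\yo_\Sigma(i))) \simeq \LEx(\yo_\Sigma(z)) = \yo_\lex(z)$, and since $\yo_\Sigma(x)$ and $\yo_\Sigma(y)$ are already left exact this gives the desired cofibre sequence $\yo_\lex(x) \to \yo_\lex(y) \to \yo_\lex(z)$.

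I do not expect a serious obstacle: the argument is essentially bookkeeping around \cref{rem:lex-local}. The single structural input is that cofibres in a Waldhausen category are honest pushouts, used to place representables inside $\psh_\lex(\cC)$, and the only point needing a little care is the stability of left-exact presheaves under filtered colimits, which is exactly what makes $\LEx$ preserve compact objects and rests on $S$ being a set of maps between compact objects.
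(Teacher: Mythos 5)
Your proposal is correct and follows essentially the same route as the paper: localise $\psh_\Sigma(\cC)$ at the set $\{\kappa(i)\}$ via \cite[Proposition~5.5.4.15]{HTT} using \cref{rem:lex-local}, deduce preservation of compact objects from the fact that the local objects are closed under filtered colimits, and check directly that representables are local and that $\LEx$ carries the cofibre sequence $\yo_\Sigma(x) \to \yo_\Sigma(y) \to \cofib(\yo_\Sigma(i))$ to the required one. The paper compresses the last two steps into "by inspection"; you have simply written out the same verification.
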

\begin{proof}
 Since $\cC$ is small, \cref{rem:lex-local} and \cite[Proposition~5.5.4.15]{HTT} imply that the inclusion functor $\psh_\lex(\cC) \to \psh_\Sigma(\cC)$ admits an accessible left adjoint.
 The localisation functor $\LEx$ preserves compact objects because the inclusion functor $\psh_\lex(\cC) \subseteq \psh_\Sigma(\cC)$ preserves filtered colimits.
 By inspection, the Yoneda embedding induces a fully faithful functor $\yo_\lex \colon \cC \to \psh_\lex(\cC)$ which preserves finite coproducts and sends exact sequences to cofibre sequences.
\end{proof}

It appears that little more can be said about $\yo_\lex$ for an arbitrary Waldhausen category $\cC$.
However, the situation improves greatly when $\cC$ is exact, as we are going to discuss in the remainder of this section.

The category $\psh_\Sigma(\cC)$ is prestable if and only if $\cC$ is additive \cite[Proposition~C.1.5.7]{SAG}.
In this case, $\psh_\Sigma(\cC)$ is a \emph{Grothendieck prestable category} in the sense that it is prestable and presentable, and filtered colimits commute with finite limits in $\psh_\Sigma(\cC)$ \cite[Definition~C.1.4.2]{SAG}.
Either as a consequence of the proof of \cite[Proposition~C.1.5.7]{SAG} or by applying \cite[Corollary~2.10]{GGN:multiplicative-loop-spaces}, the forgetful functor $\Omega^\infty \colon \Sp_{\geq 0} \to \Spc$ then induces an equivalence
\[ \Fun^\oplus(\cC^\op,\Sp_{\geq 0}) \xrightarrow{\simeq} \psh_\Sigma(\cC), \]
where the domain is the full subcategory of $\Fun(\cC^\op,\Sp_{\geq 0})$ spanned by those presheaves which preserve finite products.
Note that both limits and colimits in $\Fun^\oplus(\cC^\op,\Sp_{\geq 0})$ are computed pointwise.
In the sequel, we will tacitly identify $\psh_\Sigma(\cC)$ with the category of additive $\Sp_{\geq 0}$-valued presheaves.

Another way to phrase \cref{prop:pshlex-waldhausen} for an exact category $\cC$ is to say that we have a semi-orthogonal decomposition
\[\begin{tikzcd}
	\ker(\LEx)\ar[r, shift left=2]\ar[r, phantom, "\vdash" rotate=90] & \psh_\Sigma(\cC)\ar[r, shift left=2, "\LEx"]\ar[l, shift left=2, "\Acycl"]\ar[r, phantom, "\vdash" rotate=90] & \psh_\lex(\cC)\ar[l, shift left=2, "i_\cC"]
\end{tikzcd}\]
where
\[ \Acycl(X) \simeq \fib(X \to i_\cC\LEx(X)) \]
is the fibre of the unit transformation.
Since filtered colimits preserve finite limits in $\Sp_{\geq 0}$ and $i_\cC$ preserves filtered colimits, the same is true for $\Acycl$.
In the following, we will typically suppress explicit mention of the inclusion functors.

\begin{defn}
 Let $\cC$ be a category and let $S$ be a subcategory of $\cC$.
 \begin{enumerate}
   \item A functor $X \colon \cC^\op \to \Ab$ is \emph{weakly $S$-effaceable} if for every $x \in \cC$ and $\xi \in X(x)$ there exists some morphism $s \colon x' \to x$ in $S$ such that
   \[ s^*(\xi) = 0 \in X(x'). \]
 	\item A presheaf $X \colon \cC^\op \to \Sp_{\geq 0}$ is \emph{weakly $S$-effaceable} if $\pi_kX$ is weakly $S$-effaceable for every $k \in \bbN$. 
 \end{enumerate}
\end{defn}

This notion allows us to give a concise description of the objects $E(i)$.

\begin{lem}\label{lem:elementary-acyclics}
 Let $\cC$ be an exact category and let $X \colon \cC^\op \to \Sp_{\geq 0}$ be an additive presheaf.
 The following are equivalent:
 \begin{enumerate}
  \item $X$ is an elementary acyclic associated to some ingressive morphism;
  \item there exists an egressive morphism $p \colon y \twoheadrightarrow z$ such that
   \[ X \simeq \pi_0\cofib(\yo_\Sigma(p)) \simeq \coker( \pi_0\yo_\Sigma(y) \to \pi_0\yo_\Sigma(z)); \]
  \item there exists a morphism $f \colon y \to z$ in $\cC$ such that
   \[ X \simeq \pi_0\cofib(\yo_\Sigma(f)) \simeq \coker( \pi_0\yo_\Sigma(y) \to \pi_0\yo_\Sigma(z)) \]
   and $X$ is weakly $\cC_\egr$-effaceable.
 \end{enumerate}
\end{lem}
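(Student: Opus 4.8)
The strategy is to prove the cyclic chain $(1)\Rightarrow(2)\Rightarrow(3)\Rightarrow(1)$, the heart of which is a computation of $E(i)$. I claim that for an ingressive $i\colon x\rightarrowtail y$ with cofibre $p\colon y\twoheadrightarrow z$ there is a natural equivalence
\[ E(i)\ \simeq\ \coker\bigl(\pi_0\yo_\Sigma(y)\to\pi_0\yo_\Sigma(z)\bigr)\ \simeq\ \pi_0\cofib\bigl(\yo_\Sigma(p)\bigr); \]
in particular every elementary acyclic is $0$-truncated. To prove this, observe that the square witnessing $z\simeq\cofib(i)$ is, by the axioms of an exact category, also a pullback, so $x\simeq\fib(p)$; since the functors $\Map_\cC(c,-)\colon\cC\to\Sp_{\geq 0}$ preserve limits, $\yo_\Sigma(x)\to\yo_\Sigma(y)\xrightarrow{\yo_\Sigma(p)}\yo_\Sigma(z)$ is a fibre sequence in $\psh_\Sigma(\cC)=\Fun^\oplus(\cC^\op,\Sp_{\geq 0})$. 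As all (co)limits here are pointwise, I would now work in $\Sp$: the connectivity of $\yo_\Sigma(y),\yo_\Sigma(z)$ forces $F:=\fib_{\Sp}(\yo_\Sigma(p))$ to be $(-1)$-connective with $\pi_{-1}F=\coker(\pi_0\yo_\Sigma(y)\to\pi_0\yo_\Sigma(z))=:W$, and $\yo_\Sigma(x)\simeq\tau_{\geq 0}F$. Feeding the resulting factorisation $\yo_\Sigma(x)=\tau_{\geq 0}F\to F\to\yo_\Sigma(y)$ of $\yo_\Sigma(i)$ into the octahedral axiom, and using $\cofib(\tau_{\geq 0}F\to F)=\tau_{\leq -1}F=W[-1]$ together with $\cofib(F\to\yo_\Sigma(y))=\yo_\Sigma(z)$, produces a cofibre sequence
\[ W[-1]\ \to\ \cofib\bigl(\yo_\Sigma(i)\bigr)\ \to\ \yo_\Sigma(z). \]
The second map here is the comparison map $\kappa(i)$ of \cref{def:comparison-map}, because precomposing it with the canonical map $\yo_\Sigma(y)\to\cofib(\yo_\Sigma(i))$ recovers $\yo_\Sigma(p)$, which characterises $\kappa(i)$. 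Rotating identifies $E(i)=\cofib(\kappa(i))$ with $W$, and right-exactness of $\pi_0$ on $\yo_\Sigma(y)\to\yo_\Sigma(z)\to\cofib(\yo_\Sigma(p))$ rewrites $W$ as $\pi_0\cofib(\yo_\Sigma(p))$.

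Granting this, $(1)\Rightarrow(2)$ is immediate, as is its converse: by the standard bijective correspondence between ingressives and egressives in an exact category, an egressive $p\colon y\twoheadrightarrow z$ is the cofibre of the ingressive $\fib(p)\rightarrowtail y$, so $\pi_0\cofib(\yo_\Sigma(p))$ is elementary acyclic. For $(2)\Rightarrow(3)$ take $f:=p$; the cokernel description of $X$ is again right-exactness of $\pi_0$, and since $X\simeq\pi_0\cofib(\yo_\Sigma(p))$ is $0$-truncated it only remains to check that $\pi_0X=X$ is weakly $\cC_\egr$-effaceable. Given $c\in\cC$ and $\xi\in X(c)$ represented by $g\colon c\to z$, the pullback of the egressive $p$ along $g$ exists in $\cC$ and its projection $s\colon c\times_z y\to c$ is egressive; the other projection $c\times_z y\to y$ shows that $g\circ s$ factors through $p$, so $s^{*}\xi=[g\circ s]=0$.

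For $(3)\Rightarrow(1)$, write $X\simeq\coker\bigl(\pi_0\yo_\Sigma(y)\xrightarrow{f_{*}}\pi_0\yo_\Sigma(z)\bigr)$ with $X$ weakly $\cC_\egr$-effaceable. Applying effaceability to $\xi:=[\id_z]\in X(z)$ produces an egressive $s\colon z'\twoheadrightarrow z$ with $s^{*}\xi=[s]=0$ in $X(z')$; unwinding the cokernel, this means $s\simeq f\circ t$ for some $t\colon z'\to y$. Now $(f,s)\colon y\oplus z'\to z$ is egressive: it is the composite of the egressive $\id_y\oplus s$ with the split epimorphism $(f,\id_z)\colon y\oplus z\to z$, and egressives are stable under composition and direct sums and contain split epimorphisms. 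On the other hand the factorisation $s\simeq f\circ t$ forces $\img((f,s)_{*})=\img(f_{*})$ as sub-presheaves of $\pi_0\yo_\Sigma(z)$, hence $\coker((f,s)_{*})\simeq\coker(f_{*})\simeq X$, i.e.\ $X\simeq\pi_0\cofib(\yo_\Sigma((f,s)))$; by the key computation (applied to the ingressive $\fib((f,s))\rightarrowtail y\oplus z'$) this presheaf is elementary acyclic.

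I expect the main obstacle to be the key computation, precisely because $\psh_\Sigma(\cC)$ is only prestable: a fibre sequence there does not rotate to a cofibre sequence, so one is forced to pass to connective covers inside $\Sp$ and keep track of the truncation defect $W[-1]$, and one must check that the octahedral axiom really delivers the comparison map $\kappa(i)$ rather than some other map $\cofib(\yo_\Sigma(i))\to\yo_\Sigma(z)$. By contrast $(3)\Rightarrow(1)$ becomes short once one spots the device of absorbing the effacing egressive $s$ into $f$ via $(f,s)$ without altering the cokernel.
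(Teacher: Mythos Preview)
Your argument is correct. The paper itself does not give a proof of this lemma but defers to \cite[Proposition~3.7]{sw:exact-stable}, so there is no in-paper proof to compare against directly. However, the key computation you perform---that $E(i)$ is discrete and identifies with $\coker(\pi_0\yo_\Sigma(y)\to\pi_0\yo_\Sigma(z))$---is carried out independently in the proof of \cref{thm:plex}~\eqref{thm:plex-1}, by a slightly different route: there one observes that the upper row of the comparison diagram is a cofibre sequence while the lower row is a fibre sequence, concludes that $\kappa(i)$ is $0$-coconnected, and reads off $\pi_0E(i)$ directly from the $\pi_0$-surjectivity of $\yo_\Sigma(y)\to\cofib(\yo_\Sigma(i))$. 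Your octahedral approach reaches the same conclusion by passing to spectra and tracking the truncation defect $W[-1]$; this is a little longer but has the virtue of making the identification $E(i)\simeq W$ appear as a single rotation rather than as a separate connectivity estimate followed by a $\pi_0$-computation.

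One small point worth tightening: when you assert that the octahedral map $\cofib(\yo_\Sigma(i))\to\yo_\Sigma(z)$ coincides with $\kappa(i)$, the justification ``precomposing with $\yo_\Sigma(y)\to\cofib(\yo_\Sigma(i))$ recovers $\yo_\Sigma(p)$'' is not by itself sufficient in a stable category, since maps out of a cofibre are determined by a map together with a nullhomotopy. You do flag this in your closing paragraph, and indeed both maps are induced by the \emph{same} commutative square (the one witnessing $\yo_\Sigma(p)\circ\yo_\Sigma(i)\simeq 0$, which simultaneously produces the factorisation $\yo_\Sigma(x)\to F$), so they agree; it would be cleaner to say this explicitly rather than leave it as a caveat. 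The implications $(2)\Rightarrow(3)\Rightarrow(1)$ are handled well; the device of replacing $f$ by $(f,s)$ is exactly the standard move, and your verification that $(f,s)$ is egressive (via $(f,\id_z)\circ(\id_y\oplus s)$, using that split epimorphisms are egressive and that egressives are stable under pullback and composition) is correct.
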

\begin{proof}
 See \cite[Proposition~3.7]{sw:exact-stable} for a proof.
\end{proof}

\begin{theorem}\label{thm:plex}
 Let $\cC$ be a small exact category.
 \begin{enumerate}
  \item\label{thm:plex-1} The localisation functor $\LEx \colon \psh_\Sigma(\cC) \to \psh_\lex(\cC)$ preserves finite limits and $\psh_\lex(\cC)$ is Grothendieck prestable.
  \item\label{thm:plex-2} Every discrete, weakly $\cC_\egr$-effaceable additive presheaf lies in $\ker(\LEx)$.
  \item\label{thm:plex-3} Every object in $\ker(\LEx)$ is weakly $\cC_\egr$-effaceable.
  \item\label{thm:plex-4} The collection $\{ E(i) \}_{i \in \cC_\ing}$ of elementary acyclics generates $\ker(\LEx)$ under colimits and extensions.
   \item\label{thm:plex-5} For every pointed, cocomplete category $\cD$, the restriction functor
   \[ \yo_\lex^* \colon \Fun^{\Ldec}(\psh_\lex(\cC),\cD) \to \Funex{}(\cC,\cD) \]
   is an equivalence, where $\cD$ is equipped with the maximal Waldhausen structure.
   Moreover, a functor $\cC \to \cD$ is exact if and only if it preserves finite coproducts and sends exact sequences to cofibre sequences.
 \end{enumerate}
\end{theorem}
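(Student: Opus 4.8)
The plan is to identify $\ker(\LEx)$ precisely with the full subcategory $\cW\subseteq\psh_\Sigma(\cC)$ of weakly $\cC_\egr$-effaceable presheaves, and then to read off (1)--(5) from this together with the general theory of Grothendieck prestable categories. The decisive input is \cref{lem:elementary-acyclics}: each elementary acyclic $E(i)$ is discrete and weakly $\cC_\egr$-effaceable, and $E(i)=\cofib(\kappa(i))$ lies in $\ker(\LEx)$ because $\LEx(\kappa(i))$ is an equivalence by \cref{rem:lex-local}. I would first record that $\cW$ is closed under colimits and extensions in $\psh_\Sigma(\cC)$: working in $\psh_\Sigma(\cC)\simeq\Fun^\oplus(\cC^\op,\Sp_{\geq 0})$, homotopy groups are computed pointwise, commute with filtered colimits, and sit in a long exact sequence along any cofibre sequence of connective objects, while the weakly $\cC_\egr$-effaceable objects of the heart $\Fun^\oplus(\cC^\op,\Ab)$ form a Serre subcategory $\cA$; closure of $\cA$ under subobjects, quotients, and extensions is the usual Gabriel--Quillen bookkeeping, using that a finite family of elements can be effaced by a single egressive obtained as an iterated base change, since $\cC_\egr$ is closed under composition and pullback. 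Postnikov towers then show that $\cW$ is exactly the smallest subcategory of $\psh_\Sigma(\cC)$ that contains $\cA$ and is closed under colimits and extensions.

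Next I would prove (4), (3), (2). For (4): the closure of $\{E(i)\}$ under colimits and extensions is contained in $\ker(\LEx)$, since each $E(i)$ lies there and $\ker(\LEx)$ is closed under colimits and extensions (for $Y$ left exact, $\Map(-,Y)$ carries colimits to limits and cofibre sequences to fibre sequences); conversely, since $\psh_\lex(\cC)$ is reflective and hence closed under $\Omega$, the left exact presheaves are precisely the objects right-orthogonal to $\{E(i)\}$ — all mapping spectra involved are connective, so $E(i)$-locality is detected on mapping spaces — and the standard description of the kernel of an accessible localisation then identifies $\ker(\LEx)$ with that closure. Part (3) is then immediate, since the closure of $\{E(i)\}$ is contained in $\cW$. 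For (2), a morphism $E(p)\to X$ for an egressive $p\colon y\twoheadrightarrow z$ is the datum of an element $\xi\in X(z)$ with $p^{*}\xi=0$ (using $E(p)=\coker(\pi_{0}\yo_\Sigma(y)\to\pi_{0}\yo_\Sigma(z))$ from \cref{lem:elementary-acyclics}), so for $X$ discrete and weakly $\cC_\egr$-effaceable the map $\bigoplus_{\xi}E(p_{\xi})\to X$ is $\pi_{0}$-surjective; iterating and passing to the geometric realisation of the resulting bar-type simplicial object (Dold--Kan) exhibits $X$ as a colimit of objects of $\ker(\LEx)$, so $X\in\ker(\LEx)$. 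Combining (2) with the Postnikov argument and with (3) yields $\ker(\LEx)=\cW$, which is the colimit-and-extension closure of the Serre subcategory $\cA\subseteq\psh_\Sigma(\cC)^{\heartsuit}$.

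Part (1) then follows from \cite[Appendix~C]{SAG}: localising a Grothendieck prestable category at the colimit-and-extension closure of a Serre subcategory of its heart gives a left-exact accessible localisation whose target is again Grothendieck prestable, with heart the Serre quotient (which here recovers the classical category of left-exact abelian presheaves). Applied to $\psh_\Sigma(\cC)$ and $\cA$, this is exactly the statement that $\LEx$ preserves finite limits and $\psh_\lex(\cC)$ is Grothendieck prestable; more concretely, one may instead check by hand that $\cW$ is closed under $\Omega$ (one more pointwise homotopy-group computation) and under subobjects, which is the explicit form of the same criterion.

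Finally, for (5) I would stack two universal properties. Since $\cC$ has finite coproducts and a zero object, $\psh_\Sigma(\cC)$ is the free cocompletion of $\cC$ under colimits among finite-coproduct-preserving functors, so for any pointed cocomplete $\cD$ restriction along $\yo_\Sigma$ gives an equivalence $\Fun^{\mathrm{L}}(\psh_\Sigma(\cC),\cD)\xrightarrow{\sim}\Fun^\oplus(\cC,\cD)$ \cite[Proposition~5.5.8.15]{HTT}. By the universal property of the localisation $\LEx$, $\Fun^{\mathrm{L}}(\psh_\lex(\cC),\cD)$ is the full subcategory of functors inverting every $\kappa(i)$; and if $F\colon\cC\to\cD$ preserves finite coproducts with colimit-preserving extension $\bar F$, then $\bar F(\kappa(i))$ is the canonical comparison $\cofib(Fi)\to F(\cofib i)$ (as $\bar F$ preserves colimits and $\bar F\circ\yo_\Sigma=F$), so $\bar F$ inverts all $\kappa(i)$ iff $F$ sends exact sequences to cofibre sequences. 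This identifies $\Fun^{\mathrm{L}}(\psh_\lex(\cC),\cD)$ with the functors $\cC\to\cD$ preserving finite coproducts and sending exact sequences to cofibre sequences; unwinding the maximal Waldhausen structure on $\cD$ and the definition of exact functor following \cite{barwick:heart} — and using pushout-pasting to promote these two properties to preservation of pushouts along ingressives — shows this is exactly $\Funex{}(\cC,\cD)$, proving both claims in (5). The main obstacle is (1): left-exactness of $\LEx$ is a genuine condition in the prestable (unlike the stable) setting, and everything hinges on the exact identification $\ker(\LEx)=\cW$; once that is in place the Grothendieck-prestable machinery applies and the rest is formal. A secondary delicate point is the resolution in (2), where one must verify that the simplicial resolution by elementary acyclics really has geometric realisation $X$.
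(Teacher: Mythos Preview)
Your central claim that $\ker(\LEx)=\cW$ is false, and this breaks the argument for~(1). The paper itself records this in the remark immediately following the theorem: if $\cC$ is stable (more generally, whenever every morphism is egressive) then every presheaf is weakly $\cC_\egr$-effaceable---one restricts along $0\to x$---so $\cW=\psh_\Sigma(\cC)$, while $\ker(\LEx)$ is certainly proper since $\psh_\lex(\cC)$ contains the representables. The error is in your ``Postnikov argument'' for the equality $\cW=\overline{\cA}$ (closure of $\cA$ under colimits and extensions): the tower $\{\tau_{\leq n}X\}_n$ is an \emph{inverse} system, and recovering $X$ from it requires a limit, not a colimit. You do get each $\tau_{\leq n}X\in\overline{\cA}$ by iterated extensions, but there is no way to pass to $X$ inside a class closed only under colimits and extensions. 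Consequently the SAG machinery you invoke for~(1) is being applied to the wrong subcategory: localising $\psh_\Sigma(\cC)$ at $\overline{\cA}$ does give a left exact localisation, but it is not $\LEx$ in general.

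The paper proves~(1) by a completely different mechanism: the egressives form a Grothendieck quasi-topology on $\cC$, and one checks that for each ingressive $i$ the map $\kappa(i)$ is precisely the inclusion of the covering sieve associated to $\cofib(i)$. Hence additive sheaves coincide with left exact presheaves, and $\LEx$ is the restriction of sheafification---automatically left exact. This is the idea you are missing, and once~(1) is in hand the paper deduces~(2)--(4) from it (rather than the other way around): in particular~(3) uses that $\ker(\LEx)$ is a localising subcategory in the sense of \cite[C.2.3]{SAG}, which needs left exactness of $\LEx$ as input. Your proof of~(4) also has a smaller gap: in the prestable setting $\Map(E(i),Y)\simeq *$ only says that $\Map(\kappa(i),Y)$ has trivial fibre, not that it is an equivalence, so ``right-orthogonal to $\{E(i)\}$'' is a priori weaker than ``$\kappa(i)$-local''; the paper's argument for~(4) instead shows directly that every acyclic is a filtered colimit of truncated acyclics and handles those by induction. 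Your treatment of~(5) is essentially the same as the paper's.
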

\begin{proof}
 Since $\psh_\Sigma(\cC)$ is Grothendieck prestable, \cite[Proposition~C.2.3.1]{SAG} implies that $\psh_\lex(\cC)$ is Grothendieck prestable if $\LEx$ preserves finite limits.
 For assertion~\eqref{thm:plex-1}, it is therefore enough to show that $\LEx$ preserves finite limits.
 
 Egressives in $\cC$ are closed under pullbacks, so we obtain a quasi-topology on $\cC$ in which a covering of an object $x$ is given precisely by an egressive morphism $x' \twoheadrightarrow x$.
 The corresponding sheafification functor $\psh(\cC) \to \sh(\cC)$ preserves finite limits \cite[Proposition~6.2.2.7 \& Corollary~6.2.1.6]{HTT} and restricts to a left exact localisation functor $\psh_\Sigma(\cC) \to \sh_\Sigma(\cC)$ onto the category of additive sheaves by \cite[Corollary~2.7]{pstragowski:synthetic}.

 We wish to identify the full subcategory $\sh_\Sigma(\cC)$ of additive sheaves with the full subcategory of left exact presheaves.
 To do so, we use the equivalence $\psh_\Sigma(\cC) \simeq \Fun^\oplus(\cC^\op,\Sp_{\geq 0})$.
 For an exact sequence $x \overset{i}{\rightarrowtail} y \overset{p}{\twoheadrightarrow} z$ in $\cC$, we obtain a commutative diagram
 \[\begin{tikzcd}
	\yo_\Sigma(x)\ar[r, "\yo_\Sigma(i)"]\ar[d, "\id"] & \yo_\Sigma(y)\ar[r]\ar[d, "\id"] & \cofib(\yo_\Sigma(i))\ar[d, "\kappa(i)"] \\
	\yo_\Sigma(x)\ar[r, "\yo_\Sigma(i)"] & \yo_\Sigma(y)\ar[r, "\yo_\Sigma(p)"] & \yo_\Sigma(z)
 \end{tikzcd}\]
 The upper row is (pointwise) a cofibre sequence of connective spectra, whereas the lower row is (pointwise) a fibre sequence of connective spectra.
 It follows that $\kappa(i)$ is $0$-coconnected.
 Since $\yo_\Sigma(y) \to \cofib(\yo_\Sigma(i))$ is a $\pi_0$-surjection, the induced map on $\pi_0$ identifies $\pi_0(\cofib(\yo_\Sigma(i))$ with the sub-presheaf of $\pi_0\yo_\Sigma(z)$ given by those morphisms which factor over $p$.
 Hence $E(i) = \cofib(\kappa(i))$ is discrete, and $\kappa(i)$ itself is precisely the sieve associated to the covering $p \colon y \twoheadrightarrow z$.
 It follows that an additive presheaf on $\cC$ is a sheaf if and only if it is left exact, so the restricted sheafification functor is precisely $\LEx \colon \psh_\Sigma(\cC) \to \psh_\lex(\cC)$.
 This proves assertion~\eqref{thm:plex-1}.
 Alternatively, see \cite[Theorem~2.8]{pstragowski:synthetic} for a proof in terms of \v{C}ech covers.

Since sheafification involves only limits and filtered colimits, and $\Ab \subseteq \Sp_{\geq 0}$ is closed under both of these operations, the adjunction $\LEx \colon \psh_\Sigma(\cC) \rightleftarrows \psh_\lex(\cC) \cocolon i_\cC$ restricts to an adjunction
 \[ \LEx^\heartsuit \colon \Fun^\oplus(\cC^\op,\Ab) \rightleftarrows \Fun^\lex(\cC^\op,\Ab) \cocolon \inc. \]
 It is well-known that the kernel of $\LEx^\heartsuit$ is given precisely by the full subcategory of weakly $\cC_\egr$-effaceable presheaves \cite[Lemma~A.7.11]{TT}; in fact, this follows directly by unwinding the explicit formula for the sheafification functor.
 This proves assertion~\eqref{thm:plex-2}.

 By \cite[Proposition~C.2.3.8]{SAG}, the kernel of $\LEx$ is a localising subcategory in the sense of \cite[Definition~C.2.3.3]{SAG}.
 If $X$ is now an object in $\ker(\LEx)$, then $\LEx^\heartsuit \pi_0 X \simeq 0$ as well, so $\pi_0 X$ is weakly $\cC_\egr$-effaceable.
 Since $\ker(\LEx)$ is a localising subcategory, we conclude from the cofibre sequence $\tau_{\geq 1} X \to X \to \pi_0X$ that $\tau_{\geq 1}X$ also lies in the kernel of $\LEx$.
 Since $\tau_{\geq 1}X$ is $1$-connective, there exists a cofibre sequence $\Sigma^{-1} \tau_{\geq 1} X \to 0 \to \tau_{\geq 1} X$, and it follows that $\Sigma^{-1}\tau_{\geq 1}X \in \ker(\LEx)$.
 Continuing by induction, we conclude that $\pi_kX$ is weakly $\cC_\egr$-effaceable for every $k \geq 0$, so assertion~\eqref{thm:plex-3} holds.

 We proceed to prove \eqref{thm:plex-4}.
 Let $X \in \ker(\LEx)$.
Like every additive presheaf, $X$ admits a filtration
\[ X_0 \to X_1 \to \ldots \to X \]
such that $\colim_n X_n \simeq X$ and $\cofib(X_n \to X_{n+1}) \simeq \bigoplus_\alpha \Sigma^n\yo_\Sigma(x_\alpha)$.
Since $\Acycl$ commutes with filtered colimits, we have $\Acycl(X) \simeq \colim_n \Acycl(X_n)$.

We claim that $\Acycl(X_n)$ is $(n-1)$-truncated.
First, note that $\Acycl(\bigoplus_\alpha \yo_\Sigma(x_\alpha)) \simeq 0$ because $\Acycl$ preserves filtered colimits and all representables are left exact.
For $n > 0$, we obtain a commutative diagram of additive presheaves
\[\begin{tikzcd}[column sep=1.5em, row sep=1.5em]
	\Acycl(\bigoplus_\alpha \Sigma^{n-1}\yo_\Sigma(x_\alpha))\ar[r]\ar[d] & 0\ar[r]\ar[d] & \Acycl(\bigoplus_\alpha \Sigma^{n}\yo_\Sigma(x_\alpha))\ar[d] \\
	\bigoplus_\alpha \Sigma^{n-1}\yo_\Sigma(x_\alpha)\ar[r]\ar[d] & 0\ar[r]\ar[d] & \bigoplus_\alpha \Sigma^{n}\yo_\Sigma(x_\alpha)\ar[d] \\
	\LEx(\bigoplus_\alpha \Sigma^{n-1}\yo_\Sigma(x_\alpha))\ar[r] & 0\ar[r] & \LEx(\bigoplus_\alpha \Sigma^{n}\yo_\Sigma(x_\alpha))
\end{tikzcd}\]
in which all rows and columns are fibre sequences due to $\psh_\Sigma(\cC)$ and $\psh_\lex(\cC)$ being prestable.
By induction, the top left term is $(n-2)$-truncated, so the top right term is $(n-1)$-truncated.

Since $X_0$ is a sum of representables, this shows that $\Acycl(X_0)$ is trivial.
The chosen filtration on $X$ induces fibre sequences $\Acycl(X_n) \to \Acycl(X_{n+1}) \to \Acycl(\bigoplus_\alpha \Sigma^{n+1}\yo_\Sigma(x_\alpha))$.
The right hand term is $n$-truncated, and the left hand term is $(n-1)$-truncated by induction, so $\Acycl(X_{n+1})$ is $n$-truncated as claimed.

Thus, it suffices to show that every $n$-truncated object in $\ker(\LEx)$ can be obtained from the elementary acyclics through colimits and extensions.
This will be shown by an induction on $n$.
For $n = 0$, we have to consider a discrete and weakly $\cC_\egr$-effaceable presheaf $A$.
Choose a $\pi_0$-surjection $f \colon \bigoplus_\alpha \yo_\Sigma(x_\alpha) \to A$.
Since $A$ is weakly effaceable, there exist egressives $(p_\alpha)_\alpha$ such that $f$ factors through the cofibre of $\bigoplus_\alpha \yo_\Sigma(p_\alpha)$.
The induced map further factors through $\bigoplus_\alpha \pi_0\cofib(\yo_\Sigma(p_\alpha))$ by the discreteness of $A$.
Since $\pi_0\cofib(\yo_\Sigma(p_\alpha)) \simeq E(i_\alpha)$ for some $i_\alpha$ by \cref{lem:elementary-acyclics}, this shows that every discrete, weakly effaceable object admits a resolution by objects which are sums of elementary acyclics.
In particular, the subcategory of discrete objects in $\ker(\LEx)$ is generated by the elementary acyclics under colimits.

For an arbitrary $n$-truncated object $A \in \ker(\LEx)$, there exists an extension $\tau_{\geq 1}A \to A \to \pi_0(A)$.
Since $\tau_{\geq 1}A$ is a suspension of an $(n-1)$-truncated object, it follows by induction that $\tau_{\geq 1}A$ lies in the subcategory generated by the elementary acyclics.
We have just seen that the same is true for $\pi_0(A)$, so $A$ itself is also generated by the elementary acyclics under colimits and extensions.

 We are left with proving the universal property claimed in \eqref{thm:plex-5}.
 By \cite[Proposition~5.5.4.20]{HTT}, restriction along $\LEx$ induces an equivalence
 \[ \LEx^* \colon \Fun^{\Ldec}(\psh_\lex(\cC),\cD) \xrightarrow{\sim} \Fun^{\Ldec}_{K}(\psh_\Sigma(\cC),\cD), \]
 where $K := \{ \kappa(i) \mid i \in \cC_\ing \}$ denotes the collection of comparison maps introduced in \cref{def:comparison-map}.

 Since a colimit-preserving functor $f \colon \psh_\Sigma(\cC) \to \cD$ inverts every morphism $\kappa(i)$ if and only if $f \circ \yo_\Sigma$ sends exact sequences to cofibre sequences, we obtain a pullback square 
 \[\begin{tikzcd}
  \Fun^{\colim}_{K}(\psh_\Sigma(\cC),\cD)\ar[r]\ar[d] &  \Fun^{\sqcup,\mathrm{rex}}(\cC,\cD)\ar[d] \\
  \Fun^{\colim}_{}(\psh_\Sigma(\cC),\cD)\ar[r] & \Fun^\sqcup(\cC,\cD)
 \end{tikzcd}\]
 in which the top right corner denotes the category of functors preserving finite coproducts and sending exact sequences to cofibre sequences.
 The bottom horizontal arrow is an equivalence by \cite[Proposition~5.5.8.15]{HTT}, so the top horizontal arrow is also an equivalence.
 Thus, we obtain an equivalence
 \[ \Fun^{\Ldec}(\psh_\lex(\cC),\cD) \xrightarrow{\sim} \Fun^{\sqcup,\mathrm{rex}}(\cC,\cD). \]
 In particular, if $f \colon \cC \to \cD$ preserves finite coproducts and sends exact sequences to cofibre sequences, it extends to a colimit-preserving functor $F \colon \psh_\lex(\cC) \to \cD$ with $F \circ \yo_\lex \simeq f$.
 Since $\yo_\lex$ is an exact functor by construction, it follows that $f$ sends all exact squares in $\cC$ to pushout squares in $\cD$, so $f$ is already an exact functor of Waldhausen categories.
 This shows that the inclusion $\Funex{}(\cC,\cD) \to \Fun^{\sqcup,\mathrm{rex}}(\cC,\cD)$ is an equivalence, finishing the proof.
 \end{proof}

\begin{rem}
 In general, $\ker(\LEx)$ is properly contained in the subcategory of all weakly $\cC_\egr$-effaceable presheaves.
 For example, if every morphism in $\cC$ is egressive, then every presheaf is weakly $\cC_\egr$-effaceable since one can restrict along $0 \to x$.
\end{rem}

As direct consequences of \cref{thm:plex}, we can prove a presentable version of the Gabriel--Quillen embedding theorem and recover Klemenc's stable envelope of an exact category.

\begin{lem}\label{lem:pshlex-special}
 Let $\cC$ be a small exact category and let
 \[ X \xrightarrow{f} Y \xrightarrow{q} \yo_\lex(z) \]
 be a cofibre sequence in $\psh_\lex(\cC)$ with $z \in \cC$.
 Then there exists a commutative diagram
 \[\begin{tikzcd}
	\yo_\lex(y)\ar[dr, "\yo_\lex(p)"]\ar[d] & \\
	Y\ar[r, "q"] & \yo_\lex(z)
 \end{tikzcd}\]
 such that $p \colon y \twoheadrightarrow z$ is an egressive morphism in $\cC$.
\end{lem}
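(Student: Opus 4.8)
The plan is to realise the statement as a local lifting property for $q$. Since $\yo_\lex(z) = \yo_\Sigma(z)$ is $\LEx$-local, I would first regard $q$ as a morphism $q \colon Y \to \yo_\Sigma(z)$ in $\psh_\Sigma(\cC)$ and form its (pointwise) cofibre $C := \cofib(q) \in \psh_\Sigma(\cC)$. As $\LEx$ preserves colimits while fixing the left exact objects $Y$ and $\yo_\Sigma(z)$, we get $\LEx(C) \simeq \cofib_{\psh_\lex(\cC)}(q) \simeq \Sigma X$, which is $1$-connective because $X$ is connective.

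The key point to extract is that $\pi_0 C$ is weakly $\cC_\egr$-effaceable. As $\LEx$ is right t-exact, $\LEx^\heartsuit(\pi_0 C) \simeq \pi_0\LEx(C)$, and the latter vanishes by the $1$-connectivity just noted. By the description of $\ker(\LEx^\heartsuit)$ recorded in the proof of \cref{thm:plex} (see \cite[Lemma~A.7.11]{TT}), this kernel consists precisely of the weakly $\cC_\egr$-effaceable additive presheaves, so $\pi_0 C$ is one. Applying effaceability to the class $\eta \in \pi_0 C(z)$ defined as the image of $\id_z$ under the map $\pi_0 \yo_\Sigma(z) \to \pi_0 C$, I obtain an egressive morphism $p \colon y \twoheadrightarrow z$ with $p^*\eta = 0$ in $\pi_0 C(y)$.

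It remains to produce an honest lift over $y$ and transport it via the Yoneda lemma. Evaluating the cofibre sequence $Y \to \yo_\Sigma(z) \to C$ at $y$ gives a cofibre sequence of connective spectra, whose long exact sequence contains $\pi_0 Y(y) \xrightarrow{(q_y)_*} \pi_0 \yo_\Sigma(z)(y) \to \pi_0 C(y)$. Under the identification $\pi_0 \yo_\Sigma(z)(y) \cong \Hom_{\Ho\cC}(y,z)$ the class $[p]$ is $p^*(\id_z)$, and naturality of $\yo_\Sigma(z) \to C$ shows that its image in $\pi_0 C(y)$ equals $p^*\eta = 0$; hence $[p]$ lifts to some $\xi \in \pi_0 Y(y)$ with $(q_y)_*\xi = [p]$. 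By the Yoneda lemma $\xi$ corresponds to a morphism $\tilde\xi \colon \yo_\lex(y) \to Y$, and, again by naturality of the Yoneda equivalence, $q \circ \tilde\xi \simeq \yo_\lex(p)$; as $p$ is egressive, this furnishes the desired diagram.

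I expect the main subtlety to be the interplay between the two truncations called $\pi_0$: the one in $\psh_\Sigma(\cC)$ is computed pointwise, whereas the one in $\psh_\lex(\cC)$ is ``sheafified''. Passing to $C$ and exploiting that weak effaceability is by definition a pointwise condition is exactly what converts the a priori merely local surjectivity of $q$ into a genuine pointwise lift over a single egressive morphism, avoiding any iterated-sheafification bookkeeping. The only other thing to check with care is the identification $\pi_0\LEx(C) \simeq \LEx^\heartsuit(\pi_0 C)$, for which right t-exactness of $\LEx$ suffices.
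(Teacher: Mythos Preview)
Your argument is correct and follows essentially the same route as the paper's. The paper phrases the effaceability step slightly differently: it introduces the comparison map $c \colon \Sigma \circ X \to C$ from the pointwise suspension and observes that $\cofib(c) \in \ker(\LEx)$, whence it is weakly effaceable by \cref{thm:plex}\eqref{thm:plex-3}; since $\Sigma \circ X$ is $1$-connective one has $\pi_0 C \cong \pi_0\cofib(c)$, so this is the same effaceability statement you establish directly for $\pi_0 C$. One terminological slip: the identification $\LEx^\heartsuit(\pi_0 C) \simeq \pi_0\LEx(C)$ needs \emph{left} t-exactness of $\LEx$ (preservation of discrete objects), not right t-exactness, which is automatic in the prestable setting; this is precisely what the left-exactness of $\LEx$ from \cref{thm:plex}\eqref{thm:plex-1} provides.
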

\begin{proof}
	Denote by $C$ the cofibre of $q$ in $\psh_\Sigma(\cC)$.
    The pointwise suspension $\Sigma \circ X$ acquires a canonical map $c \colon \Sigma \circ X \to C$ which becomes an equivalence upon application of $\LEx$.
    Hence $\cofib(c)$ is weakly $\cC_\egr$-effaceable by \cref{thm:plex}~\eqref{thm:plex-3}.
	In particular, the morphism $\yo(z) = \yo_\lex(z) \to \cofib(c)$ represents an element in $\pi_0(\cofib(c))(z)$, and this element becomes trivial upon precomposition with some egressive $p \colon y \twoheadrightarrow z$.
	Hence $\yo_\lex(p)$ lifts to a morphism $\yo_\lex(y) \to Y$.
\end{proof}

To establish the Gabriel--Quillen embedding theorem, we will also require Quillen's `obscure axiom'.

\begin{lem}\label{lem:obscure-axiom}
    Let $\cC$ be an exact category.
    Suppose that $p \colon x \to y$ is a morphism in $\cC$ which admits a fibre and that there exists a morphism $f \colon x' \to x$ such that $pf$ is egressive in $\cC$.
    Then $p$ is egressive.
\end{lem}
\begin{proof}
    See \cite[Lemma~2.5]{sw:exact-stable}.
\end{proof}

\begin{theorem}[Gabriel--Quillen embedding theorem]\label{thm:gabriel-quillen}
 Let $\cC$ be a small Waldhausen category. The following are equivalent:
 \begin{enumerate}
  \item\label{it:gabriel-quillen1} $\cC$ is exact.
  \item \label{it:gabriel-quillen2} The category of left exact presheaves $\psh_\lex(\cC)$ is prestable and the Yoneda embedding $\yo_\lex \colon \cC \to \psh_\lex(\cC)$ identifies $\cC$ with an extension-closed subcategory.
  \item \label{it:gabriel-quillen3} There exist a prestable category $\cD$ and a fully faithful functor $\cC \to \cD$ which identifies $\cC$ with an extension-closed subcategory of $\cD$.
 \end{enumerate}
\end{theorem}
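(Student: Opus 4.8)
The plan is to establish the cycle of implications \eqref{it:gabriel-quillen1}$\,\Rightarrow\,$\eqref{it:gabriel-quillen2}$\,\Rightarrow\,$\eqref{it:gabriel-quillen3}$\,\Rightarrow\,$\eqref{it:gabriel-quillen1}. The implication \eqref{it:gabriel-quillen2}$\,\Rightarrow\,$\eqref{it:gabriel-quillen3} is immediate by taking $\cD := \psh_\lex(\cC)$.

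For \eqref{it:gabriel-quillen1}$\,\Rightarrow\,$\eqref{it:gabriel-quillen2}, suppose $\cC$ is exact. Then $\psh_\lex(\cC)$ is Grothendieck prestable by \cref{thm:plex}~\eqref{thm:plex-1}, and $\yo_\lex$ is fully faithful and sends exact sequences to cofibre sequences by \cref{prop:pshlex-waldhausen}; since cofibres of morphisms between connective objects are connective, such a cofibre sequence is automatically a fibre sequence, so $\yo_\lex$ in fact sends exact sequences to short exact sequences. It remains to prove that the essential image is closed under extensions and inherits the original Waldhausen structure. So I would take a short exact sequence $\yo_\lex(x)\to Y\to\yo_\lex(z)$ in $\psh_\lex(\cC)$ and apply \cref{lem:pshlex-special} to obtain an egressive $p\colon y\twoheadrightarrow z$ and a factorisation of $\yo_\lex(p)$ through $Y$. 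Writing $x'\rightarrowtail y\overset{p}{\twoheadrightarrow} z$ for the resulting exact sequence of $\cC$, applying $\yo_\lex$ and passing to fibres over $\yo_\lex(z)$ produces, by full faithfulness, a morphism $g\colon x'\to x$ in $\cC$ together with a morphism of short exact sequences that is the identity on the cokernel term. Since a commutative square in a Grothendieck prestable category whose horizontal cofibres agree is a pushout, the square formed by $\yo_\lex(g)$, the map $\yo_\lex(x')\rightarrowtail\yo_\lex(y)$ and the two maps into $Y$ is a pushout, whence $Y\simeq\yo_\lex(x)\sqcup_{\yo_\lex(x')}\yo_\lex(y)$. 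On the other hand, $\cC$ being exact, the cobase change $y'':=x\sqcup_{x'}y$ exists in $\cC$, the map $x\rightarrowtail y''$ is ingressive with cofibre $z$, and $\yo_\lex$ carries the exact sequence $x\rightarrowtail y''\twoheadrightarrow z$ to a short exact sequence with $\yo_\lex(x)\sqcup_{\yo_\lex(x')}\yo_\lex(y)\simeq\yo_\lex(y'')$; hence $Y\simeq\yo_\lex(y'')$ and the chosen short exact sequence is the image of $x\rightarrowtail y''\twoheadrightarrow z$. This proves extension-closedness; running the argument backwards, starting from a morphism $f$ of $\cC$ with $\cofib(\yo_\lex(f))$ representable, shows that such an $f$ is already ingressive, so the induced Waldhausen structure on $\cC$ is the given one.

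For \eqref{it:gabriel-quillen3}$\,\Rightarrow\,$\eqref{it:gabriel-quillen1}, I would reduce to the stable case: an extension-closed subcategory of a stable $\infty$-category, equipped with its induced ingressives and egressives, is an exact $\infty$-category by the work of Klemenc \cite{klemenc:stablehull} (compare also \cite{sw:exact-stable}). It therefore suffices to embed the prestable category $\cD$ as an extension-closed subcategory of a stable $\infty$-category, and for this one may use the stabilisation $\cD\to\Sp(\cD)$, which for prestable $\cD$ is fully faithful with extension-closed essential image \cite[Appendix~C]{SAG}. As cofibres and fibres of morphisms between objects of $\cD$ agree with those computed in $\Sp(\cD)$, the Waldhausen structure $\cC$ inherits through $\cC\hookrightarrow\cD\hookrightarrow\Sp(\cD)$ coincides with the one it inherits from $\cD$, i.e. with the given one; hence $\cC$ is exact.

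The main obstacle is \eqref{it:gabriel-quillen1}$\,\Rightarrow\,$\eqref{it:gabriel-quillen2}: although \cref{lem:pshlex-special} is precisely the technical input required, the bookkeeping takes place in the merely prestable (not stable) category $\psh_\lex(\cC)$, where one must handle (co)fibre sequences with care, and one has to verify that the full Waldhausen structure on $\cC$ is recovered, not merely that the underlying subcategory is extension-closed.
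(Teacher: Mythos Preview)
Your proof is correct and follows essentially the same route as the paper: the same cycle of implications, the same use of \cref{thm:plex} and \cref{lem:pshlex-special} for \eqref{it:gabriel-quillen1}$\Rightarrow$\eqref{it:gabriel-quillen2}, and the same pushout identification $Y\simeq\yo_\lex(x\cup_{x'}y)$. The only cosmetic differences are that the paper dispatches \eqref{it:gabriel-quillen3}$\Rightarrow$\eqref{it:gabriel-quillen1} in one line (``extension-closed subcategories of exact categories are exact'') rather than passing through $\Sp(\cD)$, and for reflection of exact sequences it invokes Quillen's obscure axiom directly rather than rerunning the extension-closedness argument.
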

\begin{proof}
	Evidently, \eqref{it:gabriel-quillen2} implies \eqref{it:gabriel-quillen3}, and \eqref{it:gabriel-quillen3} implies \eqref{it:gabriel-quillen1} because extension-closed subcategories of exact categories are exact.
	
	Suppose that \eqref{it:gabriel-quillen1} holds. Then \cref{thm:plex} shows that $\yo_\lex \colon \cC \to \psh_\lex(\cC)$ is an exact embedding into a Grothendieck prestable category.
	We are left with showing that the essential image of $\yo_\lex$ is closed under extensions and that $\yo_\lex \colon \cC \to \psh_\lex(\cC)$ reflects exact sequences.
	
	Consider a cofibre sequence $\yo_\lex(x) \to Y \to \yo_\lex(z)$ with $x, z \in \cC$. By \cref{lem:pshlex-special}, there exists an exact sequence
	\[ x' \rightarrowtail y \overset{p}{\twoheadrightarrow} z \]
	such that $p$ factors over $Y$. Since $\psh_\lex(\cC)$ is prestable, the resulting commutative square
	\[\begin{tikzcd}
	 \yo_\lex(x')\ar[r]\ar[d] & \yo_\lex(y)\ar[d] \\
	 \yo_\lex(x)\ar[r] & Y
   \end{tikzcd}\]
   is a pushout, so $Y \simeq \yo_\lex(x \cup_{x'} y)$ by exactness of $\yo_\lex$.
   
   Finally, consider a cofibre sequence $\yo_\lex(x_0) \to \yo_\lex(x_1) \to \yo_\lex(x_2)$ in $\psh_\lex(\cC)$.
   Applying \cref{lem:pshlex-special} once more, we find an egressive morphism $y \twoheadrightarrow x_2$ in $\cC$ which factors over $x_1$. Since $\psh_\lex(\cC)$ is prestable, $\yo_\lex(x_0) \to \yo_\lex(x_1) \to \yo_\lex(x_2)$ is a (pointwise) fibre sequence, so $x_1 \to x_2$ admits a fibre.
    Quillen's obscure axiom \cref{lem:obscure-axiom} implies that $x_1 \to x_2$ is an egressive morphism in $\cC$.
    We conclude that $\yo_\lex$ reflects exact sequences.
\end{proof}

For the sake of completeness, we also record the following property of the Gabriel--Quillen embedding.

\begin{prop}
    Let $\cC$ be a small exact category.
    Then $\cC$ is closed under fibres of egressives in $\psh_\lex(\cC)$ if and only if $\cC$ is weakly idempotent complete.
\end{prop}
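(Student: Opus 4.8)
The plan is to prove the two implications separately, using throughout that a morphism $q$ in the prestable category $\psh_\lex(\cC)$ is egressive precisely when the fibre sequence of $q$ is also a cofibre sequence.

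For the implication ``closed under fibres of egressives $\Rightarrow$ weakly idempotent complete'', I would take a retraction $r\colon b\to a$ in $\cC$, i.e.\ a morphism admitting a section, and use that weak idempotent completeness amounts to every retraction admitting a kernel in $\cC$. Since $\psh_\lex(\cC)$ is additive, $\yo_\lex(r)$ is a split epimorphism, so its fibre sequence $\fib(\yo_\lex(r))\to\yo_\lex(b)\xrightarrow{\yo_\lex(r)}\yo_\lex(a)$ splits and is hence also a cofibre sequence; thus $\yo_\lex(r)$ is egressive in $\psh_\lex(\cC)$. By hypothesis $\fib(\yo_\lex(r))\simeq\yo_\lex(k)$ for some $k\in\cC$, so $\yo_\lex(k)\to\yo_\lex(b)\to\yo_\lex(a)$ is a cofibre sequence of representables. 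Since $\yo_\lex$ reflects exact sequences (as established in the proof of \cref{thm:gabriel-quillen}), the sequence $k\rightarrowtail b\twoheadrightarrow a$ is exact in $\cC$, exhibiting a kernel of $r$. Hence $\cC$ is weakly idempotent complete.

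For the converse, assume $\cC$ is weakly idempotent complete and let $p\colon\yo_\lex(y)\to\yo_\lex(z)$ be egressive in $\psh_\lex(\cC)$; by full faithfulness of $\yo_\lex$ write $p=\yo_\lex(f)$ for a unique $f\colon y\to z$ in $\cC$. It is enough to show that $f$ is already egressive in $\cC$: then $\fib(f)$ exists in $\cC$, the exact functor $\yo_\lex$ sends the exact sequence $\fib(f)\rightarrowtail y\xrightarrow{f}z$ to a cofibre sequence — which is a fibre sequence, $\psh_\lex(\cC)$ being prestable — and therefore $\fib(p)\simeq\yo_\lex(\fib(f))$ is representable. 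To prove $f$ egressive, note that since $p$ is egressive the sequence $\fib(p)\to\yo_\lex(y)\xrightarrow{\yo_\lex(f)}\yo_\lex(z)$ is a cofibre sequence, so \cref{lem:pshlex-special} applies and produces a morphism $\yo_\lex(\tilde y)\to\yo_\lex(y)$ whose composite with $\yo_\lex(f)$ is $\yo_\lex(\tilde p)$ for some egressive $\tilde p\colon\tilde y\twoheadrightarrow z$ in $\cC$; by full faithfulness this morphism has the form $\yo_\lex(\tilde g)$ with $f\circ\tilde g=\tilde p$. Thus some precomposite of $f$ is egressive in $\cC$, and the exact $\infty$-categorical analogue of \cite[Proposition~7.6]{buehler} — in a weakly idempotent complete exact category, $g$ is egressive whenever $g\circ h$ is — yields that $f$ is egressive.

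The only non-formal ingredient, and the step I expect to require the most care, is the cited lemma invoked at the end: in a weakly idempotent complete exact $\infty$-category, a morphism is egressive as soon as some precomposite of it is. I expect this to go through by the same formal manipulation of split monomorphisms and epimorphisms as in the $1$-categorical case \cite{buehler}, using that split epimorphisms are egressive in the weakly idempotent complete setting. This is precisely where weak idempotent completeness enters the argument: \cref{lem:pshlex-special} by itself only exhibits $f$ as egressive after precomposition with some morphism, and in general this ``local'' factorisation need not force $f$ itself to be egressive.
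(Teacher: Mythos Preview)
Your proof is correct and spells out precisely the argument the paper defers to; the paper's own proof consists only of the remark that the proof of \cite[Theorem~1.4]{sw:exact-stable} (which treats the analogous statement for $\derb(\cC)$) carries over verbatim to $\psh_\lex(\cC)$. Your two directions---using reflection of exact sequences for the forward implication, and \cref{lem:pshlex-special} together with the weakly-idempotent-complete obscure axiom (``$fg$ egressive $\Rightarrow$ $f$ egressive'') for the converse---are exactly that argument, so there is nothing to add.
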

\begin{proof}
 The proof given in \cite[Theorem~1.4]{sw:exact-stable} carries over to $\psh_\lex(\cC)$.
\end{proof}

\begin{defn}
    For any exact category $\cC$, define the \emph{presentable stable envelope} of $\cC$ to be
    \[ \psh_\lex^\st(\cC) := \Sp(\psh_\lex(\cC)). \]
\end{defn}

\begin{cor}\label{cor:presentable-stable-hull}
    For every small exact category $\cC$ and every presentable, stable category $\cD$, the restriction functor
    \[ \Fun^{\Ldec}(\psh_\lex^\st(\cC),\cD) \to \Funex{}(\cC,\cD) \]
    is an equivalence.
\end{cor}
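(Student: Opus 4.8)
The plan is to exhibit the restriction functor of the statement as a composite of two equivalences: one supplied by \cref{thm:plex}, the other by the universal property of stabilisation. By \cref{thm:plex}~\eqref{thm:plex-1} the category $\psh_\lex(\cC)$ is Grothendieck prestable, hence in particular pointed and presentable, so its stabilisation $\psh_\lex^\st(\cC) = \Sp(\psh_\lex(\cC))$ makes sense and the canonical exact functor $\cC \to \psh_\lex^\st(\cC)$ factors as
\[ \cC \xrightarrow{\yo_\lex} \psh_\lex(\cC) \xrightarrow{\iota} \psh_\lex^\st(\cC), \]
where $\iota$ denotes the (colimit-preserving) stabilisation functor. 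I would first observe that $\iota \circ \yo_\lex$ is itself an exact functor: $\yo_\lex$ preserves finite coproducts and sends exact sequences to cofibre sequences by \cref{prop:pshlex-waldhausen}, and $\iota$ preserves finite coproducts and cofibre sequences, so the criterion in \cref{thm:plex}~\eqref{thm:plex-5} applies. Hence the restriction functor of the statement equals $\yo_\lex^{*} \circ \iota^{*}$, and it suffices to check that each factor is an equivalence.

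For the factor $\iota^{*}$, I would appeal to the universal property of stabilisation: since $\cD$ is presentable and stable, precomposition with $\iota$ is an equivalence
\[ \iota^{*} \colon \Fun^{\Ldec}\bigl(\psh_\lex^\st(\cC),\cD\bigr) \xrightarrow{\sim} \Fun^{\Ldec}\bigl(\psh_\lex(\cC),\cD\bigr). \]
Concretely, one writes $\Sp(\psh_\lex(\cC)) \simeq \colim\bigl(\psh_\lex(\cC) \xrightarrow{\Sigma} \psh_\lex(\cC) \xrightarrow{\Sigma} \cdots\bigr)$ as a sequential colimit in $\PrL$; applying $\Fun^{\Ldec}(-,\cD)$ turns this into a limit of a tower of functor categories with transition maps given by precomposition with the suspension of $\psh_\lex(\cC)$. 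As any colimit-preserving functor into $\cD$ intertwines that suspension with the suspension on $\cD$, and the latter is invertible, all transition maps in the tower are equivalences; thus the limit is computed by the zeroth projection, which is exactly $\iota^{*}$. Alternatively, this can be cited directly from the theory of stabilisation for Grothendieck prestable categories in \cite[Appendix~C]{SAG}.

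For the factor $\yo_\lex^{*}$, one applies \cref{thm:plex}~\eqref{thm:plex-5} to the pointed cocomplete category $\cD$, obtaining an equivalence $\yo_\lex^{*} \colon \Fun^{\Ldec}(\psh_\lex(\cC),\cD) \xrightarrow{\sim} \Funex{}(\cC,\cD)$. Composing with $\iota^{*}$ proves the corollary. I expect the only step requiring genuine care to be the stabilisation step, i.e. confirming that $\iota$ is initial among colimit-preserving functors from the merely prestable category $\psh_\lex(\cC)$ into presentable stable categories; granting this, the corollary follows formally from \cref{thm:plex}.
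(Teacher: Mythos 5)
Your proof is correct and follows essentially the same route as the paper: the paper likewise factors the restriction through $\psh_\lex(\cC)$, citing \cite[Corollary~1.4.4.5]{HA} for the stabilisation step (the universal property you re-derive via the sequential colimit in $\PrL$) and \cref{thm:plex}~\eqref{thm:plex-5} for the Yoneda step. The only difference is that you spell out the proof of the cited corollary from \cite{HA}, which is fine but not necessary.
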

\begin{proof}
    This is immediate by combining \cite[Corollary~1.4.4.5]{HA} with the universal property of $\psh_\lex(\cC)$ from \cref{thm:plex}~\eqref{thm:plex-5}.
\end{proof}

\begin{rem}
    Let $\cA$ be a small abelian category.
    Then $\Ind(\cA)$ is a Grothendieck abelian category (ie it is abelian, presentable, and filtered colimits are exact), and therefore gives rise to the unbounded derived category $\cD(\Ind(\cA))$ as defined in \cite[Definition~1.3.5.8]{HA}.
    By \cref{cor:presentable-stable-hull} and \cite[Proposition~1.3.5.9 \& 1.3.5.21]{HA}, the exact functor $\cA \to \Ind(\cA) \to \cD(\Ind(\cA))$ induces a colimit-preserving functor
    \[ \psh_\lex^\st(\cA) \to \cD(\Ind(\cA)). \]
    Interpreting $\psh_\lex^\st(\cA)$ as the category of spectrum-valued sheaves on $\cA$ with respect to the topology generated by the epimorphisms in $\cA$, \cite[Theorem~2.64]{pstragowski:synthetic} identifies this functor as the corresponding hypersheafification functor.

    We are not aware of a similar interpretation of the hypersheafification functor in the case of arbitrary exact categories.
\end{rem}

The property of $\yo_\lex$ formulated in \cref{lem:pshlex-special} deserves some attention on its own.
Our terminology is borrowed from \cite{schlichting:delooping}.

\begin{defn}\label{def:special}
 Let $\cC$ be an exact category.
 An extension-closed subcategory $\cU \subseteq \cC$ is \emph{left special} if the following holds:
 for every egressive morphism $p \colon x \twoheadrightarrow u$ with $x \in \cC$ and $u \in \cU$, there exists a morphism $f \colon v \to x$ with $v \in \cU$ such that the composite $pf \colon v \to u$ is an egressive morphism in $\cU$.
 
 A \emph{right special} subcategory is an extension-closed subcategory $\cU \subseteq \cC$ such that $\cU^\op \subseteq \cC^\op$ is left special.
\end{defn}

\begin{ex}\ 
	\begin{enumerate}
	 \item Let $\cC$ be an additive category with its split-exact structure and let $\cU$ be a full additive subcategory.
     Then $\cU \subseteq \cC$ is left special and right special.
	 \item \cref{thm:gabriel-quillen} and \cref{lem:pshlex-special} show that the Yoneda embedding $\yo_\lex \colon \cC \to \psh_\lex(\cC)$ exhibits $\cC$ as a left special subcategory.
     \item The full subcategory of torsion groups in the abelian category of finitely generated abelian groups is right special.
	\end{enumerate}
\end{ex}

A convenient reformulation of \cref{def:special} is the following.

\begin{lem}
 For $\cC$ exact and $\cU \subseteq \cC$ an extension-closed subcategory, the following are equivalent:
 \begin{enumerate}
 	\item $\cU$ is left special in $\cC$.
 	\item Every ingressive morphism $i \colon x \rightarrowtail y$ of $\cC$ with $\cofib(i) \in \cU$ is the pushout of an ingressive morphism in $\cU$.
 \end{enumerate}
\end{lem}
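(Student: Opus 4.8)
The plan is to translate freely between the two descriptions, using that in an exact category an exact sequence $a\rightarrowtail b\twoheadrightarrow c$ is simultaneously a fibre and a cofibre sequence, that cobase change of an ingressive along an arbitrary morphism exists and is again ingressive with an equivalent cofibre, and that for an extension-closed $\cU\subseteq\cC$ a sequence in $\cU$ is exact in $\cU$ exactly when it is exact in $\cC$. Both implications are then essentially formal manipulations with exact sequences, with one honest point to check.

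\textbf{For (1) $\Rightarrow$ (2).} Given an ingressive $i\colon x\rightarrowtail y$ with $c:=\cofib(i)\in\cU$, write $p\colon y\twoheadrightarrow c$ for the cofibre projection. I would apply left specialness to the egressive $p$ (whose target lies in $\cU$) to obtain $f\colon v\to y$ with $v\in\cU$ such that $pf\colon v\twoheadrightarrow c$ is egressive in $\cU$; set $w:=\fib(pf)\in\cU$, so that $w\rightarrowtail v\xrightarrow{pf}c$ is exact. Taking fibres of the vertical maps of the commuting square
\[
\begin{tikzcd}
v \ar[r, "f"] \ar[d, twoheadrightarrow, "pf"'] & y \ar[d, twoheadrightarrow, "p"] \\
c \ar[r, "\id"] & c
\end{tikzcd}
\]
yields a morphism $w\to x$ making the square of ingressives $w\rightarrowtail v$ and $i\colon x\rightarrowtail y$ (with right vertical $f$) commute. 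It then remains to see this square is a pushout. I would deduce this by pushing forward along the Gabriel--Quillen embedding $\yo_\lex\colon\cC\to\psh_\lex(\cC)$ of \cref{thm:gabriel-quillen}: being exact, $\yo_\lex$ sends the cobase change $x\cup_w v$ to $\yo_\lex(x)\cup_{\yo_\lex(w)}\yo_\lex(v)$, and in the prestable category $\psh_\lex(\cC)$ the image square is a map of cofibre sequences that is the identity on the cofibres; rotating, both middle terms are identified with the fibre of the common connecting map $\yo_\lex(c)\to\Sigma\yo_\lex(x)$, so $\yo_\lex(x\cup_w v)\to\yo_\lex(y)$ is an equivalence, and conservativity of $\yo_\lex$ gives $x\cup_w v\xrightarrow{\sim}y$. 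Thus $i$ is the pushout of the ingressive $w\rightarrowtail v$ of $\cU$.

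\textbf{For (2) $\Rightarrow$ (1).} Given an egressive $p\colon x\twoheadrightarrow u$ with $x\in\cC$ and $u\in\cU$, set $k:=\fib(p)\colon w\rightarrowtail x$, so $\cofib(k)\simeq u\in\cU$. By hypothesis $k$ is the cobase change of some ingressive $j\colon a\rightarrowtail b$ in $\cU$ along a morphism $a\to w$; write $h\colon b\to x$ for the induced morphism. Since cobase change preserves cofibres of ingressives, the canonical map $\cofib(j)\to\cofib(k)\simeq u$ is an equivalence, and under it the cofibre projection $b\twoheadrightarrow\cofib(j)$ is identified with $p h$. Hence $f:=h\colon b\to x$ has source $v:=b\in\cU$, and $pf\colon v\twoheadrightarrow u$ is egressive in $\cU$ (being, up to the equivalence $\cofib(j)\simeq u$, the cofibre projection of $j$), which is precisely the data required by left specialness of $\cU$.

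The only step that is more than bookkeeping is the claim in (1) $\Rightarrow$ (2) that the constructed square is a pushout, i.e.\ a short five lemma: a map of exact sequences which is the identity on the two outer terms is an equivalence on the middle term. Passing to $\psh_\lex(\cC)$, or equally to $\psh_\lex^\st(\cC)$, reduces this to the stable case where it is immediate; alternatively one can argue directly inside $\cC$ by forming $P:=x\cup_w v$, observing that the canonical map $P\to y$ underlies a map of exact sequences that is the identity on $x$ and on $c$, and invoking the short five lemma for exact $\infty$-categories.
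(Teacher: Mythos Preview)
Your proposal is correct and follows essentially the same route as the paper: both directions produce the same map of exact sequences and read off the desired conclusion from it. The paper simply asserts ``hence the left square is a pushout'' without further comment, whereas you supply an explicit justification via the Gabriel--Quillen embedding (equivalently, the short five lemma for exact $\infty$-categories); since \cref{thm:gabriel-quillen} precedes this lemma in the paper, there is no circularity.
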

\begin{proof}
 If $\cU$ is left special, we obtain for every ingressive $i \colon x \rightarrowtail y$ a map of exact sequences
 \[\begin{tikzcd}
	u\ar[r, rightarrowtail]\ar[d] & v\ar[r, twoheadrightarrow]\ar[d] & \cofib(i)\ar[d, "\id"] \\
	x\ar[r, "i", rightarrowtail] & y\ar[r, twoheadrightarrow] & \cofib(i)
 \end{tikzcd}\]
 Hence the left square is a pushout. Conversely, the fibre of an egressive $p \colon y \to w$ with $w \in \cU$ is an ingressive with cofibre in $\cU$, so we obtain an analogous pushout square in which the right square witnesses that $\cU \subseteq \cC$ is left special.
\end{proof}

For ordinary exact categories, Keller has shown that inclusions of left special subcategories induce fully faithful functors on derived categories \cite[Section~12]{keller:derived-cats}.
On the level of small categories, \cite[Theorem~1.2]{sw:exact-stable} provides an analogous assertion for stable envelopes of exact categories.
By considering presentable categories, the meaning of this condition can be described a bit more explicitly.
Parts of the following proposition can also be found in \cite[Proposition~2.11 \& Corollary~2.44]{pstragowski:synthetic}.

\begin{prop}\label{prop:special-fullyfaithful}
 	Let $\cC$ be a small exact category and let $\cU \subseteq \cC$ be an extension-closed subcategory.
 	Denote the inclusion functor by $j \colon \cU \to \cC$.
	The following are equivalent:
	\begin{enumerate}
	 \item\label{it:special-fullyfaithful1} $\cU$ is left special in $\cC$;
	 \item\label{it:special-fullyfaithful2} the restriction functor $j^* \colon \psh_\Sigma(\cC) \to \psh_\Sigma(\cU)$ sends weakly $\cC_\egr$-effaceable presheaves to weakly $\cU_\egr$-effaceable presheaves;
	 \item\label{it:special-fullyfaithful3} the square
	  \[\begin{tikzcd}
	  \psh_\Sigma(\cC)\ar[r, "j^*"] & \psh_\Sigma(\cU) \\
	  \psh_\lex(\cC)\ar[r, "j^*"]\ar[u, "i_\cC"] & \psh_\lex(\cU)\ar[u, "i_\cU"]
	\end{tikzcd}\]
	is (vertically) left adjointable, ie the base change transformation
	\[ \LEx_\cU j^* \to j^* \LEx_\cC \]
	is an equivalence;
	\item\label{it:special-fullyfaithful4} $j^* \colon \psh_\lex(\cC) \to \psh_\lex(\cU)$ preserves colimits.
	\end{enumerate}	
 	If these conditions are satisfied, the left adjoint $j_\natural \colon \psh_\lex(\cU) \to \psh_\lex(\cC)$ of $j^*$ is fully faithful.
\end{prop}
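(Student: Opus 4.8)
The plan is to establish the cycle of implications \eqref{it:special-fullyfaithful1} $\Rightarrow$ \eqref{it:special-fullyfaithful2} $\Rightarrow$ \eqref{it:special-fullyfaithful3} $\Rightarrow$ \eqref{it:special-fullyfaithful4} $\Rightarrow$ \eqref{it:special-fullyfaithful1}, and then deduce full faithfulness of $j_\natural$ from adjointability. For \eqref{it:special-fullyfaithful1} $\Rightarrow$ \eqref{it:special-fullyfaithful2}, let $X$ be weakly $\cC_\egr$-effaceable; given $u \in \cU$ and $\xi \in \pi_k(j^*X)(u) = \pi_kX(u)$, left specialness of $\cU$ lets us replace an effacing egressive $x \twoheadrightarrow u$ in $\cC$ by one of the form $v \twoheadrightarrow u$ with $v \in \cU$ (using the pushout reformulation from the preceding lemma, or directly the definition), and restricting $\xi$ along this morphism in $\cU_\egr$ kills it. For \eqref{it:special-fullyfaithful2} $\Rightarrow$ \eqref{it:special-fullyfaithful3}, recall from \cref{thm:plex}~\eqref{thm:plex-1} that $\LEx_\cC$, $\LEx_\cU$ are left exact localisations with kernels the localising subcategories generated by weakly effaceable presheaves (\cref{thm:plex}~\eqref{thm:plex-2},\eqref{thm:plex-3}). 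The base change transformation $\LEx_\cU j^* \to j^* \LEx_\cC$ is an equivalence iff $j^*$ carries $\ker(\LEx_\cC)$ into $\ker(\LEx_\cU)$; since $j^*$ is exact (computed pointwise) and preserves all colimits on $\psh_\Sigma$, and since by \cref{thm:plex}~\eqref{thm:plex-4} $\ker(\LEx_\cC)$ is generated under colimits and extensions by the $E(i)$, it suffices to check $j^*E(i) \in \ker(\LEx_\cU)$; but each $E(i)$ is discrete and weakly $\cC_\egr$-effaceable by \cref{lem:elementary-acyclics}, so \eqref{it:special-fullyfaithful2} plus \cref{thm:plex}~\eqref{thm:plex-2} gives the claim. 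Equivalently one observes directly that $j^*$ preserves $\ker(\Acycl)$-local objects, i.e. left exact presheaves restrict to left exact presheaves, which makes the square commute on the nose and reduces adjointability to the kernel statement.

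For \eqref{it:special-fullyfaithful3} $\Rightarrow$ \eqref{it:special-fullyfaithful4}: the bottom functor $j^* \colon \psh_\lex(\cC) \to \psh_\lex(\cU)$ always preserves finite limits and filtered colimits (both are computed by restricting the pointwise operations and then the localisation does not interfere with filtered colimits, as $i_\cC$, $i_\cU$ preserve them), so it preserves all limits; to see it preserves colimits it suffices, since $\psh_\lex(\cC)$ is prestable and generated under colimits by $\yo_\lex(\cC)$, to check preservation of the colimits $\LEx_\cC(X)$ for $X \in \psh_\Sigma(\cC)$. The adjointability equivalence $j^*\LEx_\cC \simeq \LEx_\cU j^*$ identifies $j^*$ on $\psh_\lex(\cC)$ with (the restriction of) $\LEx_\cU j^*|_{\psh_\Sigma}$, a composite of colimit-preserving functors, whence $j^*$ preserves colimits. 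For \eqref{it:special-fullyfaithful4} $\Rightarrow$ \eqref{it:special-fullyfaithful1}: suppose $j^* \colon \psh_\lex(\cC) \to \psh_\lex(\cU)$ preserves colimits, and let $p \colon x \twoheadrightarrow u$ be egressive with $x \in \cC$, $u \in \cU$, with fibre $w \rightarrowtail x$. Then $\yo_\lex(w) \to \yo_\lex(x) \to \yo_\lex(u)$ is a cofibre sequence in $\psh_\lex(\cC)$, so applying $j^*$ (colimit-preserving, hence preserving this cofibre sequence) yields a cofibre sequence in $\psh_\lex(\cU)$ whose last term is $\yo_\lex(u)$ with $u \in \cU$; now invoke \cref{lem:pshlex-special} for the category $\cU$ to produce an egressive $v \twoheadrightarrow u$ in $\cU$ factoring through the middle term $j^*\yo_\lex(x)$. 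Unwinding the Yoneda description of $\pi_0$, a map $v \to j^*\yo_\lex(x)$ is the same as a map $v \to x$ in $\cC$, and the factorisation exactly says the composite $v \to x \to u$ is egressive in $\cU$ — this is left specialness.

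Finally, for full faithfulness of $j_\natural$: once \eqref{it:special-fullyfaithful4} holds, $j^* \colon \psh_\lex(\cC) \to \psh_\lex(\cU)$ is a colimit-preserving functor between presentable categories and hence admits a right adjoint; but it also has a left adjoint $j_\natural$ because $j \colon \cU \to \cC$ induces a left Kan extension along $\psh_\Sigma$ which descends to the left exact localisations (the composite $\psh_\lex(\cU) \xrightarrow{i_\cU} \psh_\Sigma(\cU) \xrightarrow{j_! = \mathrm{Lan}} \psh_\Sigma(\cC) \xrightarrow{\LEx_\cC} \psh_\lex(\cC)$ is left adjoint to $j^*$ using \eqref{it:special-fullyfaithful3}). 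Since $j \colon \cU \to \cC$ is fully faithful, $j_!$ on presheaf categories is fully faithful, and $j^* j_! \simeq \id$; combined with the adjointability square this yields $j^* j_\natural \simeq \id_{\psh_\lex(\cU)}$ on the generators $\yo_\lex(\cU)$ and hence everywhere by colimit-preservation, so the unit of $(j_\natural, j^*)$ is an equivalence, i.e. $j_\natural$ is fully faithful. The main obstacle I anticipate is the bookkeeping in \eqref{it:special-fullyfaithful4} $\Rightarrow$ \eqref{it:special-fullyfaithful1}, specifically matching the abstract factorisation coming out of \cref{lem:pshlex-special} applied to $\cU$ with the concrete lifting-of-egressives condition in \cref{def:special}: one must be careful that $j^*\yo_\lex(x)$ really is corepresented by $x$ on $\cU$ (which holds since $\cU \subseteq \cC$ is full), and that the cofibre sequence produced by $j^*$ is genuinely the image of the original one rather than merely abstractly equivalent.
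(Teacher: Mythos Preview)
Your proposal is correct and follows essentially the same cycle of implications and the same key arguments as the paper's proof, including the use of \cref{thm:plex}~\eqref{thm:plex-4} for \eqref{it:special-fullyfaithful2}$\Rightarrow$\eqref{it:special-fullyfaithful3}, the adjointability identification $j^*\LEx_\cC \simeq \LEx_\cU j^*$ for \eqref{it:special-fullyfaithful3}$\Rightarrow$\eqref{it:special-fullyfaithful4}, \cref{lem:pshlex-special} for \eqref{it:special-fullyfaithful4}$\Rightarrow$\eqref{it:special-fullyfaithful1}, and the factorisation $j_\natural \simeq \LEx_\cC j_! i_\cU$ for full faithfulness. The only minor infelicity is the unused aside in \eqref{it:special-fullyfaithful3}$\Rightarrow$\eqref{it:special-fullyfaithful4} that ``finite limits and filtered colimits'' yield ``all limits'' (they do not), but your actual argument there---identifying $j^*$ on $\psh_\lex(\cC)$ with the colimit-preserving composite $\LEx_\cU \circ j^*|_{\psh_\Sigma}$---is exactly what the paper does.
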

\begin{proof}
	To show that \eqref{it:special-fullyfaithful1} implies \eqref{it:special-fullyfaithful2}, assume that $\cU$ is left special in $\cC$ and consider a weakly $\cC_\egr$-effaceable presheaf $X \in \psh_\Sigma(\cC)$.
	By definition, there exists for every $u \in \cU$ and $\alpha \in \pi_kX(u)$ an egressive morphism $p \colon x \twoheadrightarrow u$ such that $p^*\alpha = 0$.
	Since $\cU$ is left special, there also exists a morphism $f \colon v \to x$ such that the composite $pf$ is an egressive morphism in $\cU$, and $(pf)^*\alpha = 0$.
	
	Assume \eqref{it:special-fullyfaithful2}.
	Then $j^*$ clearly sends discrete weakly $\cC_\egr$-effaceable presheaves to discrete weakly $\cU_\egr$-effaceable presheaves.
    Since $j^* \colon \psh_\Sigma(\cC) \to \psh_\Sigma(\cU)$ preserves all colimits, it restricts to the kernels of the sheafification functors by \cref{thm:plex}~\eqref{thm:plex-4}.
	The base change transformation in \eqref{it:special-fullyfaithful3} is given by the composite
	\[ \LEx_\cU j^* \to \LEx_\cU j^* i_\cC \LEx_\cC \simeq \LEx_\cU i_\cU j^* \LEx_\cC \xrightarrow{\sim} j^* \LEx_\cC, \]
	where the first and last arrow are induced by the respective unit and counit.
	Therefore, the base change tranformation is an equivalence if the sheafification of the unit morphism $j^*(u) \colon j^* \to j^* i_\cC \LEx_\cC$ is an equivalence.
	This is the case precisely if the cofibre of this morphism lies in $\ker(\LEx_\cU)$.
	As $j^* \colon \psh_\Sigma(\cC) \to \psh_\Sigma(\cU)$ preserves cofibre sequences, it follows that $\LEx_\cU j^*(u)$ is an equivalence, so \eqref{it:special-fullyfaithful3} holds.

	To see that \eqref{it:special-fullyfaithful3} implies \eqref{it:special-fullyfaithful4},
	observe that for every diagram $X \colon I \to \psh_\lex(\cC)$ we have
	\begin{align*}
	\colim_I j^*X
	 &\simeq \LEx_\cU \colim_I i_\cU j^*X \simeq \LEx_\cU \colim_I j^* i_\cC X \\
	 &\simeq \LEx_\cU j^* \colim_I i_\cC X \simeq j^* \LEx_\cC \colim_I i_\cC X \\
	 &\simeq j^*(\colim_I X).
	\end{align*}
	Suppose that \eqref{it:special-fullyfaithful4} holds and consider an egressive morphism $p \colon x \twoheadrightarrow u$ with $x \in \cC$ and $u \in \cU$. Then
	\[ j^*\yo_\lex(\fib(p)) \to j^*\yo_\lex(x) \xrightarrow{\yo(p)} j^*\yo_\lex(u) \]
	is a cofibre sequence in $\psh_\lex(\cU)$.
	Since $j^*\yo_\lex(u)$ is representable and $\cU \subseteq \psh_\lex(\cU)$ is left special by \cref{lem:pshlex-special}, there exists a morphism $f \colon v \to x$ such that the composite $pf$ is egressive in $\cU$.
	So $\cU$ is left special in $\cC$.
	
	To conclude that $j_\natural$ is fully faithful under these conditions, note that $j_\natural \simeq \LEx_\cC j_! i_\cU$, where $j_! \colon \psh_\Sigma(\cU) \to \psh_\Sigma(\cC)$ is the left adjoint to $j^*$.
	The associated unit transformation is given by the composite
	\[ \id \xrightarrow{\sim} \LEx_\cU i_\cU \xrightarrow{\sim} \LEx_\cU j^* j_! i_\cU \to j^* \LEx_\cC j_! i_\cU \simeq j^* j_\natural, \]
  the third arrow being the base change transformation.
\end{proof}

\begin{cor}\label{cor:special-recollement}
	Let $\cC$ be a small exact category and let $\cU \subseteq \cC$ be a left special subcategory.
	Then there exists a recollement
	\[\begin{tikzcd}
	 \psh^\cU_\lex(\cC)\ar[r, hookrightarrow] & \psh_\lex(\cC)\ar[l, bend right=60]\ar[l, bend right=25, phantom, "\dashv" rotate=-90]\ar[l, bend left=25, phantom, "\dashv"' rotate =-90]\ar[l, bend left=60]\ar[r, "j^*"] & \psh_\lex(\cU)\ar[l, bend right=60, "j_\natural"'] \ar[l, bend right=35, phantom, "\dashv" rotate=-90]\ar[l, bend left=25, phantom, "\dashv" rotate=-90]\ar[l, bend left=60, "j_*"]
	\end{tikzcd}\]
	of Grothendieck prestable categories, where $\psh_\lex^\cU(\cC)$ denotes the full subcategory of left exact presheaves vanishing on $\cU$.
\end{cor}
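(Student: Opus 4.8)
The plan is to check that the displayed diagram satisfies the axioms of a recollement of Grothendieck prestable categories (in the sense of \cite{HA}); Proposition~\ref{prop:special-fullyfaithful} provides the only non-formal input.

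First I would collect the functors together with their basic properties. Since $\cU$ is left special in $\cC$, Proposition~\ref{prop:special-fullyfaithful}~\eqref{it:special-fullyfaithful4} shows that $j^* \colon \psh_\lex(\cC) \to \psh_\lex(\cU)$ preserves colimits, hence by presentability it admits a right adjoint $j_*$, while the last assertion of that proposition gives that its left adjoint $j_\natural$ is fully faithful. The functor $j^*$ is moreover left exact: it is the restriction of presheaves (which lands in $\psh_\lex(\cU)$ because $\cU$ is extension-closed), and finite limits in both $\psh_\lex(\cC)$ and $\psh_\lex(\cU)$ are computed pointwise; so $j^*$ is exact as a functor of Grothendieck prestable categories. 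A standard fact about a string of adjoints $F \dashv G \dashv H$ --- namely that $F$ is fully faithful if and only if $H$ is, both being equivalent to $G$ being a localisation --- shows that $j_*$ is fully faithful as well. Finally $j^* j_\natural \simeq \id$ shows $j^*$ is essentially surjective, and $\ker(j^*) = \{ X \in \psh_\lex(\cC) \mid X|_\cU \simeq 0 \}$ is exactly the full subcategory $\psh_\lex^\cU(\cC)$ of left exact presheaves vanishing on $\cU$; thus $j^*$ exhibits $\psh_\lex(\cU)$ as the quotient of $\psh_\lex(\cC)$ by $\psh_\lex^\cU(\cC)$.

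Next I would obtain the two adjoints to the inclusion $i \colon \psh_\lex^\cU(\cC) \hookrightarrow \psh_\lex(\cC)$ from the resulting semiorthogonal decompositions. For $A \in \ker(j^*)$ and any $Y$ we have $\Map(A, j_* Y) \simeq \Map(j^* A, Y) \simeq \ast$, and for any $X$ the fibre of the unit $X \to j_* j^* X$ lies in $\ker(j^*)$ (apply the exact functor $j^*$ and use $j^* j_* \simeq \id$); so $(\ker(j^*), \img(j_*))$ is a semiorthogonal decomposition and $X \mapsto \fib(X \to j_* j^* X)$ defines a right adjoint $i^!$ to $i$. Dually, $\Map(j_\natural Y, A) \simeq \Map(Y, j^* A) \simeq \ast$ for $A \in \ker(j^*)$, and the cofibre of the counit $j_\natural j^* X \to X$ lies in $\ker(j^*)$ (apply the colimit-preserving functor $j^*$ and use $j^* j_\natural \simeq \id$), so $(\img(j_\natural), \ker(j^*))$ is a semiorthogonal decomposition and $X \mapsto \cofib(j_\natural j^* X \to X)$ defines a left adjoint $i^*$ to $i$. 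In particular $i$ is fully faithful with both adjoints.

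Finally I would verify that all three categories are Grothendieck prestable: this holds for $\psh_\lex(\cC)$ and $\psh_\lex(\cU)$ by Theorem~\ref{thm:plex}~\eqref{thm:plex-1}, while $\psh_\lex^\cU(\cC) = \ker(j^*)$ is the pullback $\psh_\lex(\cC) \times_{\psh_\lex(\cU)} \{0\}$ in $\PrL$ --- hence presentable --- and is closed in $\psh_\lex(\cC)$ under colimits and, since $j^*$ is exact, under extensions, so it is a localising subcategory of a Grothendieck prestable category and therefore itself Grothendieck prestable. Assembling these facts --- $i$ fully faithful with both adjoints, $j_\natural$ and $j_*$ fully faithful, and $\psh_\lex^\cU(\cC) = \ker(j^*)$ --- yields the recollement. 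The semiorthogonal-decomposition verifications are purely formal; the single place where content enters, and where left specialness is used, is the appeal to Proposition~\ref{prop:special-fullyfaithful} for the colimit-preservation of $j^*$ and the full-faithfulness of $j_\natural$. I expect the only real care to be needed in matching the data produced here against the precise axiomatisation of a recollement of (Grothendieck pre)stable categories in the reference being followed --- in particular the convention fixing which of the two outer functors must be fully faithful.
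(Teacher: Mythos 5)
Your proposal is correct and follows essentially the same route as the paper: Proposition~\ref{prop:special-fullyfaithful} gives that $j^*$ preserves colimits (hence has a right adjoint $j_*$) and that $j_\natural$ is fully faithful, after which the two adjoints of the inclusion of $\ker(j^*)=\psh_\lex^\cU(\cC)$ are produced formally as $\cofib(j_\natural j^*\Rightarrow\id)$ and $\fib(\id\Rightarrow j_*j^*)$, and Grothendieck prestability of the kernel follows from its closure properties inside $\psh_\lex(\cC)$. The paper simply compresses the semiorthogonal-decomposition verifications into ``it follows formally,'' and closes the Grothendieck-prestability step by noting the kernel is closed under both limits and colimits (using that $j^*$ is also a right adjoint), which is marginally more direct than your localising-subcategory argument.
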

\begin{proof}
 Since $\cU$ is left special in $\cC$, \cref{prop:special-fullyfaithful} implies that $j^*$ preserves colimits, so it admits a right adjoint $j_* \colon \psh_\lex(\cU) \to \psh_\lex(\cC)$.
 It follows formally that the inclusion of $\psh_\lex^\cU(\cC)$ admits both adjoints, and that these are given by
 \[ \cofib(j_\natural j^* \Rightarrow \id)\quad\text{and}\quad \fib(\id \Rightarrow j_*j^*), \]
 respectively.
 In particular, $\psh_\lex^\cU(\cC)$ is closed under limits and colimits, so it is Grothendieck prestable.
\end{proof}

It is easy to recover Klemenc's stable envelope of an exact category from the presentable Gabriel--Quillen embedding.

\begin{defn}
 For a small exact category $\cC$, denote by $\derb_{\geq 0}(\cC)$ the smallest full prestable subcategory of $\psh_\lex(\cC)$ containing the essential image of the Yoneda embedding $\yo_\lex$.
 Define the \emph{stable envelope}
 \[ \derb(\cC) := \stab(\derb_{\geq 0}(\cC)) \]
 as the Spanier--Whitehead stabilisation of $\derb_{\geq 0}(\cC)$.
\end{defn}

\begin{rem}\label{rem:derb-special}
    \cref{prop:special-fullyfaithful} directly implies \cite[Theorem~1.2]{sw:exact-stable}: the functor $\derb(\cU) \to \derb(\cC)$ is fully faithful for every left or right special subcategory $\cU \subseteq \cC$.
\end{rem}

\begin{prop}\label{prop:derb-pos-left-adjoint}
    Let $\cC$ be a small exact category and let $\cD$ be a prestable category.
    Restriction along $\cC \to \derb_{\geq 0}(\cC)$ induces an equivalence
    \[ \Fun^\exct(\derb_{\geq 0}(\cC),\cD) \xrightarrow{\sim} \Fun^\exct(\cC,\cD). \]
    In particular, $\derb_{\geq 0}$ refines to a left adjoint functor to the fully faithful inclusion of small prestable categories to small exact categories.
\end{prop}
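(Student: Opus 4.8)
The plan is to bootstrap from the universal property of $\psh_\lex(\cC)$ in \cref{thm:plex}~\eqref{thm:plex-5} by passing through $\Ind$-completions. The first step is to identify $\psh_\lex(\cC)$ with $\Ind(\derb_{\geq 0}(\cC))$. By \cref{prop:pshlex-waldhausen} the category $\psh_\lex(\cC)$ is compactly generated, and since $\LEx$ preserves compact objects with $\LEx\circ\yo_\Sigma\simeq\yo_\lex$, its subcategory of compact objects $\psh_\lex(\cC)^\omega$ is the idempotent completion $\cC_0^{\natural}$ of the full subcategory $\cC_0\subseteq\psh_\lex(\cC)$ generated by the representables $\yo_\lex(\cC)$ under finite colimits. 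As an extension of compact objects is again compact, $\psh_\lex(\cC)^\omega$ is also closed under extensions, so there are inclusions $\cC_0\subseteq\derb_{\geq 0}(\cC)\subseteq\psh_\lex(\cC)^\omega=\cC_0^{\natural}$. Applying $\Ind(-)$, which is insensitive to idempotent completion, and using $\psh_\lex(\cC)\simeq\Ind(\psh_\lex(\cC)^\omega)$, I obtain an equivalence $\Ind(\derb_{\geq 0}(\cC))\simeq\psh_\lex(\cC)$ compatible with the two embeddings of $\cC$; note $\Ind(\derb_{\geq 0}(\cC))$ is Grothendieck prestable because $\derb_{\geq 0}(\cC)$ is a small prestable category.

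Next, for a small prestable category $\cD$, I set $\widehat\cD:=\Ind(\cD)$, a Grothendieck prestable (in particular pointed cocomplete) category. The Yoneda embedding $\cD\hookrightarrow\widehat\cD$ is fully faithful and exact, and identifies $\cD$ with a subcategory closed under finite colimits and extensions: it factors as $\cD\hookrightarrow\cD^{\natural}=\widehat\cD^{\omega}\hookrightarrow\widehat\cD$, the first inclusion being extension-closed because the idempotent completion of an exact $\infty$-category is exact with the original as an extension-closed subcategory, the second because compact objects are closed under extensions. Combining the universal property of $\Ind$-completion (an exact functor out of $\derb_{\geq 0}(\cC)$ preserves finite colimits, hence extends uniquely to a colimit-preserving functor out of $\Ind(\derb_{\geq 0}(\cC))\simeq\psh_\lex(\cC)$, and conversely a colimit-preserving functor restricts to an exact one since the target is pointed) with \cref{thm:plex}~\eqref{thm:plex-5}, I obtain a chain of equivalences
\[ \Fun^{\exct}(\derb_{\geq 0}(\cC),\widehat\cD)\;\simeq\;\Fun^{\Ldec}(\psh_\lex(\cC),\widehat\cD)\;\simeq\;\Funex{}(\cC,\widehat\cD), \]
which, unwound, is restriction along $\yo_\lex\colon\cC\to\derb_{\geq 0}(\cC)$.

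It then remains to cut this down to functors valued in $\cD\subseteq\widehat\cD$. The key observation is that a colimit-preserving functor $F\colon\psh_\lex(\cC)\to\widehat\cD$ sends $\derb_{\geq 0}(\cC)$ into $\cD$ as soon as it sends $\yo_\lex(\cC)$ into $\cD$: the full subcategory $\{X\in\psh_\lex(\cC)\mid F(X)\in\cD\}$ is closed under finite colimits and extensions, since $F$ preserves both and $\cD$ is closed under both in $\widehat\cD$, so if it contains $\yo_\lex(\cC)$ it contains the prestable subcategory it generates, namely $\derb_{\geq 0}(\cC)$. As any functor landing in $\cD$ factors essentially uniquely and exactly through the fully faithful, exactness-reflecting embedding $\cD\hookrightarrow\widehat\cD$, the chain of equivalences above restricts to the asserted equivalence $\Fun^{\exct}(\derb_{\geq 0}(\cC),\cD)\xrightarrow{\sim}\Funex{}(\cC,\cD)$. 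Finally, $\yo_\lex\colon\cC\to\derb_{\geq 0}(\cC)$ is an exact functor of exact categories—it is exact into $\psh_\lex(\cC)$ by \cref{prop:pshlex-waldhausen} and $\derb_{\geq 0}(\cC)$ is extension-closed in $\psh_\lex(\cC)$—so the resulting equivalence, being natural in $\cD$, exhibits $\derb_{\geq 0}$ as a left adjoint to the fully faithful inclusion of small prestable categories into small exact categories, with unit $\yo_\lex$.

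I expect the main obstacle to be the bookkeeping around idempotent completions and extension-closedness: pinning down that $\cC_0\subseteq\derb_{\geq 0}(\cC)\subseteq\cC_0^{\natural}$, so that $\Ind(\derb_{\geq 0}(\cC))\simeq\psh_\lex(\cC)$, and that $\cD\hookrightarrow\Ind(\cD)$ is extension-closed. Both hinge on the facts that extensions of compact objects are compact and that the idempotent completion of an exact $\infty$-category is exact with the original as an extension-closed subcategory; once these and \cref{thm:plex}~\eqref{thm:plex-5} are in hand, the remaining manipulations are formal.
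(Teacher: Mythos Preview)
Your argument is correct and takes a somewhat different route from the paper's proof. The paper embeds $\cD$ into $\psh_\lex(\cD)$ and establishes that the bottom restriction map in the evident pullback square is an equivalence by proving $\psh_\lex(\cC)\simeq\psh_\lex(\derb_{\geq 0}(\cC))$ via the left-special machinery of \cref{prop:special-fullyfaithful} (the inclusion $\cC\hookrightarrow\derb_{\geq 0}(\cC)$ is left special, hence induces a fully faithful left adjoint on $\psh_\lex$, and generation gives essential surjectivity). You instead embed $\cD$ into $\Ind(\cD)$ and argue directly that $\Ind(\derb_{\geq 0}(\cC))\simeq\psh_\lex(\cC)$ by sandwiching $\derb_{\geq 0}(\cC)$ between $\cC_0$ and $\cC_0^\natural=\psh_\lex(\cC)^\omega$. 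Since $\Ind(\cD)\simeq\psh_\lex(\cD)$ for prestable $\cD$, the two enlargements of $\cD$ actually agree, so the difference is really in how the identification on the source side is obtained: your compact-objects argument avoids \cref{prop:special-fullyfaithful} entirely and in effect proves \cref{cor:ind-derb-is-plex} first, whereas the paper deduces that corollary \emph{from} the present proposition. The ``cutting down'' step is the same in both proofs.

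One minor omission: the statement allows $\cD$ to be an arbitrary prestable category, but you silently pass to the case of small $\cD$ when forming $\Ind(\cD)$. The paper handles this explicitly by observing that any exact functor from a small exact category into $\cD$ factors through a small prestable subcategory (e.g.\ the closure of its image under finite colimits and extensions), so
\[ \Funex{}(\cE,\cD)\simeq\colim_{\cD'\subseteq\cD}\Funex{}(\cE,\cD') \]
over small prestable subcategories $\cD'$. You should add this reduction at the start.
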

\begin{proof}
 For any small exact category $\cE$, the image of an exact functor $\cE \to \cD$ is contained in a small prestable subcategory of $\cD$, for example the closure of its image under finite colimits and extensions.
 Therefore, we have an equivalence
 \[ \colim_{\substack{\cD' \subseteq \cD \\ \text{small prestable subcat.}}} \Funex{}(\cE,\cD') \xrightarrow{\sim} \Funex{}(\cE,\cD) \]
 which is natural in $\cE$.
 This allows us to assume without loss of generality that $\cD$ is small.

 Since $\derb_{\geq 0}(\cC)$ is generated under finite colimits and extensions by $\cC$, the square
    \[\begin{tikzcd}
        \Fun^\exct(\derb_{\geq 0}(\cC),\cD)\ar[r]\ar[d] & \Fun^\exct(\cC,\cD)\ar[d] \\
        \Fun^\exct(\derb_{\geq 0}(\cC),\psh_\lex(\cD))\ar[r] & \Fun^\exct(\cC,\psh_\lex(\cD))
    \end{tikzcd}\]
    is a pullback.
    Since $\cC \to \derb_{\geq 0}(\cC)$ is left special, the induced colimit-preserving functor $\psh_\lex(\cC) \to \psh_\lex(\derb_{\geq 0}(\cC))$ is fully faithful, and it is essentially surjective since $\derb_{\geq 0}(\cC)$ itself is generated by $\cC$.
    It follows from \cref{thm:plex} that the lower horizontal arrow in the above diagram is an equivalence.
\end{proof}

\begin{cor}\label{cor:derb-left-adjoint}
 Let $\cC$ be a small exact category and let $\cD$ be a stable category.
 Restriction along $\cC \to \derb(\cC)$ induces an equivalence
 \[ \Fun^\exct(\derb(\cC),\cD) \xrightarrow{\sim} \Fun^\exct(\cC,\cD). \]
 In particular, $\derb$ refines to a left adjoint of the fully faithful inclusion of small stable categories to small exact categories.
\end{cor}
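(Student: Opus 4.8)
The statement should follow formally by stacking two universal properties. Since $\derb(\cC)$ is by definition the Spanier--Whitehead stabilisation $\stab(\derb_{\geq 0}(\cC))$, I would factor the restriction functor as
\[ \Fun^\exct(\derb(\cC),\cD) \longrightarrow \Fun^\exct(\derb_{\geq 0}(\cC),\cD) \longrightarrow \Fun^\exct(\cC,\cD), \]
where the first arrow restricts along the canonical functor $\derb_{\geq 0}(\cC) \to \stab(\derb_{\geq 0}(\cC))$ and the second along $\yo_\lex \colon \cC \to \derb_{\geq 0}(\cC)$, and then show that each arrow is an equivalence.

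The second arrow is an equivalence by \cref{prop:derb-pos-left-adjoint}, applied to the prestable category underlying $\cD$ (every stable category is in particular prestable). The first arrow is an equivalence by the universal property of the Spanier--Whitehead stabilisation: the functor $\stab \colon \catprst \to \catst$ is left adjoint to the fully faithful inclusion $\catst \hookrightarrow \catprst$, compatibly with the enrichment in categories of exact functors, so that restriction along $\cE \to \stab(\cE)$ induces an equivalence $\Fun^\exct(\stab(\cE),\cD) \xrightarrow{\sim} \Fun^\exct(\cE,\cD)$ for every small prestable $\cE$ and stable $\cD$ — this is exactly the input underlying Klemenc's construction in \cite{klemenc:stablehull}. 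Composing gives the asserted equivalence. The ``in particular'' clause is then immediate: $\derb = \stab \circ \derb_{\geq 0}$ is a composite of left adjoints to the fully faithful inclusions $\catst \hookrightarrow \catprst$ and $\catprst \hookrightarrow \exact$, hence is itself left adjoint to the inclusion $\catst \hookrightarrow \exact$, with unit at $\cC$ given by the composite $\cC \to \derb_{\geq 0}(\cC) \to \derb(\cC)$, and the displayed equivalence is the corresponding natural equivalence of mapping categories.

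The only point that requires a little care — and the closest thing to an obstacle — is a compatibility of conventions: the universal property of $\stab$ is typically phrased for \emph{right exact} (finite-colimit-preserving) functors, whereas here we need exact functors of exact categories. These agree in the case at hand: a functor between stable categories is exact as soon as it preserves finite colimits, and an exact functor of exact categories out of a prestable category automatically preserves all finite colimits, since a prestable category is additive and every pushout square in it is built from biproducts and cofibres of ingressives. Granting this identification, the argument is purely formal and parallels the proof of \cref{cor:presentable-stable-hull}.
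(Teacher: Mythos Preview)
Your proposal is correct and follows essentially the same approach as the paper: the paper's proof simply says the result is immediate from \cref{prop:derb-pos-left-adjoint} together with the fact that $\stab$ is left adjoint to the inclusion of stable categories into prestable ones. Your additional discussion of the compatibility between ``exact'' and ``right exact'' functors on prestable categories is a reasonable elaboration of a point the paper leaves implicit, but the underlying argument is the same.
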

\begin{proof}
    This is immediate from \cref{prop:derb-pos-left-adjoint} because $\stab$ is in particular left adjoint to the inclusion of stable categories into prestable categories.
\end{proof}

\begin{cor}\label{cor:ind-derb-is-plex}
    The inclusions $\derb_{\geq 0} \to \psh_\lex$ and $\derb \to \psh_\lex^\st$ induce equivalences
    \[ \Ind \circ \derb_{\geq 0} \simeq \psh_\lex \quad\text{and}\quad \Ind \circ \derb \simeq \psh_\lex^\st. \]
\end{cor}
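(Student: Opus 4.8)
The plan is to deduce both equivalences formally from \cref{thm:plex} and \cref{prop:derb-pos-left-adjoint}; essentially no new input is needed, and the point is to combine compact generation with the universal property of stabilisation.

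For the first equivalence I would argue by compact generation. First, $\yo_\lex$ takes values in the compact objects of $\psh_\lex(\cC)$: each representable $\yo_\Sigma(x)$ is compact in $\psh_\Sigma(\cC)$, and $\LEx$ preserves compact objects by \cref{prop:pshlex-waldhausen}, so $\yo_\lex(x) \simeq \LEx(\yo_\Sigma(x))$ is compact. By \cref{thm:plex}~\eqref{thm:plex-1} the category $\psh_\lex(\cC)$ is Grothendieck prestable, so filtered colimits in it are left exact; since filtered colimits of anima commute with finite limits, a short argument with mapping spaces then shows that the compact objects of $\psh_\lex(\cC)$ are closed under finite colimits and extensions. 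As $\derb_{\geq 0}(\cC)$ is generated by $\yo_\lex(\cC)$ under precisely these operations (see the proof of \cref{prop:derb-pos-left-adjoint}), it consists of compact objects, and is in particular essentially small. On the other hand, $\psh_\Sigma(\cC)$ is generated under colimits by the representables and $\LEx$ preserves colimits, so $\psh_\lex(\cC)$ is generated under colimits, hence---since $\derb_{\geq 0}(\cC)$ is closed under finite colimits---under filtered colimits by $\derb_{\geq 0}(\cC)$. Consequently the colimit-preserving extension $\Ind(\derb_{\geq 0}(\cC)) \to \psh_\lex(\cC)$ of the inclusion is fully faithful (being induced by a fully faithful functor with image in the compact objects) and essentially surjective, hence an equivalence. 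It is, by construction, the colimit-preserving extension of the natural inclusion $\derb_{\geq 0} \hookrightarrow \psh_\lex$, so the resulting equivalence $\Ind \circ \derb_{\geq 0} \simeq \psh_\lex$ is natural in $\cC$. (That $\derb_{\geq 0}(\cC)$ need not be idempotent complete is irrelevant, since $\Ind$ does not distinguish a small category from its idempotent completion.)

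For the second equivalence I would reduce to the first via the natural identification $\Sp(\Ind(\cE)) \simeq \Ind(\stab(\cE))$ for any small prestable category $\cE$. To prove this, note that for every presentable stable category $\cD$ both sides corepresent $\cD \mapsto \Fun^{\exct}(\cE,\cD)$: on one side $\Fun^{\Ldec}(\Ind(\stab(\cE)),\cD) \simeq \Fun^{\exct}(\stab(\cE),\cD) \simeq \Fun^{\exct}(\cE,\cD)$ by the universal property of $\Ind$ together with the fact that $\stab$ is left adjoint to the inclusion of stable into prestable categories (as in the proof of \cref{cor:derb-left-adjoint}); on the other side $\Fun^{\Ldec}(\Sp(\Ind(\cE)),\cD) \simeq \Fun^{\Ldec}(\Ind(\cE),\cD) \simeq \Fun^{\exct}(\cE,\cD)$ by the universal property of stabilisation \cite[Corollary~1.4.4.5]{HA} and that of $\Ind$, using that a finite-colimit-preserving functor out of a prestable category is automatically exact. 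Since both sides are presentable stable, the Yoneda lemma yields the identification. Applying it to $\cE = \derb_{\geq 0}(\cC)$ and invoking the first part gives
\[ \Ind(\derb(\cC)) = \Ind(\stab(\derb_{\geq 0}(\cC))) \simeq \Sp(\Ind(\derb_{\geq 0}(\cC))) \simeq \Sp(\psh_\lex(\cC)) = \psh_\lex^\st(\cC), \]
naturally in $\cC$, and tracing through the construction shows this is the equivalence induced by the inclusion $\derb \hookrightarrow \psh_\lex^\st$.

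I expect the only step requiring real care to be the claim that $\derb_{\geq 0}(\cC)$ lands in the compact objects of $\psh_\lex(\cC)$, which hinges on compact objects being extension-closed and hence on the Grothendieck prestability of $\psh_\lex(\cC)$; everything else is a formal consequence of the universal properties already established.
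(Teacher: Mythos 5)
Your proof is correct, but for the first equivalence it takes a genuinely different route from the paper. The paper argues purely by universal properties: $\Ind(\derb_{\geq 0}(\cC))$ is Grothendieck prestable and, by \cref{prop:derb-pos-left-adjoint} together with the universal property of $\Ind$, corepresents the same functor $\cD \mapsto \Funex{}(\cC,\cD)$ as $\psh_\lex(\cC)$ does by \cref{thm:plex}~\eqref{thm:plex-5}; Yoneda then finishes the job, and the stable statement "follows by stabilising" exactly as in your second paragraph. You instead identify $\derb_{\geq 0}(\cC)$ with a generating family of compact objects of $\psh_\lex(\cC)$ and invoke the recognition principle for $\Ind$-categories. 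Your route costs you the verification that compact objects of a Grothendieck prestable category are closed under extensions --- which is true, but the "short argument with mapping spaces" is most safely run at the level of mapping spectra in $\Sp(\psh_\lex(\cC))$ (using $\Map_{\psh_\lex(\cC)}(A,-)\simeq \Omega^\infty\map(A,-)$ and the shift trick for negative homotopy groups), since a naive comparison of fibre sequences of mapping spaces has a $\pi_0$-surjectivity subtlety; you correctly flag this as the delicate step. In exchange, your argument yields strictly more than the corollary: it exhibits the idempotent completion of $\derb_{\geq 0}(\cC)$ as precisely the subcategory of compact objects of $\psh_\lex(\cC)$, information that is not visible from the paper's formal argument. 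Both treatments of naturality and of the second equivalence are fine.
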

\begin{proof}
    For every small exact category $\cC$ and every Grothendieck prestable category $\cD$, we have equivalences
    \[ \Fun^{\Ldec}(\Ind(\derb_{\geq 0}(\cC)),\cD) \xrightarrow{\sim} \Funex{}(\derb_{\geq 0}(\cC),\cD) \xrightarrow{\sim} \Funex{}(\cC,\cD) \]
    by \cref{prop:derb-pos-left-adjoint}.
    Since $\Ind(\derb_{\geq 0}(\cC))$ is Grothendieck prestable, it satisfies the same universal property as $\psh_\lex(\cC)$ by \cref{thm:plex}~\eqref{thm:plex-5}.

    The second equivalence follows by stabilising.
\end{proof}

Finally, the presentable Gabriel--Quillen embedding also allows for an easy construction of idempotent completions of exact categories as exact categories.

\begin{lem}\label{lem:idem}
    Let $\cC$ be a small exact category.
    \begin{enumerate}
        \item The Gabriel--Quillen embedding $\yo_\lex(\cC) \colon \cC \to \psh_\lex(\cC)$ induces a fully faithful functor $\idem{\cC} \to \psh_\lex(\cC)$ which exhibits the idempotent completion as an extension-closed subcategory.
        \item In the resulting exact structure on $\idem{\cC}$, the exact sequences are precisely the retracts of exact sequences in $\cC$.
        \item\label{lem:idem-3} If $\cU$ is an extension-closed subcategory of $\cC$, then $\idem{\cU}$ is an extension-closed subcategory of $\idem{\cC}$.
    \end{enumerate} 
\end{lem}
\begin{proof}
    The idempotent completion of $\cC$ is the retract-closure of $\cC$ in $\psh(\cC)$.
    Since the Yoneda embedding factors over $\psh_\lex(\cC)$ and retracts of left exact presheaves are also left exact, this identifies $\idem{\cC}$ with a full subcategory of $\psh_\lex(\cC)$.

    Let $\widehat{x} \to Y \to \widehat{z}$ be a cofibre sequence in $\psh_\lex(\cC)$ with $\widehat{x}, \widehat{z} \in \idem{\cC}$.
    Then there exist $x', z' \in \idem(\cC)$ such that $\widehat{x} \oplus x' \simeq x \in \cC$ and $\widehat{z} \oplus z' \simeq z \in \cC$.
    From the cofibre sequence $x \to Y \oplus x' \oplus z' \to z$, we conclude that $Y$ is a retract of an object in $\cC$ because $\cC$ is extension-closed.
    This argument also proves the characterisation of exact sequences in $\idem{\cC}$.

    If $\widehat{u} \rightarrowtail \widehat{y} \twoheadrightarrow \widehat{w}$ is an exact sequence with $\widehat{u}, \widehat{w} \in \idem{\cU}$, then stabilising with appropriate objects from $\idem{\cU}$ as before shows that $\widehat{y} \in \idem{\cU}$ and exhibits the given sequence as a retract of an exact sequence in $\cU$.
\end{proof}

\begin{cor}
    For every exact category $\cC$ and every idempotent complete exact category $\cD$, the restriction functor
    \[ \Funex{}(\idem{\cC},\cD) \to \Funex{}(\cC,\cD) \]
    is an equivalence.
    In particular, the inclusion $\exactperf \to \exact$ of the full subcategory of idempotent complete exact categories admits a left adjoint
    \[ \idem{(-)} \colon \exact \to \exactperf. \]
\end{cor}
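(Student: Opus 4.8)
The plan is to reduce the statement to the universal property of the idempotent completion of an $\infty$-category, fed with \cref{lem:idem}. First, the ``in particular'' clause is formal: \cref{lem:idem} furnishes a canonical exact functor $\eta_\cC \colon \cC \to \idem{\cC}$ with $\idem{\cC}$ idempotent complete, and if precomposition with $\eta_\cC$ induces an equivalence $\Funex{}(\idem{\cC},\cD) \xrightarrow{\sim} \Funex{}(\cC,\cD)$ for all $\cD \in \exactperf$, then, passing to maximal subgroupoids, $\idem{\cC}$ corepresents the functor $\cD \mapsto \Map_{\exact}(\cC,\cD)$ on $\exactperf$; by the Yoneda lemma this exhibits $\idem{(-)}$ as a left adjoint to the inclusion $\exactperf \hookrightarrow \exact$, with unit $\eta_\cC$. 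So it remains to prove the equivalence of exact-functor categories.

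Recall that an exact $\infty$-category is idempotent complete if and only if its underlying $\infty$-category is, and that by \cref{lem:idem} the functor $\eta_\cC$ identifies $\idem{\cC}$ with the retract-closure of $\cC$ inside $\psh_\lex(\cC) \subseteq \psh(\cC)$, hence with the idempotent completion of the underlying $\infty$-category of $\cC$ in the sense of \cite[Section~5.1.4]{HTT}. Consequently restriction along $\eta_\cC$ is already an equivalence $\Fun(\idem{\cC},\cD) \xrightarrow{\sim} \Fun(\cC,\cD)$ of $\infty$-categories of all functors whenever $\cD$ is idempotent complete. Since the exact functors span full subcategories on either side and $\eta_\cC$ is exact, this restricts to a fully faithful functor $\Funex{}(\idem{\cC},\cD) \to \Funex{}(\cC,\cD)$; the remaining point is essential surjectivity, i.e.\ that the unique extension $\hat f \colon \idem{\cC} \to \cD$ of an exact functor $f \colon \cC \to \cD$ is exact. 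As $0 \in \cC$ and every finite coproduct in $\idem{\cC}$ is, as computed in $\psh_\lex(\cC)$, the retract of a finite coproduct in $\cC$ cut out by an explicit idempotent, and since $\hat f$ restricts to $f$ and preserves retracts, $\hat f$ preserves the zero object and finite coproducts. By \cref{lem:idem}~(2), an exact sequence of $\idem{\cC}$ is a retract of an exact sequence of $\cC$; applying $\hat f$ and using $\hat f \circ \eta_\cC \simeq f$, it is carried to a retract of an exact sequence in $\cD$. Granting that such a retract is again exact, we deduce that $\hat f$ sends exact sequences to exact sequences, and hence is exact---the remaining conditions (preservation of pushouts along ingressives and pullbacks along egressives) follow from a short five-lemma argument carried out after the fully faithful exact embedding $\cD \hookrightarrow \psh_\lex(\cD)$.

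The one genuinely nontrivial input is thus: \emph{in an idempotent complete exact $\infty$-category $\cD$, a retract of an exact sequence is exact}. I would derive this from the Gabriel--Quillen embedding (\cref{thm:gabriel-quillen}). The functor $\yo_\lex \colon \cD \to \psh_\lex(\cD)$ is exact and fully faithful and identifies $\cD$ with an extension-closed subcategory; moreover $\cD$ is idempotent complete and $\yo_\lex$ is fully faithful, so $\cD$ is closed under retracts in $\psh_\lex(\cD)$. A retract of an exact sequence of $\cD$ becomes, under $\yo_\lex$, a retract of a cofibre sequence in the prestable category $\psh_\lex(\cD)$; since a retract of a pushout square is a pushout square (the comparison map from the pushout is a retract of an equivalence, hence an equivalence), it is itself a cofibre sequence in $\psh_\lex(\cD)$ with all terms in $\cD$, and \cref{thm:gabriel-quillen}---which also yields that $\yo_\lex$ reflects exact sequences---shows it is exact in $\cD$. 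Alternatively, one may invoke the classical statement for exact $1$-categories and observe that its proof goes through. I expect this interplay between retracts and (co)fibre sequences to be the only real obstacle; everything else is bookkeeping with \cref{lem:idem} and the universal property of idempotent completion.
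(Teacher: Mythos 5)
Your proposal is correct and follows essentially the same route as the paper, whose proof is the one-line observation that the statement is immediate from the universal property of idempotent completion together with the characterisation of exact sequences in $\idem{\cC}$ from \cref{lem:idem}. You have simply filled in the details the paper leaves implicit, including the one genuinely non-formal point (that a retract of an exact sequence in an idempotent complete exact category is exact), which you correctly settle via the Gabriel--Quillen embedding and the fact that $\yo_\lex$ reflects exact sequences.
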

\begin{proof}
    This is immediate from the universal property of idempotent completion and the characterisation of exact sequences in $\idem{\cC}$ given in \cref{lem:idem}.
\end{proof}

To conclude this section, we will show that the formation of stable envelopes preserves idempotent completeness.
This can be conveniently shown using the notion of heart structures.

\begin{defn}[{Saunier \cite[Definition~2.3]{saunier:heart}}]
 A \emph{heart structure} on a stable category $\cC$ is a pair of full subcategories $(\cC_{\geq 0}, \cC_{\leq 0})$ satisfying the following properties:
 \begin{enumerate}
    \item the subcategory $\cC_{\geq 0}$ is closed under finite colimits and extensions;
    \item the subcategory $\cC_{\leq 0}$ is closed under finite limits and extensions;
    \item for every object $x \in \cC$ there exists a cofibre sequence
    \[ x_{\leq 0} \to x \to x_{\geq 1} \]
    such that $x_{\leq 0} \in \cC_{\leq 0}$ and $\Sigma^{-1}x_{\leq 1} \in \cC_{\geq 0}$.
 \end{enumerate}
 For $k \in \bbZ$, define $\cC_{\geq k} := \Sigma^n\cC_{\geq 0}$ and $\cC_{\leq k} := \Sigma^k(\cC_{\leq 0})$ and $\cC_{[k,l]} := \cC_{\geq k} \cap \cC_{\leq l}$.
 The \emph{heart} of the heart structure is defined to be
 \[ \cC^\heartsuit := \cC_{[0,0]}. \]
 The heart structure is \emph{bounded} if
 \[ \cC \simeq \bigcup_{k \in \bbN} \cC_{[-k,k]}. \]
 An exact functor $f \colon \cC \to \cD$ between stable categories with heart structures is \emph{heart-exact} if $f(\cC_{\geq 0}) \subseteq \cD_{\geq 0}$ and $f(\cC_{\leq 0}) \subseteq \cD_{\leq 0}$.
 
 Denote by $\catheart$ the category of small stable categories equipped with a bounded heart structure and heart-exact functors between them.
\end{defn}

The key observation is the following.

\begin{prop}[Sosnilo]\label{prop:sosnilo}
    Let $(\cC,\cC_{\geq 0},\cC_{\leq 0})$ be a stable category with a bounded heart structure.
    Then the following holds:
    \begin{enumerate}
        \item $(\idem{(\cC_{\geq 0})},\idem{(\cC_{\leq 0})})$ defines a bounded heart structure on $\idem{\cC}$;
        \item the induced functor $\cC^\heartsuit \to (\idem{\cC})^\heartsuit$ is an idempotent completion.
    \end{enumerate}
\end{prop}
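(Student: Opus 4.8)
The plan is to verify the three axioms of a heart structure for the pair $(\idem{(\cC_{\geq 0})},\idem{(\cC_{\leq 0})})$ inside $\idem{\cC}$, then check boundedness, and finally identify the heart. First I would observe that $\idem{(\cC_{\geq 0})}$, being the retract-closure of $\cC_{\geq 0}$ inside $\idem{\cC}$, is automatically closed under extensions (a retract of an extension is an extension) and under finite colimits; the same argument applied to $\cC_{\leq 0}$ (or to opposite categories) gives closure under finite limits and extensions for $\idem{(\cC_{\leq 0})}$. The only genuinely nontrivial axiom is the third: every object $\widehat{x} \in \idem{\cC}$ must fit into a cofibre sequence $\widehat{x}_{\leq 0} \to \widehat{x} \to \widehat{x}_{\geq 1}$ with $\widehat{x}_{\leq 0} \in \idem{(\cC_{\leq 0})}$ and $\Sigma^{-1}\widehat{x}_{\geq 1} \in \idem{(\cC_{\geq 0})}$. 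Here I would write $\widehat{x}$ as a retract of some $x \in \cC$, take the truncation cofibre sequence $x_{\leq 0} \to x \to x_{\geq 1}$ supplied by the heart structure on $\cC$, and then functorially retract it: the idempotent $e$ on $x$ splitting off $\widehat{x}$ induces compatible idempotents on $x_{\leq 0}$ and $x_{\geq 1}$ (by uniqueness of the truncation functors, or by the fact that $\cC_{\leq 0}$ is closed under the relevant pullbacks), and since $\idem{\cC}$ is idempotent complete these split, yielding the desired sequence. One must check the idempotent on $x_{\leq 0}$ lands in $\idem{(\cC_{\leq 0})}$ and similarly for $\Sigma^{-1}x_{\geq 1}$, which is immediate from the construction.

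Next, boundedness: if $\cC = \bigcup_k \cC_{[-k,k]}$, then any $\widehat{x}$, being a retract of an object $x \in \cC_{[-k,k]}$ for some $k$, is a retract of an object in $\cC_{[-k,k]}$, hence lies in $\idem{(\cC_{[-k,k]})} \subseteq (\idem{\cC})_{[-k,k]}$; this gives $\idem{\cC} = \bigcup_k (\idem{\cC})_{[-k,k]}$. For the second assertion, I would identify $(\idem{\cC})^\heartsuit = (\idem{\cC})_{[0,0]} = \idem{(\cC_{\geq 0})} \cap \idem{(\cC_{\leq 0})}$ and show this equals the retract-closure of $\cC^\heartsuit = \cC_{\geq 0} \cap \cC_{\leq 0}$ inside $\idem{\cC}$. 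The inclusion of the retract-closure of $\cC^\heartsuit$ into the intersection is clear. For the reverse, take $\widehat{y}$ that is simultaneously a retract of some $a \in \cC_{\geq 0}$ and a retract of some $b \in \cC_{\leq 0}$; one shows the splitting idempotent can be realised on an object of $\cC^\heartsuit$. Concretely, $\widehat{y} \in (\idem{\cC})_{[0,0]}$ means $\widehat{y} \to \tau_{\leq 0}\widehat{y}$ and $\tau_{\geq 0}\widehat{y} \to \widehat{y}$ are equivalences computed via the axiom-3 sequences above; tracing through the retraction $x = a \oplus a'$ with $\widehat{y}$ a summand and using that the heart-truncation of $x$ restricted to the summand $\widehat{y}$ is already $\widehat{y}$, one sees $\widehat{y}$ is a retract of $x^\heartsuit \in \cC^\heartsuit$. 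Hence $(\idem{\cC})^\heartsuit$ is precisely the retract-closure of $\cC^\heartsuit$, which is by definition the idempotent completion $\idem{(\cC^\heartsuit)}$, and the functor $\cC^\heartsuit \to (\idem{\cC})^\heartsuit$ is the canonical one.

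The main obstacle I anticipate is the bookkeeping in axiom 3 — specifically, checking that the idempotent on $x \in \cC$ is carried along the truncation cofibre sequence to genuine idempotents on $x_{\leq 0}$ and $x_{\geq 1}$, so that the retracts one splits off actually lie in $\idem{(\cC_{\leq 0})}$ and $\Sigma\cdot\idem{(\cC_{\geq 0})}$ respectively. The cleanest way around this is to note that the full subcategory of $\Fun(\Delta^2,\cC)$ on the truncation sequences is closed under retracts (because each term's membership condition is retract-stable and the "$\tau_{\leq 0}$"/"$\tau_{\geq 1}$" conditions are detected by mapping-space vanishing which passes to retracts), so an idempotent on the sequence $x_{\leq 0}\to x\to x_{\geq 1}$ splits termwise in $\idem{\cC}$ into a cofibre sequence of the required shape. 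Once this is in place, the remaining verifications are formal consequences of idempotent-completeness and retract-stability, and one should also remark that the construction is functorial enough that heart-exact functors are sent to heart-exact functors, which is what makes $\idem{(-)}$ a well-defined endofunctor on $\catheart$.
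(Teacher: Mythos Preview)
Your proposal has a genuine gap in the verification of axiom~3, and it stems from conflating heart structures with t-structures. In a heart structure (also known as a weight structure), the decomposition $x_{\leq 0} \to x \to x_{\geq 1}$ is \emph{not} unique and there are \emph{no} truncation functors: the definition in the paper requires only the existence of such a cofibre sequence, and there is no orthogonality condition of the form $\Hom(\cC_{\geq 1},\cC_{\leq 0}) = 0$. Consequently, your appeals to ``uniqueness of the truncation functors'' and to ``mapping-space vanishing'' are not available. In particular, an idempotent $e$ on $x$ has no reason to lift to compatible idempotents on a chosen decomposition, and the full subcategory of $\Fun(\Delta^2,\cC)$ on such sequences is not obviously closed under retracts in the sense you need (you would first have to produce a retract of the \emph{entire sequence}, not just of its middle term). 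The same issue undermines your argument for part~(2): the phrase ``the heart-truncation of $x$ restricted to the summand $\widehat{y}$'' presupposes functoriality that does not exist.

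The paper's proof circumvents this by an entirely different method. For axiom~3 it uses boundedness and an induction on the connectivity $k$ such that $x \oplus y \in \cC_{\geq k}$: one chooses a decomposition of $x \oplus y$ in $\cC$, forms a bicartesian square mixing this with the direct sum decomposition, and shows the resulting cofibre lies in $\idem{(\cC_{\geq k+1})}$, so that the inductive hypothesis applies. This actually yields the stronger conclusion that $x_{\leq 0}$ can be chosen in $\cC_{\leq 0}$ itself (not merely in its idempotent completion). For part~(2) the paper again avoids any functoriality by an explicit stabilisation argument, first treating objects of $\cC$ that happen to lie in $(\idem{\cC})^\heartsuit$, then reducing the general case to this one. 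Your closure-under-extensions and boundedness paragraphs are fine, but the core of the argument needs to be replaced by something along these lines.
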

\begin{proof}
    The full subcategory $\idem{(\cC_{\geq 0})}$ is closed under finite colimits, and $\idem{(\cC_{\leq 0})}$ is closed under finite limits.
    If $\cD \subseteq \cC$ is any extension-closed subcategory, then $\idem{\cD} \subseteq \idem{\cC}$ is also extension-closed by \cref{lem:idem}~\eqref{lem:idem-3}.

    Let now $x \in \idem{\cC}$ be arbitrary.
    We claim that there exists a cofibre sequence
    \[ x_{\leq 0} \to x \to x_{\geq 1} \]
    in $\idem{\cC}$ with $x_{\leq 0} \in \cC_{\leq 0}$ (without idempotent completion) and $x_{\geq 1} \in \idem{(\cC_{\geq 1})}$.
    
    Choose $y \in \idem{\cC}$ with $x \oplus y \in \cC$.
    Since the heart structure on $\cC$ is bounded, there exists some $k \in \bbZ$ with $x \oplus y \in \cC_{\geq k}$.
    If $k \geq 1$, then $0 \to x \oplus y \xrightarrow{\id} x \oplus y$ is a suitable cofibre sequence, so we only have to consider $k \leq 0$, where we prove the claim by induction.
    
    Choosing a decomposition of $x \oplus y$ in $\cC$ induces a bicartesian square
    \[\begin{tikzcd}
        (x \oplus y)_{\leq 0}\ar[r]\ar[d] & x\ar[d, "f"] \\
        y\ar[r, "g"] & (x \oplus y)_{\geq 1}
    \end{tikzcd}\]
    Set $x' := \cofib((x \oplus y)_{\leq 0} \to x)$.
    Since $x \oplus y \in \cC_{\geq k}$, we find that
    \[ x' \oplus \Sigma x \simeq \cofib(g+0 \colon y \oplus x \to (x \oplus y)_{\geq 1}) \in \cC_{\geq k+1}. \]
    So $x' \in \idem{(\cC_{\geq k+1})}$.
    By induction, $x'$ admits a decomposition
    \[ x'_{\leq 0} \to x' \to x'_{\geq 1} \]
    with $x'_{\leq 0} \in \cC_{\leq 0}$ and $x'_{\geq 1} \in \idem{(\cC_{\geq 1})}$.
    Consequently, the fibre of the composite map $p \colon x \to x' \to x'_{\geq 1}$ sits in a cofibre sequence $(x \oplus y)_{\leq 0} \to \fib(p) \to x'_{\leq 0}$, which shows that $\fib(p) \in \cC_{\leq 0}$.
    Consequently, $\fib(p) \to x \xrightarrow{p} x'_{\geq 1}$ is the required decomposition.

    Note that this argument can be dualised to show that every object $x \in \idem{\cC}$ also admits a decomposition
    \[ x_{\leq 0} \to x \to x_{\geq 1} \]
    in $\idem{\cC}$ with $x_{\leq 0} \in \idem{(\cC_{\leq 0})}$ and $x_{\geq 1} \in \cC_{\geq 1}$.
    
    It is immediate that $(\idem{\cC},\idem{(\cC_{\geq 0})},\idem{(\cC_{\leq 0})})$ is a bounded heart category.
    
    Note that $\cC^\heartsuit \to \cC \to \idem{\cC}$ induces a fully faithful exact functor $\idem{(\cC^\heartsuit)} \to \idem{\cC}$, which factors over $(\idem{\cC})^\heartsuit$ by definition of the heart structure on $\idem{\cC}$.
    We have to show that the resulting functor $\idem{(\cC^\heartsuit)} \to (\idem{\cC})^\heartsuit$ is essentially surjective.

    Consider first an object $x \in \cC$ with the property that $x \in (\idem{\cC})^\heartsuit$.
    Then there exist $y,z \in \cC$ such that $x \oplus y \in \cC_{\geq 0}$ and $x \oplus z \in \cC_{\leq 0}$.
    Choose decompositions
    \[ y_{\leq 0} \to y \to y_{\geq 1} \quad\text{and}\quad z_{\leq -1} \to z \to z_{\geq 0} \]
    with $y_{\leq 0} \in \cC_{\leq 0}$, $z_{\leq -1} \in \cC_{\leq -1}$, $y_{\geq 1} \in \cC_{\geq 1}$, and $z_{\geq 0} \in \cC_{\geq 0}$.
    Then we obtain cofibre sequences
    \[ \Sigma^{-1} y_{\geq 1} \to x \oplus y_{\leq 0} \to x \oplus y \quad\text{and}\quad x \oplus z \to x \oplus z_{\geq 0} \to \Sigma z_{\leq -1} \]
    showing that $x \oplus y_{\leq 0} \in \cC_{\geq 0}$ and $x \oplus z_{\geq 0} \in \cC_{\leq 0}$.
    Since $y_{\leq 0} \in \cC_{\leq 0}$, it follows that $x \oplus y_{\leq 0} \oplus z_{\geq 0} \in \cC_{\leq 0}$, and similarly $x \oplus y_{\leq 0} \oplus z_{\geq 0} \in \cC_{\geq 0}$ because $z_{\geq 0} \in \cC_{\geq 0}$.
    Hence $x$ is a summand of an object in $\cC^\heartsuit$.

    Given an arbitrary $x \in (\idem{\cC})^\heartsuit$, choose $y \in \idem{\cC}$ such that $x \oplus y \in \cC_{\leq 0}$ and pick a cofibre sequence $y_{\leq -1} \to y \to y_{\geq 0}$ with $y_{\leq -1} \in \cC_{\leq 0}$ and $y_{\geq 0} \in \idem{(\cC_{\geq 0})}$.
    Adding the cofibre sequence $0 \to x \xrightarrow{\id} x$, it follows that $x \oplus y_{\geq 0} \in \cC$.
    Moreover, $x \oplus y_{\geq 0} \in \idem{(\cC_{\geq 0})}$ because $x \in \idem{(\cC_{\geq 0})}$ by assumption.
    Now pick a cofibre sequence
    \[ (y_{\geq 0})_{\leq 0} \to y_{\geq 0} \to (y_{\geq 0})_{\geq 1} \]
    with $(y_{\geq 0})_{\leq 0} \in \idem{(\cC_{\leq 0})}$ and $(y_{\geq 0})_{\geq 1} \in \cC_{\geq 1}$.
    Then $(y_{\geq 0})_{\leq 0} \in (\idem{\cC})^\heartsuit$, and the induced cofibre sequence
    \[ x \oplus (y_{\geq 0})_{\leq 0} \to x \oplus y_{\geq 0} \to (y_{\geq 0})_{\geq 1} \]
    shows that $x \oplus (y_{\geq 0})_{\leq 0} \in \cC \cap (\idem{\cC})^\heartsuit$.
    By the preceding paragraph, this implies that $x \oplus (y_{\geq 0})_{\leq 0}$ is a retract of an object in $\cC^\heartsuit$, which implies that $x$ itself is a retract of an object in $\cC^\heartsuit$.
\end{proof}

This allows us to generalise \cite[Theorem~2.8]{bs:idempotent-completion} from exact 1-categories to arbitrary exact categories.

\begin{cor}\label{cor:derb-preserves-idempotent-complete}
    The exact functors $\derb(\idem{\cC}) \to \idem{\derb(\cC)}$ induced by the universal property of the stable envelope assemble to a natural equivalence
    \[ \derb \circ \idem{(-)} \xrightarrow{\sim} \idem{(-)} \circ \derb. \]
\end{cor}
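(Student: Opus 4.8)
The plan is to reduce the statement to two facts: that the presentable stable envelope does not distinguish an exact category from its idempotent completion, and that $\derb(\idem{\cC})$ is already idempotent complete. Granting these, the equivalence is essentially formal, and the real work sits in the second fact, where \cref{prop:sosnilo} enters. For the first fact: since $\psh_\lex^\st(\cD)$ is presentable, hence idempotent complete, the universal property of the idempotent completion of exact categories (the corollary to \cref{lem:idem}) yields $\Funex{}(\idem{\cC},\cD) \simeq \Funex{}(\cC,\cD)$ for every presentable stable $\cD$. Together with \cref{cor:presentable-stable-hull} and the Yoneda lemma in $\PrL_\st$, this gives a natural equivalence $\psh_\lex^\st(\cC) \xrightarrow{\sim} \psh_\lex^\st(\idem{\cC})$ under $\cC$. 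Passing to compact objects and using \cref{cor:ind-derb-is-plex} (so that $(\psh_\lex^\st(\cC))^\omega \simeq \idem{\derb(\cC)}$ naturally) produces a natural equivalence $\idem{\derb(\cC)} \xrightarrow{\sim} \idem{\derb(\idem{\cC})}$ under $\idem{\cC}$, which moreover identifies the canonical comparison functor $\derb(\idem{\cC}) \to \idem{\derb(\cC)}$ of the statement with the idempotent completion $\derb(\idem{\cC}) \to \idem{\derb(\idem{\cC})}$. Hence it suffices to prove that $\derb(\idem{\cC})$ is idempotent complete.

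To prove this, write $\cF := \idem{\cC}$, so $\cF$ is idempotent complete, and recall the canonical bounded heart structure on the stable envelope $\derb(\cF)$ whose heart is $\cF$ (Klemenc's weight structure, recast in the language of \cite{saunier:heart}). By \cref{prop:sosnilo}~(2) applied to $\derb(\cF)$, the induced functor on hearts $\derb(\cF)^\heartsuit \to (\idem{\derb(\cF)})^\heartsuit$ is an idempotent completion; but its source is $\cF$, which is already idempotent complete, so this functor is an equivalence. Now $\iota \colon \derb(\cF) \hookrightarrow \idem{\derb(\cF)}$ is fully faithful, and by \cref{prop:sosnilo}~(1) it is heart-exact for the induced bounded heart structure on $\idem{\derb(\cF)}$, whose heart is again $\cF$. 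A short dévissage shows $\iota$ is essentially surjective: given $y \in \idem{\derb(\cF)}$, boundedness places $y$ in $\idem{\derb(\cF)}_{[a,b]}$ for some $a \le b$, and one inducts on $b-a$; the case $a=b$ puts $y$ in the shifted heart $\Sigma^{a}\cF \subseteq \derb(\cF)$, while for $a<b$ a shifted axiom-(3) decomposition $y_{\le b-1} \to y \to y_{\ge b}$ has $y_{\ge b} \in \Sigma^{b}\cF \subseteq \derb(\cF)$ and $y_{\le b-1} \in \idem{\derb(\cF)}_{[a,b-1]} \subseteq \derb(\cF)$ by the inductive hypothesis, whence $y = \cofib(\Sigma^{-1}y_{\ge b} \to y_{\le b-1}) \in \derb(\cF)$. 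Thus $\derb(\cF) = \idem{\derb(\cF)}$ is idempotent complete.

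Combining the two steps, the natural equivalence $\idem{\derb(\cC)} \simeq \idem{\derb(\idem{\cC})} = \derb(\idem{\cC})$ from the first step is the desired natural equivalence $\idem{(-)} \circ \derb \simeq \derb \circ \idem{(-)}$, and the identification of the comparison functor in the first step shows it agrees with the one induced by the universal property of the stable envelope. (One can also reduce the idempotent-completeness of $\derb(\cF)$ to that of $\derb_{\ge 0}(\cF)$ by noting that Spanier--Whitehead stabilisation preserves idempotent completeness, since the canonical functor $\cE \to \stab(\cE)$ is fully faithful and every object of $\stab(\cE)$ is a shift of one from $\cE$; this is a convenience, not a necessity.)

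The main obstacle is the second step, i.e.\ the idempotent completeness of $\derb(\idem{\cC})$. The difficulty is that heart structures — unlike $t$-structures — carry no functorial truncations, so one cannot split an idempotent simply by truncating it; the purpose of invoking \cref{prop:sosnilo} is precisely that it transports the heart structure to the idempotent completion, after which the dévissage from the heart (which here coincides with the already idempotent-complete category $\cF$) is routine. A secondary point needing care is the availability of the bounded heart structure on $\derb(\cF)$ with heart $\cF$; if one prefers not to cite it, the same dévissage can be run directly with Klemenc's bounded weight structure, for which the analogue of \cref{prop:sosnilo} holds by Sosnilo's argument verbatim.
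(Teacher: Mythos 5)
Your argument is correct, and its essential engine is the same as the paper's: everything comes down to showing that $\derb$ of an idempotent complete exact category is itself idempotent complete, and this is extracted from \cref{prop:sosnilo} via the bounded heart structure on the stable envelope. The differences are in the two formal brackets around that core. For the reduction, the paper simply observes that the inclusion $\catperf \to \exact$ factors through both $\catst$ and $\exactperf$, so uniqueness of left adjoints identifies $\idem{(-)} \circ \derb$ with $\derb \circ \idem{(-)}$ once $\derb$ is known to preserve $\exactperf$; you instead pass through $\psh_\lex^\st(\cC) \simeq \psh_\lex^\st(\idem{\cC})$ and take compact objects. Both work, and yours has the small virtue of making the identification of the comparison functor "under $\idem{\cC}$" completely explicit, at the cost of a mild size caveat when applying the universal property of $\idem{(-)}$ to the large target $\psh_\lex^\st(\cD)$ (the paper's own device of exhausting by small subcategories handles this). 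For the conclusion, the paper cites the recognition theorem \cite[Theorem~1.6]{sw:exact-stable} twice — once to know $\derb(\cF)^\heartsuit \simeq \cF$ and once to upgrade the equivalence of hearts to an equivalence of bounded heart categories — whereas you reprove the second application by a direct dévissage from the heart. Your dévissage is sound (the observation that $y_{\geq b}$ lands in $\cC_{[b,b]}$ because $\cC_{\leq b}$ is extension-closed is exactly the point that makes it work), and it makes the argument more self-contained; but note that you still implicitly rely on the external input that $\derb(\cF)$ carries a bounded heart structure with heart $\cF$, which is precisely the content of the first citation of \cite[Theorem~1.6]{sw:exact-stable}, so the dependence on that reference is reduced rather than eliminated.
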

\begin{proof}
    There is a commutative square
    \[\begin{tikzcd}
        \catperf\ar[r]\ar[d] & \exactperf\ar[d] \\
        \catst\ar[r] & \exact
    \end{tikzcd}\]
    in which $\exactperf$ denotes the full subcategory of idempotent complete exact categories, and all arrows are the obvious inclusion functors.
    The bottom horizontal functor admits a left adjoint by \cref{cor:derb-left-adjoint}, so it suffices to show that $\derb(\cC)$ is idempotent complete if $\cC$ is idempotent complete.

    For any exact category $\cC$, the induced functor $i \colon \derb(\cC)^\heartsuit \to (\idem{\derb(\cC)})^\heartsuit$ is an idempotent completion by \cref{prop:sosnilo}.
    Since $\cC$ is idempotent complete, it follows from \cite[Theorem~1.6]{sw:exact-stable} that $\derb(\cC)^\heartsuit \simeq \cC$ is idempotent complete, so $i$ is an equivalence.
    Invoking \cite[Theorem~1.6]{sw:exact-stable} a second time, it follows that $\derb(\cC) \to \idem{\derb(\cC)}$ is an equivalence as required.
\end{proof}

\section{Tensor products of exact categories}\label{sec:monoidal}
In this section, we use the Gabriel--Quillen embedding to show that the internal hom of exact categories admits a left adjoint.
The resulting tensor product of exact categories refines to a symmetric monoidal structure on the category of small exact categories.
Moreover, we show that both the Gabriel--Quillen embedding and the stable envelope construction admit symmetric monoidal refinements.

Let us first recall the internal hom of exact categories.

\begin{defn}
 Let $\cC$ and $\cD$ be small exact categories.
 Define a morphism $\tau \colon F \Rightarrow G$ in $\Fun^\exct(\cC,\cD)$ to be ingressive if the component $\tau_x \colon F(x) \to G(x)$ is ingressive in $\cD$ for all $x \in \cC$.
\end{defn}

\begin{lem}\label{lem:fun-exct-exact}
    The pair $(\Fun^\exct(\cC,\cD),\Fun^\exct(\cC,\cD)_\ing)$ is an exact category.
\end{lem}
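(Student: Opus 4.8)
The plan is to deduce the lemma from the Gabriel--Quillen embedding theorem by embedding $\cD$ into a prestable category. By \cref{thm:gabriel-quillen}, the Yoneda embedding $\yo_\lex\colon\cD\to\psh_\lex(\cD)$ is a fully faithful exact functor identifying $\cD$ with an extension-closed subcategory of the prestable category $\psh_\lex(\cD)$; in particular, since a sequence in an extension-closed subcategory of a prestable category is exact precisely when its image is a cofibre sequence, a functor $\cC\to\cD$ is exact if and only if the composite $\cC\to\psh_\lex(\cD)$ is exact. Hence postcomposition with $\yo_\lex$ exhibits $\Fun^\exct(\cC,\cD)$ as the full subcategory of $\Fun^\exct(\cC,\psh_\lex(\cD))$ spanned by those exact functors which take values in $\cD$.

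First I would check that $\Fun^\exct(\cC,\psh_\lex(\cD))$ is prestable. By \cref{thm:plex}~\eqref{thm:plex-5} it coincides with the full subcategory of the prestable category $\Fun(\cC,\psh_\lex(\cD))$ spanned by the functors that preserve finite coproducts and send exact sequences to cofibre sequences (equivalently, it is $\Fun^{\Ldec}(\psh_\lex(\cC),\psh_\lex(\cD))$). A pointwise finite limit or colimit of such functors again has these properties --- here one uses that finite products and finite coproducts agree in $\psh_\lex(\cD)$, and that (co)fibres of a map of (co)fibre sequences form a (co)fibre sequence --- so $\Fun^\exct(\cC,\psh_\lex(\cD))$ is closed under finite limits and colimits in $\Fun(\cC,\psh_\lex(\cD))$ and is therefore itself prestable.

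Next I would verify that $\Fun^\exct(\cC,\cD)$ is extension-closed in $\Fun^\exct(\cC,\psh_\lex(\cD))$ and that the resulting exact structure is the claimed one. Cofibre sequences in $\Fun^\exct(\cC,\psh_\lex(\cD))$ are computed pointwise, so in a cofibre sequence $F\to G\to H$ with $F,H$ pointwise valued in $\cD$ each $G(x)$ is an extension of $H(x)$ by $F(x)$ and hence lies in $\cD$; by the previous paragraph $G$ then lies in $\Fun^\exct(\cC,\cD)$, proving extension-closure. Moreover, the induced exact structure has as its ingressives exactly those $\tau\colon F\to G$ whose cofibre lies in $\Fun^\exct(\cC,\cD)$; since this cofibre is computed pointwise, this says $\cofib(\tau_x)\in\cD$ for every $x$, i.e.\ that $\tau$ is pointwise ingressive. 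Thus \cref{thm:gabriel-quillen}, applied to the inclusion $\Fun^\exct(\cC,\cD)\hookrightarrow\Fun^\exct(\cC,\psh_\lex(\cD))$, shows that $(\Fun^\exct(\cC,\cD),\Fun^\exct(\cC,\cD)_\ing)$ is an exact category.

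The only step that is not purely formal is the prestability of $\Fun^\exct(\cC,\psh_\lex(\cD))$; the remainder is bookkeeping with the exact structure on an extension-closed subcategory of a prestable category. One could instead argue directly --- avoiding the Gabriel--Quillen embedding --- that a pointwise pushout of exact functors along a pointwise-ingressive transformation is again an exact functor (and dually for pullbacks along pointwise-egressive transformations), but this would require a handful of auxiliary stability lemmas for exact categories and seems less transparent.
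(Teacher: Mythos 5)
Your proof is correct, but it takes a genuinely different route from the paper. The paper argues directly inside $\Fun(\cC,\cD)$: it computes the cofibre of a pointwise-ingressive transformation in $\Fun(\cC,\cD)$, uses a Reedy-cofibrancy argument to check that this cofibre is again an exact functor with the right ingressives, and then deduces the remaining axioms (existence of pushouts along ingressives, pullbacks along egressives) formally. You instead bootstrap from \cref{thm:gabriel-quillen}: embed $\cD$ into $\psh_\lex(\cD)$, observe that $\Fun^\exct(\cC,\psh_\lex(\cD))\simeq\Fun^{\Ldec}(\psh_\lex(\cC),\psh_\lex(\cD))$ is prestable, check that $\Fun^\exct(\cC,\cD)$ is an extension-closed subcategory with the pointwise ingressives as the induced exact structure, and invoke the implication \eqref{it:gabriel-quillen3}$\Rightarrow$\eqref{it:gabriel-quillen1}. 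Since \cref{thm:gabriel-quillen} is proved before and independently of this lemma, there is no circularity, and your route gets all the closure axioms for free; the paper's proof is more elementary and self-contained, and in particular does not need the Yoneda embedding to reflect exact sequences.

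One inaccuracy worth flagging: your claim that $\Fun^\exct(\cC,\psh_\lex(\cD))$ is closed under pointwise \emph{finite limits} is dubious. In a prestable category a cofibre sequence is a fibre sequence but not conversely, so a pointwise finite limit of functors sending exact sequences to cofibre sequences is only guaranteed to send them to fibre sequences; the parenthetical ``fibres of a map of cofibre sequences form a cofibre sequence'' fails outside the stable setting. Fortunately this step is not needed: prestability of a full subcategory of a stable (or prestable) category follows from closure under finite colimits and extensions alone, and both of these you do verify (pointwise cofibres and pointwise extensions of exact functors are exact, by commuting colimits and the three-by-three argument). With that claim excised, the argument is complete.
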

\begin{proof}
    Let $\iota \colon F \Rightarrow G$ be ingressive in $\Fun^\exct(\cC,\cD)$.
    The cofibre $H := \cofib(\iota)$ exists in $\Fun(\cC,\cD)$, and we have to show that it is exact.
    If $i \colon x \rightarrowtail y$ is ingressive in $\cC$, then both $\iota_x$ and $\iota_{\cofib(i)}$ are ingressive, so \cite[Lemma~2.4]{sw:exact-stable} implies that the square
    \[\begin{tikzcd}
        F(x)\ar[r, tail, "\iota_x"]\ar[d, tail, "F(i)"'] & G(x)\ar[d, tail, "G(i)"] \\
        F(y)\ar[r, tail , "\iota_y"] & G(y) 
    \end{tikzcd}\]
    is Reedy cofibrant in the sense that $F(y) \cup_{F(x)} G(x) \to G(y)$ ingressive.
    Applying \cite[Lemma~2.4]{sw:exact-stable} a second time, it follows that $H(i) \colon H(x) \to H(y)$ is ingressive in $\cD$.
    Since colimits commute, the functor $H$ also preserves pushouts along ingressives, so it is exact.
    Moreover, the sequence $F \overset{\iota}{\Rightarrow} G \Rightarrow H$ is also a fibre sequence because it is so pointwise.
    
    It follows formally that every span $F_2 \overset{\tau}{\Leftarrow} F_0 \overset{\iota}{\Rightarrow} F_1$ with $\iota$ ingressive admits a pushout, that the induced morphism $F_2 \Rightarrow F_2 \cup_{F_0} F_1$ is ingressive, and that the resulting square is also a pullback.

    Suppose now that $F_0 \overset{\iota}{\Rightarrow} F_1 \overset{\pi}{\Rightarrow} F_2$ is a cofibre sequence with $\iota$ ingressive and let $\tau \colon G \Rightarrow F_2$ be arbitrary.
    Then the pullback $G \times_{F_2} F_1$ exists and fits into a cofibre sequence $F_0 \overset{\iota'}{\Rightarrow} G \times_{F_2} F_1 \Rightarrow G$ such that $\iota'$ is ingressive.
    Since the category $\Fun^\exct(\cC,\cD)$ is evidently additive, it follows that $\Fun^\exct(\cC,\cD)$ is exact.
\end{proof}

\begin{rem}\label{rem:fun-biex}
    The exact structure of \cref{lem:fun-exct-exact} has the property that currying induces an equivalence
    \[ \Fun^\biex(\cC_1 \times \cC_2,\cD) \simeq \Fun^\exct(\cC_1, \Fun^\exct(\cC_2,\cD)), \]
    where the left hand side denotes the category of functors which are exact in both variables.

    In particular, for $\cD$ Grothendieck prestable we obtain equivalences
    \begin{align*}
        \Fun^\biex(\cC_1 \times \cC_2,\cD)
        &\simeq \Fun^\Ldec(\psh_\lex(\cC_1), \Fun^\Ldec(\psh_\lex(\cC_2),\cD)) \\
        &\simeq \Fun^\Ldec(\psh_\lex(\cC_1) \otimes \psh_\lex(\cC_2), \cD).
    \end{align*}
\end{rem}

\cref{rem:fun-biex} already hints at what the construction of the tensor product should be.

\begin{const}\label{const:tenor}
 Let $\cC$ and $\cD$ be small  exact categories.
 The composite
 \[ y \colon \cC \times \cD \xrightarrow{\yo_\lex \times \yo_\lex} \psh_\lex(\cC) \times \psh_\lex(\cD) \to \psh_\lex(\cC) \otimes \psh_\lex(\cD) \]
 defines a biexact functor whose target is Grothendieck prestable by \cite[Theorem~C.4.2.1]{SAG}.

 Denote by $\cC \otimes \cD$ the smallest extension-closed subcategory of $\psh_\lex(\cC) \otimes \psh_\lex(\cD)$ containing the essential image of $y$.
 By construction, the functor $y$ induces a biexact functor
 \[ t \colon \cC \times \cD \to \cC \otimes \cD. \]
\end{const}

The following statement is the main ingredient in showing that this tensor product satisfies the desired universal property.

\begin{prop}\label{prop:tensor-univ-prop}
    The functors
    \[ I \colon \psh_\lex(\cC \otimes \cD) \to \psh_\lex(\cC) \otimes \psh_\lex(\cD) \]
    induced by the inclusion $i \colon \cC \otimes \cD \to \psh_\lex(\cC) \otimes \psh_\lex(\cD)$ and the functor
    \[ T \colon \psh_\lex(\cC) \otimes \psh_\lex(\cD) \to \psh_\lex(\cC \otimes \cD) \]    
    induced by $\yo_\lex \circ t \colon \cC \times \cD \to \psh_\lex(\cC \otimes \cD)$ are mutually inverse equivalences.

    Moreover, restriction along $t$ induces an equivalence
    \[ \Fun^\exct(\cC \otimes \cD,\cE) \xrightarrow{\sim} \Fun^\biex(\cC \times \cD, \cE) \]
    for every exact category $\cE$.
\end{prop}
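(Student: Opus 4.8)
The plan is to prove the two equivalences in tandem, using the universal properties already established. First I would set up the comparison maps carefully: $I$ is the colimit-preserving extension of $i \colon \cC \otimes \cD \hookrightarrow \psh_\lex(\cC) \otimes \psh_\lex(\cD)$ along $\yo_\lex$ (which exists by \cref{thm:plex}~\eqref{thm:plex-5}, the target being Grothendieck prestable), while $T$ is the colimit-preserving extension along $\yo_\lex(\cC) \times \yo_\lex(\cD)$ of the biexact functor $\yo_\lex \circ t$; that $T$ exists and is well-defined uses \cref{rem:fun-biex}, which identifies colimit-preserving functors out of $\psh_\lex(\cC) \otimes \psh_\lex(\cD)$ with biexact functors out of $\cC \times \cD$. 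Both $I$ and $T$ preserve colimits by construction.

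Next I would check $I \circ T \simeq \id$ and $T \circ I \simeq \id$ by reducing to generators. Since $\psh_\lex(\cC) \otimes \psh_\lex(\cD)$ is generated under colimits by the objects $\yo_\lex(x) \boxtimes \yo_\lex(d)$ (image of $y$), it suffices to check $I \circ T$ on these, where both sides are $\yo_\lex(x) \boxtimes \yo_\lex(d)$ by the defining extension properties. For $T \circ I \simeq \id$ on $\psh_\lex(\cC \otimes \cD)$, I would argue that $\psh_\lex(\cC \otimes \cD)$ is generated under colimits by the representables $\yo_\lex(c \otimes d)$ for $(c,d) \in \cC \times \cD$ — here I use that $\cC \otimes \cD$ is, by \cref{const:tenor}, generated under extensions (hence in particular as a full subcategory closed under the relevant operations) by the image of $t$, together with \cref{prop:pshlex-waldhausen} that $\psh_\lex$ is compactly generated by the image of $\yo_\lex$ — and then both $T \circ I$ and $\id$ send $\yo_\lex(t(c,d))$ to $\yo_\lex(t(c,d))$. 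The subtlety I would need to handle is that ``generated under colimits by the image of $\yo_\lex(\text{image of }t)$'' requires knowing that $\psh_\lex$ applied to an extension-closed inclusion behaves well; this is exactly the kind of statement used in the proof of \cref{prop:derb-pos-left-adjoint}, so I would lean on the same mechanism (the image of $t$ generates $\cC \otimes \cD$ under finite colimits and extensions, and $\yo_\lex$ is exact, so $\{\yo_\lex(t(c,d))\}$ generates $\psh_\lex(\cC\otimes\cD)$ under colimits).

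For the second statement, I would chain the universal properties. For any exact category $\cE$, first reduce to $\cE$ Grothendieck prestable: by the same small-prestable-subcategory colimit argument as in \cref{prop:derb-pos-left-adjoint} (every exact functor lands in a small prestable subcategory), it suffices to verify the equivalence after embedding $\cE \hookrightarrow \psh_\lex(\cE)$, i.e. to treat Grothendieck prestable targets, and then pass back. Wait — more directly: restriction along $\yo_\lex$ gives $\Fun^\exct(\cC\otimes\cD,\cE) \simeq \Fun^\exct(\cC\otimes\cD, \psh_\lex(\cE))^{?}$ is not immediate, so instead I would argue: since $t \colon \cC\times\cD \to \cC\otimes\cD$ is a biexact functor into an exact category, restriction gives a functor $\Fun^\exct(\cC\otimes\cD,\cE) \to \Fun^\biex(\cC\times\cD,\cE)$; to invert it, given a biexact $\cC\times\cD \to \cE$, compose with $\cE \hookrightarrow \psh_\lex(\cE)$, extend to a colimit-preserving functor $\psh_\lex(\cC)\otimes\psh_\lex(\cD) \to \psh_\lex(\cE)$ via \cref{rem:fun-biex}, precompose with $i$ to get $\cC\otimes\cD \to \psh_\lex(\cE)$; this is exact, and since $\cC\otimes\cD$ is extension-closed and generated by $t(\cC\times\cD)$ whose image lies in $\cE \subseteq \psh_\lex(\cE)$, and $\cE$ is extension-closed in $\psh_\lex(\cE)$ by \cref{thm:gabriel-quillen}, the functor factors through $\cE$. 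Uniqueness follows because two exact functors $\cC\otimes\cD \to \cE$ agreeing on $t(\cC\times\cD)$ agree on the extension-closure, hence on all of $\cC\otimes\cD$; fully faithfulness of the restriction functor on morphism-spaces is handled the same way, reducing mapping anima to the generating objects and invoking \cref{thm:plex}~\eqref{thm:plex-5} plus \cref{rem:fun-biex}.

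The main obstacle I anticipate is the bookkeeping around ``$\psh_\lex(\cC\otimes\cD)$ is generated under colimits by $\{\yo_\lex(t(c,d))\}$'' and the dual fact that $\cC\otimes\cD$ being the extension-closure of $t(\cC\times\cD)$ suffices to conclude that exact functors are determined by their restriction along $t$ — in other words, transporting the statement ``generated under finite colimits and extensions'' through $\psh_\lex$ and $-\otimes-$ cleanly. This is precisely the point where the proof of \cref{prop:derb-pos-left-adjoint} (left specialness, fully faithful induced functors on $\psh_\lex$) is the template, and I expect the cleanest writeup to mirror that argument rather than re-derive it from scratch.
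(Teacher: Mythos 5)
Your setup of $I$ and $T$, the verification of $I\circ T\simeq\id$ on the generators $y(c,d)$, and the reduction of the second statement to Grothendieck prestable targets via $\cE\hookrightarrow\psh_\lex(\cE)$ all match the paper. The genuine gap is in your argument for $T\circ I\simeq\id$. First, the objects $\yo_\lex(t(c,d))$ do not generate $\psh_\lex(\cC\otimes\cD)$ under colimits alone: $\cC\otimes\cD$ is the closure of the image of $t$ under \emph{extensions}, and in a prestable category the middle term of a cofibre sequence is not a colimit of the outer two terms (one cannot desuspend), so at best you get generation under colimits \emph{and} extensions. More seriously, even granting generation, ``both functors send $\yo_\lex(t(c,d))$ to $\yo_\lex(t(c,d))$'' does not yield $T\circ I\simeq\id$: the universal property of \cref{thm:plex}~\eqref{thm:plex-5} identifies colimit-preserving endofunctors of $\psh_\lex(\cC\otimes\cD)$ with exact functors out of \emph{all} of $\cC\otimes\cD$, so you must produce an equivalence $T\circ i\simeq\yo_\lex$ of exact functors on the whole extension-closure, and two exact functors agreeing on an extension-generating full subcategory need not agree --- the value on a middle term of an extension is not determined by the values on the outer terms. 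The paper closes exactly this gap with a different device: since $T$ preserves colimits and $\yo_\lex(\cC\otimes\cD)$ is extension-closed in $\psh_\lex(\cC\otimes\cD)$ by \cref{thm:gabriel-quillen}, the preimage $T^{-1}(\cC\otimes\cD)$ is extension-closed and contains the image of $y$, hence contains $i(\cC\otimes\cD)$; thus $Ti\simeq\yo_\lex f$ for some endofunctor $f$ of $\cC\otimes\cD$, and then $i\simeq ITi\simeq if$ together with the fact that the fully faithful $i$ is a monomorphism in $\catinf$ forces $f\simeq\id$. Without some such argument your first part is incomplete.

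The same issue resurfaces in your treatment of the second statement: ``two exact functors $\cC\otimes\cD\to\cE$ agreeing on $t(\cC\times\cD)$ agree on the extension-closure'' is not a valid principle, and likewise mapping anima of natural transformations cannot simply be ``reduced to the generating objects''. The correct route (and the paper's) is to note that postcomposition with $\cE\hookrightarrow\psh_\lex(\cE)$ exhibits both functor categories as the full subcategories of functors landing in $\cE$, that an exact functor out of $\cC\otimes\cD$ lands in the extension-closed subcategory $\cE$ if and only if its restriction along $t$ does (this part of your argument is correct), and that the ambient restriction functor $\Fun^\exct(\cC\otimes\cD,\psh_\lex(\cE))\to\Fun^\biex(\cC\times\cD,\psh_\lex(\cE))$ is an equivalence by \cref{thm:plex}~\eqref{thm:plex-5}, the first part of the proposition, and \cref{rem:fun-biex}. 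Full faithfulness of the restriction along $t$ is then inherited from this equivalence of ambient categories, not deduced from agreement on generators.
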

\begin{proof}
    By construction, $I \circ T \simeq \id$ because $y \simeq i \circ t$.

    Since $T$ is colimit-preserving and $\cC \otimes \cD$ is closed under extensions in $\psh_\lex(\cC \otimes \cD)$, the preimage $T^{-1}(\cC \otimes \cD)$ is an extension-closed subcategory of $\psh_\lex(\cC) \otimes \psh_\lex(\cD)$.
    From the definition of $\cC \otimes \cD$, it follows that $i(\cC \otimes \cD) \subseteq T^{-1}(\cC \otimes \cD)$, ie $Ti \simeq \yo_\lex f$ for some endofunctor $f$ of $\cC \otimes \cD$.
    Hence
    \[ i \simeq ITi \simeq I \yo_\lex f \simeq i f. \]
    Since $i$ is fully faithful, it is a monomorphism in $\catinf$, so $f \simeq \id$.
    It follows that $Ti \simeq \yo_\lex$, which implies that $TI \simeq \id$.

    Now consider the commutative square
    \[\begin{tikzcd}
        \Fun^\exct(\cC \otimes \cD,\cE)\ar[r]\ar[d] & \Fun^\biex(\cC \times \cD, \cE)\ar[d] \\
        \Fun^\exct(\cC \otimes \cD,\psh_\lex(\cE))\ar[r] & \Fun^\biex(\cC \times \cD, \psh_\lex(\cE))
    \end{tikzcd}\]
    The vertical arrows are induced by $\yo_\lex$, and therefore fully faithful.
    If an exact functor $\cC \otimes \cD \to \psh_\lex(\cE)$ restricts to a biexact functor $\cC \times \cD \to \cE$, its essential image is contained in $\cE$ since $\cE$ is closed under extensions in $\psh_\lex(\cE)$.
    Hence the above square is a pullback.

    The restriction map at the bottom identifies with the equivalence
    \[ \Fun^\Ldec(\psh_\lex(\cC) \otimes \psh_\lex(\cD), \psh_\lex(\cE)) \simeq \Fun^\biex(\cC \times \cD,\psh_\lex(\cE))\]
    of \cref{rem:fun-biex}.
\end{proof}

It is now fairly straightforward to construct a symmetric monoidal structure on $\exact$.

\begin{const}
 Let $\pair$ denote the category of small categories $\cC$ equipped with a choice of wide subcategory $\cC_0$.
 This category admits finite products, and therefore carries the cartesian symmetric monoidal structure $\pair^\times$.
 
 Define $\exact^\otimes$ to be the suboperad of $\pair^\times$ determined by the following properties:
 \begin{enumerate}
     \item objects in the underlying category are small exact categories;
     \item morphisms $(\cC_1, \cC_{1,\ing}) \boxtimes \ldots \boxtimes (\cC_n, \cC_{n,\ing}) \to (\cC, \cC_{\ing})$ correspond to functors $\cC_1 \times \ldots \times \cC_n \to \cC$ which are exact in each variable.
 \end{enumerate}
\end{const}

\begin{prop}\label{cor:exact-symm-monoidal}
    Let $\cO$ be an operad.
    \begin{enumerate}
        \item\label{cor:exact-symm-monoidal-1} The operad $\exact^\otimes$ determines a symmetric monoidal structure on $\exact$ with unit object $\Proj(\bbS)$.
        \item\label{cor:exact-symm-monoidal-2} The functors
        \[ \derb_{\geq 0} \colon \exact \to \catprst \quad\text{and}\quad \derb \colon \exact \to \catst \]
        refine to symmetric monoidal functors.
        \item\label{cor:exact-symm-monoidal-3} For every $\cO$-monoidal exact category $\cC$, the inclusion functor $\cC \to \derb_{\geq 0}(\cC)$ refines to an $\cO$-monoidal exact functor such that precomposition with this inclusion induces an equivalence
        \[ \Funex{\otimes}(\derb_{\geq 0}(\cC),\cD) \xrightarrow{\sim} \Funex{\otimes}(\cC,\cD) \]
        for every small, prestably $\cO$-monoidal category $\cD$.
        The analogous assertion for $\derb(\cC)$ and small, stably $\cO$-monoidal categories $\cD$ holds as well.
        \item\label{cor:exact-symm-monoidal-4} The functors
         \[ \psh_\lex \colon \exact \to \Groth \quad\text{and}\quad \psh_\lex^\st \colon \exact \to \PrL_\st \]
        refine to symmetric monoidal functors.
        \item\label{cor:exact-symm-monoidal-5} For every $\cO$-monoidal exact category $\cC$, the Yoneda embedding refines to an $\cO$-monoidal functor $\cC \to \psh_\lex(\cC)$ such that precomposition with this embedding induces an equivalence
        \[ \Fun_{\otimes}^\Ldec(\psh_\lex(\cC),\cD) \xrightarrow{\sim} \Funex{\otimes}(\cC,\cD). \]
        for every $\cO$-monoidal Grothendieck prestable category $\cD$.
        The analogous assertion for $\psh^\st_\lex(\cC)$ presentably $\cO$-monoidal stable categories $\cD$ holds as well.
    \end{enumerate}
\end{prop}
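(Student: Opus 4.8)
The strategy has four threads, one per clause of the statement.

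\textbf{Part \eqref{cor:exact-symm-monoidal-1}.} The plan is to show that the structure map $\exact^\otimes \to \Fin_*$ is a coCartesian fibration, using the standard corepresentability criterion: it suffices to produce, for each $n$ and each tuple $(\cC_1,\dots,\cC_n)$ of small exact categories, an object together with a multimorphism $(\cC_1,\dots,\cC_n) \to \cC_1 \otimes \dots \otimes \cC_n$ corepresenting the $n$-ary mapping category $\mathrm{Mul}_{\exact^\otimes}(\cC_1,\dots,\cC_n;-)$. By the definition of $\exact^\otimes$ as a suboperad of $\pair^\times$, this mapping category is the category of functors $\cC_1 \times \dots \times \cC_n \to -$ that are exact in each variable, and the $n$-fold iterate of \cref{prop:tensor-univ-prop} provides exactly such a corepresenting object; the coherent associativity and symmetry are inherited from the equivalences $\psh_\lex(\cC_1 \otimes \dots \otimes \cC_n) \simeq \psh_\lex(\cC_1) \otimes \dots \otimes \psh_\lex(\cC_n)$ and the symmetric monoidal structure on $\Groth$ from \cite[Theorem~C.4.2.1]{SAG}. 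Identifying the unit with $\Proj(\bbS)$ reduces to the fact that $\Proj(\bbS)$, equipped with its split exact structure, is the free additive category on one object, whence evaluation at $\bbS$ gives a natural equivalence $\Fun^\exct(\Proj(\bbS),\cE) \simeq \cE$; matching this with the identity multifunctor exhibits $\Proj(\bbS)$ as the tensor unit.

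\textbf{Parts \eqref{cor:exact-symm-monoidal-4} and \eqref{cor:exact-symm-monoidal-2}.} I would first refine $\psh_\lex$: sending a functor $\cC_1 \times \dots \times \cC_n \to \cE$ which is exact in each variable to its colimit-preserving extension $\psh_\lex(\cC_1) \otimes \dots \otimes \psh_\lex(\cC_n) \to \psh_\lex(\cE)$ is compatible with composition and so assembles into a map of $\infty$-operads $\psh_\lex \colon \exact^\otimes \to \Groth^\otimes$ (this uses \cref{rem:fun-biex} and its $n$-ary analogue). That this operad map is symmetric monoidal, not merely lax, is the content of the first half of \cref{prop:tensor-univ-prop}, namely $\psh_\lex(\cC \otimes \cD) \simeq \psh_\lex(\cC) \otimes \psh_\lex(\cD)$, together with $\psh_\lex(\Proj(\bbS)) \simeq \Sp_{\geq 0}$; then $\psh_\lex^\st = \Sp(-) \circ \psh_\lex$ is a composite of symmetric monoidal functors, using that stabilisation $\Sp \colon \Groth \to \PrL_\st$ is symmetric monoidal. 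For \eqref{cor:exact-symm-monoidal-2}, \cref{cor:ind-derb-is-plex} gives $\Ind \circ \derb_{\geq 0} \simeq \psh_\lex$ and $\Ind \circ \derb \simeq \psh_\lex^\st$; since $\Ind$ is symmetric monoidal and fully faithful onto the compactly generated objects, and $\derb_{\geq 0}, \derb$ land there, the symmetric monoidal structures descend uniquely along $\Ind$ to symmetric monoidal refinements of $\derb_{\geq 0}$ and $\derb$.

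\textbf{Parts \eqref{cor:exact-symm-monoidal-3} and \eqref{cor:exact-symm-monoidal-5}.} These I expect to follow formally from \eqref{cor:exact-symm-monoidal-1}, \eqref{cor:exact-symm-monoidal-2} and \eqref{cor:exact-symm-monoidal-4} via the standard principle that a symmetric monoidal functor which on underlying categories is a (relative) left adjoint induces, on $\cO$-algebras, a left adjoint to the functor induced along its lax symmetric monoidal right adjoint. For $\derb_{\geq 0}$ and $\derb$ one uses the genuine adjunctions of \cref{prop:derb-pos-left-adjoint} and \cref{cor:derb-left-adjoint} together with the fact that the inclusions $\catprst \hookrightarrow \exact$ and $\catst \hookrightarrow \exact$, being right adjoints of symmetric monoidal functors, are lax symmetric monoidal; for $\psh_\lex$ and $\psh^\st_\lex$ one uses the universal properties of \cref{thm:plex}~\eqref{thm:plex-5} and \cref{cor:presentable-stable-hull}. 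In each case the displayed restriction functor on $\cO$-monoidal functor categories is an equivalence because its underlying functor is (by those universal properties) and because $\cO$-monoidal exact, respectively colimit-preserving, functors into a target of the relevant type are detected on underlying functors.

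\textbf{Main obstacle.} The hard part is the coherence bookkeeping in \eqref{cor:exact-symm-monoidal-1} and \eqref{cor:exact-symm-monoidal-4}: upgrading the \emph{equivalences} of tensor products produced by \cref{prop:tensor-univ-prop} to an honest map of $\infty$-operads carrying all its higher coherences. I expect the cleanest route is to exhibit $\exact^\otimes$, and with it the monoidality of $\psh_\lex$, as a localisation of the Day-convolution symmetric monoidal structure on additive presheaves, so that everything reduces to checking that the localisation $\LEx$ is compatible with this tensor product — equivalently, that $\ker(\LEx)$ is stable under tensoring with arbitrary objects — which in turn follows from the generation statement \cref{thm:plex}~\eqref{thm:plex-4}.
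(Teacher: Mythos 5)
Your outline for parts \eqref{cor:exact-symm-monoidal-1}, \eqref{cor:exact-symm-monoidal-3} and \eqref{cor:exact-symm-monoidal-5} matches the paper: part \eqref{cor:exact-symm-monoidal-1} is exactly the corepresentability of the multimorphism categories supplied by (the $n$-ary iterate of) \cref{prop:tensor-univ-prop} together with the identification of $\Proj(\bbS)$ as the free additive category on a point, and parts \eqref{cor:exact-symm-monoidal-3} and \eqref{cor:exact-symm-monoidal-5} are the formal passage to $\cO$-algebras along the relevant adjunctions. Where you genuinely diverge is in the order of \eqref{cor:exact-symm-monoidal-2} and \eqref{cor:exact-symm-monoidal-4} and in the device used to produce the coherences. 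The paper goes the other way around: $(\catprst)^\otimes$ and $(\catst)^\otimes$ are \emph{full suboperads} of $\exact^\otimes$, so the inclusions are lax symmetric monoidal for free; the left adjoint $\derb_{\geq 0}$ then carries a canonical \emph{oplax} symmetric monoidal structure by \cite[Proposition~A]{hhln:lax-adjunctions}, which is upgraded to a strict one by checking that the comparison maps $\derb_{\geq 0}(\Proj(\bbS)) \to \Sp^\omega_{\geq 0}$ and $\derb_{\geq 0}(\cC_1 \otimes \cC_2) \to \derb_{\geq 0}(\cC_1) \otimes \derb_{\geq 0}(\cC_2)$ are equivalences; part \eqref{cor:exact-symm-monoidal-4} is then obtained by composing with the symmetric monoidal $\Ind$ and invoking \cref{cor:ind-derb-is-plex}. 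This sidesteps entirely the hand-construction of an operad map $\exact^\otimes \to \Groth^\otimes$ that your version of \eqref{cor:exact-symm-monoidal-4} begins with.

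Two points in your version need repair. First, descending the symmetric monoidal structure from $\psh_\lex$ to $\derb_{\geq 0}$ along $\Ind$ is not automatic: $\Ind$ is not fully faithful on non-idempotent-complete inputs (it identifies a small category with its idempotent completion, and a colimit-preserving functor between ind-completions restricts only to a functor into the idempotent completion of the target). So you must actually verify that the structure maps of the monoidal structure on $\psh_\lex$ carry $\derb_{\geq 0}(\cC_1) \otimes \derb_{\geq 0}(\cC_2)$ onto $\derb_{\geq 0}(\cC_1 \otimes \cC_2)$ on the nose --- this is precisely the ``comparison of universal properties'' step in the paper, and it cannot be waved away as uniqueness of descent. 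Second, and more substantively, your proposed fix for the main obstacle is aimed at the wrong target: Day convolution plus compatibility of the localisation $\LEx$ with the tensor product produces the $\cO$-monoidal structure on $\psh_\lex(\cC)$ for a \emph{fixed} $\cO$-monoidal exact category $\cC$ (i.e.\ part \eqref{cor:exact-symm-monoidal-5}), not the symmetric monoidal structure on the functor $\psh_\lex \colon \exact \to \Groth$ itself, whose ``colours'' are categories rather than objects of a single monoidal category. For \eqref{cor:exact-symm-monoidal-4} you still need a mechanism that supplies all higher coherences of the operad map; the adjoint-functor result of \cite{hhln:lax-adjunctions} applied to $\derb_{\geq 0} \dashv \inc$ is the paper's mechanism, and some such input is unavoidable.
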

\begin{proof}
 By construction, $\exact$ is the underlying category of the operad $\exact^\otimes$.
 Proving that $\exact^\otimes$ is a symmetric monoidal category reduces to showing that the category of biexact functors with source $\cC \times \cD$ has an initial object, which follows from \cref{prop:tensor-univ-prop}.
 Since $\Proj(\bbS)$ is the additive category freely generated by a point, it follows that
 \[\Fun^\exct(\cC \otimes \Proj(\bbS),\cD) \simeq \Fun^\oplus(\Proj(\bbS),\Fun^\exct(\cC,\cD)) \simeq \Fun^\exct(\cC,\cD), \]
 which shows that $\Proj(\bbS)$ is a unit object.
 This proves assertion~\eqref{cor:exact-symm-monoidal-1}.

 By definition, $(\catprst)^\otimes$ and $(\catst)^\otimes$ are the full suboperads of $(\catrex)^\otimes$ spanned by the prestable, respectively stable, categories.
 In particular, both identify with full suboperads of $\exact^\otimes$, making the inclusion functors $\catst \to \catprst \to \exact$ lax symmetric monoidal.
 The left adjoint $\stab \simeq \Sp^\omega \otimes - \colon \catprst \to \catst$ canonically refines to a symmetric monoidal functor, and the left adjoint $\derb_{\geq 0}$ comes equipped with a unique oplax symmetric monoidal structure by \cite[Proposition~A]{hhln:lax-adjunctions}.
 By comparing universal properties, one finds that $\derb_{\geq 0}(\Proj(\bbS)) \to \Sp^\omega_{\geq 0}$ and $\derb_{\geq 0}(\cC_1 \otimes \cC_2) \to \derb_{\geq 0}(\cC_1) \otimes \derb_{\geq 0}(\cC_2)$ are equivalences, so $\derb_{\geq 0}$ is in fact symmetric monoidal, proving assertion~\eqref{cor:exact-symm-monoidal-2}.

 It follows that the adjunction $\derb_{\geq 0} \colon \exact \rightleftarrows \catprst \cocolon \inc$ induces an adjunction
 \[ \Alg_\cO(\exact) \rightleftarrows \Alg_\cO(\catprst) \]
 for every operad $\cO$.
 Unwinding what this statement means for the unit $\cC \to \derb_{\geq 0}(\cC)$ yields assertion~\eqref{cor:exact-symm-monoidal-3}.
 For the statement about $\cC \to \derb(\cC)$, one uses that the Spanier--Whitehead stabilisation functor $\stab \simeq \sp^\omega \otimes- $ is a symmetric monoidal left adjoint to the inclusion of stable categories into prestable ones.

 By \cite[Proposition~4.8.1.3, Remark~4.8.1.8 \& Proposition~4.8.1.15]{HA}, the ind-completion functor $\Ind \colon \catrex \to \PrL$ admits a symmetric monoidal refinement.
 The lax symmetric monoidal composition
 \[ \exact \xrightarrow{\derb_{\geq 0}} \catprst \xrightarrow{\inc} \catrex \xrightarrow{\Ind} \PrL \]
 factors over the full suboperad $\Groth^\otimes$ of $(\PrL)^\otimes$.
 Since $\Ind \circ \derb_{\geq 0} \simeq \psh_\lex$ by \cref{cor:ind-derb-is-plex}, this provides a lax symmetric monoidal refinement of $\psh_\lex$.
 The universal property of $\psh_\lex$ from \cref{thm:plex}~\eqref{thm:plex-5} implies that $\psh_\lex \colon \exact \to \Groth$ is symmetric monoidal.
 The stabilisation functor $\Sp(-) \simeq \Sp \otimes - \colon \Groth \to \PrL_\st$ is also symmetric monoidal, so $\psh_\lex^\st$ carries an induced symmetric monoidal structure.
 This proves assertion~\eqref{cor:exact-symm-monoidal-4}.

 For the last assertion, note that the restriction functors
 \[ \Fun^\Ldec_\otimes(\psh_\lex(\cC),\cD) \simeq \Fun^\Ldec_\otimes(\Ind(\derb_{\geq 0}(\cC)),\cD) \to \Funex{\otimes}(\derb_{\geq 0}(\cC),\cD) \to \Funex{\otimes}(\cC,\cD) \]
 are equivalences due to \cite[Proposition~4.8.1.10]{HA} and the version of assertion~\eqref{cor:exact-symm-monoidal-4} for large categories.
 By virtue of \cref{prop:derb-pos-left-adjoint}, the large version of $\derb_{\geq 0}$ coincides with its small incarnation on small categories, proving the desired universal property.

 The analogous claim for $\psh^\st_\lex$ follows since the stabilisation functor $\Sp \otimes - \colon \PrL \to \PrL_\st$ is symmetric monoidal.
\end{proof}

For a fixed exact category $\cD$, the functor $- \otimes \cD$ need not preserve fully faithful functors: for example, $- \otimes \Sp^\omega \simeq \derb(-)$.
However, it does preserve inclusions of left special subcategories.

\begin{cor}\label{cor:tensor-special}
    Let $\cU$ be a left special subcategory of a small exact category $\cC$.
    Then for every small exact category $\cD$, the induced functor
    \[ \cU \otimes \cD \to \cC \otimes \cD \]
    exhibits the domain as a left special subcategory.
\end{cor}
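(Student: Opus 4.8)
The plan is to transport the statement to the presheaf categories and apply \cref{prop:special-fullyfaithful}. Write $j \colon \cU \to \cC$ for the inclusion and $k := j \otimes \id_\cD \colon \cU \otimes \cD \to \cC \otimes \cD$ for the induced functor (using that $\otimes$ is a functor on $\exact$ by \cref{cor:exact-symm-monoidal}~\eqref{cor:exact-symm-monoidal-1}). By \cref{prop:tensor-univ-prop} together with the symmetric monoidal refinement of $\psh_\lex$ from \cref{cor:exact-symm-monoidal}~\eqref{cor:exact-symm-monoidal-4}, the functor $\psh_\lex(k)$ is identified with $j_\natural \otimes \id_{\psh_\lex(\cD)} \colon \psh_\lex(\cU) \otimes \psh_\lex(\cD) \to \psh_\lex(\cC) \otimes \psh_\lex(\cD)$, compatibly with the inclusions of $\cU \otimes \cD$ and $\cC \otimes \cD$ provided by \cref{const:tenor}. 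Since $\cU$ is left special in $\cC$, \cref{prop:special-fullyfaithful} tells us that $j_\natural$ is fully faithful and that $j^* \colon \psh_\lex(\cC) \to \psh_\lex(\cU)$ preserves colimits; as $j^*$ also preserves all limits (it is an honest restriction functor), it is a morphism of Grothendieck prestable categories. Applying $- \otimes \psh_\lex(\cD)$ to the adjunction $j_\natural \dashv j^*$ — which is an adjunction in $\PrL$ precisely because $j^*$ preserves colimits — produces an adjunction $j_\natural \otimes \id \dashv j^* \otimes \id$ whose right adjoint preserves colimits, and whose left adjoint is fully faithful because its unit is obtained by tensoring the invertible unit of $j_\natural \dashv j^*$ with $\id_{\psh_\lex(\cD)}$.

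First I would verify that $k$ exhibits $\cU \otimes \cD$ as an extension-closed subcategory of $\cC \otimes \cD$. Full faithfulness of $k$ is inherited from that of $j_\natural \otimes \id$ and full faithfulness of $\yo_\lex$, since $k$ is the restriction of $j_\natural \otimes \id$ to the relevant extension-closed subcategories. For extension-closedness, suppose $X \to Y \to Z$ is an exact sequence in $\cC \otimes \cD$ with $X$ and $Z$ in the essential image of $k$; it is in particular a cofibre sequence in the ambient Grothendieck prestable category $\psh_\lex(\cC) \otimes \psh_\lex(\cD)$. Applying the exact functor $j^* \otimes \id$ and using $(j^* \otimes \id) \circ (j_\natural \otimes \id) \simeq \id$, we find that $(j^* \otimes \id)(Y)$ sits in a cofibre sequence between objects of $\cU \otimes \cD$, hence lies in $\cU \otimes \cD$ by extension-closedness. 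The counit of $j_\natural \otimes \id \dashv j^* \otimes \id$ is an equivalence on $X$ and on $Z$; since cofibre sequences in a prestable category are fibre sequences, $Y$ is recovered as $\fib(Z \to \Sigma X)$, and the corresponding two-out-of-three argument shows the counit is an equivalence on $Y$ as well. Thus $Y \simeq (j_\natural \otimes \id)((j^* \otimes \id)(Y))$ lies in the essential image of $k$.

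It then remains to apply \cref{prop:special-fullyfaithful} in the direction \eqref{it:special-fullyfaithful4} $\Rightarrow$ \eqref{it:special-fullyfaithful1}. The restriction functor $k^* \colon \psh_\lex(\cC \otimes \cD) \to \psh_\lex(\cU \otimes \cD)$ is the right adjoint of $\psh_\lex(k) \simeq j_\natural \otimes \id$; by uniqueness of adjoints it is identified with $j^* \otimes \id$, which preserves colimits. Hence $\cU \otimes \cD$ is left special in $\cC \otimes \cD$, as claimed.

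I expect the main obstacle to be the identification in the first paragraph: one must check carefully that, under the equivalence $\psh_\lex(\cC) \otimes \psh_\lex(\cD) \simeq \psh_\lex(\cC \otimes \cD)$ of \cref{prop:tensor-univ-prop}, the functor $\psh_\lex(k)$ really is $j_\natural \otimes \id$, and that the extension-closed subcategories $\cU \otimes \cD \subseteq \psh_\lex(\cU) \otimes \psh_\lex(\cD)$ and $\cC \otimes \cD \subseteq \psh_\lex(\cC) \otimes \psh_\lex(\cD)$ of \cref{const:tenor} match up along $k$ — essentially a naturality statement for \cref{const:tenor} and the monoidal structure of \cref{cor:exact-symm-monoidal}. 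Once this bookkeeping is in place, the remaining arguments are formal manipulations with adjunctions in $\PrL$ and elementary properties of prestable categories.
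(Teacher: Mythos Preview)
Your proposal is correct and follows essentially the same approach as the paper: both arguments use \cref{prop:special-fullyfaithful} to pass to the Lurie tensor product $j_\natural \otimes \id$, observe that this is a fully faithful left adjoint in $\PrL$ (because $j^*$ preserves colimits), identify it with $\psh_\lex(k)$ via the symmetric monoidality of $\psh_\lex$, and then apply \cref{prop:special-fullyfaithful} in the direction \eqref{it:special-fullyfaithful4}$\Rightarrow$\eqref{it:special-fullyfaithful1}. Your write-up is more careful than the paper's in one respect: you explicitly verify that $\cU \otimes \cD$ is an extension-closed subcategory of $\cC \otimes \cD$ (via the counit argument), which is a hypothesis of \cref{prop:special-fullyfaithful} that the paper's proof passes over in silence.
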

\begin{proof}
 \cref{prop:special-fullyfaithful} asserts that $j_\natural \colon \psh_\lex(\cU) \to \psh_\lex(\cC)$ is a left adjoint in $\PrL$.
 Consequently, the induced functor
 \[ \psh_\lex(\cU) \otimes \psh_\lex(\cD) \xrightarrow{j_\natural \otimes \id} \psh_\lex(\cC) \otimes \psh_\lex(\cD)\]
 on Lurie tensor products is fully faithful and left adjoint in $\PrL$.
 Since $\psh_\lex$ is symmetric monoidal by \cref{cor:exact-symm-monoidal}, this implies that $\cU \otimes \cD$ is left special in $\cC \otimes \cD$, again by \cref{prop:special-fullyfaithful}.
\end{proof}

\section{The example of synthetic spectra}\label{sec:synthetic}
In this section, we explain in which sense Pstr\k{a}gowski's categories of synthetic spectra are presentable stable envelopes of exact categories.
This was first pointed out to the authors by Robert Burklund.

\begin{defn}[{Pstr\k{a}gowski \cite[Definition~2.3]{pstragowski:synthetic}}]
    An \emph{additive site} is a small site $\mathcal{C}$ such that the underlying category is additive and such that every covering consists of a single morphism.
\end{defn}

\begin{defn}[{Pstr\k{a}gowski \cite[Definition~3.13]{pstragowski:synthetic}}]
    Let $E$ be a homotopy associative ring spectrum.
    We say that a spectrum $X$ is \emph{$E$-finite projective} if it is finite and $E_{*}X$ is a finite projective $E_{*}$-module.
    We denote the full subcategory of spectra spanned by $E$-finite projectives by $\sp^{\mathrm{fp}}_{E}$.
\end{defn}

\begin{ex}[{Pstr\k{a}gowski \cite[Proposition~3.23]{pstragowski:synthetic}}]\label{syn:topology}
    If $E$ is a homotopy associative ring spectrum, one can define a Grothendieck topology on $\sp^{\mathrm{fp}}_{E}$ where coverings consist of a single morphism $x\to y$ such that $E_{*}x\to E_{*}y$ is surjective. The category $\sp^{\mathrm{fp}}_{E}$ of $E$-finite projectives forms an additive site. 
\end{ex}

\begin{prop}\label{syn:exact-gives-top}\label{syn:cov-cowaldhausen-structure}
    There is an isomorphism of posets
    \[
    \{\text{coWaldhausen structures on }\cC\} \xrightarrow{\sim} \left\{\begin{matrix}
        \text{Additive quasi-topologies on }\cC\\ \text{such that }x\to 0\text{ is a covering}
    \end{matrix}\right\}.
    \]
    which associates to a coWaldhausen structure the quasi-topology for which a covering is precisely an egressive morphism.
\end{prop}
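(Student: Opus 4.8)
The plan is to write down both maps directly in terms of the distinguished class of morphisms and to check that, under the evident dualisation, the axioms of a coWaldhausen structure match verbatim with those of an additive quasi-topology in which every $x\to 0$ is a covering. Dualising Barwick's definition \cite{barwick:heart}, a coWaldhausen structure on $\cC$ is a subcategory $\cC_\egr\subseteq\cC$ (the \emph{egressive} morphisms) containing all equivalences, such that (i) the morphism $x\to 0$ is egressive for every object $x$, and (ii) the base change of an egressive morphism along an arbitrary morphism exists and is again egressive. An additive quasi-topology on $\cC$ is a class $\cT$ of morphisms closed under composition, containing all equivalences, and stable under base change (with those base changes existing); its coverings of an object $x$ are the members of $\cT$ with target $x$. (Closure of $\cT$ under finite biproducts, if demanded, is automatic: for egressives $p\colon x'\to x$ and $q\colon y'\to y$ one checks that $\id_{x'}\oplus q$ is the base change of $q$ along $x'\oplus y\to y$ and $p\oplus\id_y$ the base change of $p$ along $x'\oplus y\to x$, so $p\oplus q$ is their composite.)

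First I would define the forward map $\Phi$ sending a coWaldhausen structure to its class of egressive morphisms. That $\cC_\egr$ is a subcategory containing all equivalences gives closure under composition and the presence of all equivalences; condition (ii) is exactly stability under base change; and condition (i) says precisely that $x\to 0$ is a covering. Hence $\Phi(\cC_\egr)$ is an additive quasi-topology of the required kind.

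Conversely I would define $\Psi$ sending such a quasi-topology $\cT$ to the pair $(\cC,\cT)$. Closure under composition together with the containment of all equivalences (hence of all identities) makes $\cT$ a (replete) subcategory; condition (i) is the standing hypothesis that $x\to 0$ is a covering; and condition (ii) is the base-change stability of $\cT$ together with the existence of those base changes. So $\Psi(\cT)$ is a coWaldhausen structure. Since both $\Phi$ and $\Psi$ are the identity on the underlying class of distinguished morphisms, they are mutually inverse, and since both are manifestly monotone for the inclusion order on distinguished classes they assemble into the claimed isomorphism of posets.

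The proof is essentially bookkeeping; the only point that needs care is pinning down the precise dictionary between Barwick's Waldhausen axioms in their dual form and the axioms of an additive quasi-topology — in particular the observation that a subcategory stable under base change carries exactly the data of a composition-closed, base-change-stable coverage. If the working definition of additive quasi-topology should include a further local-character or saturation clause, one would additionally verify that the egressives of a coWaldhausen structure satisfy it (and that any such coverage already contains the equivalences and is composition-closed); for the minimal notion of quasi-topology used in the proof of \cref{thm:plex} no further check is needed.
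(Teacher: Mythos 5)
Your proposal is correct and follows essentially the same route as the paper, which likewise observes that the egressive morphisms of a coWaldhausen structure form a subcategory closed under base change (hence a quasi-topology with $x\to 0$ a covering), that conversely such a quasi-topology's coverings satisfy the coWaldhausen axioms, and that the two assignments are mutually inverse by inspection. Your write-up is simply a more explicit version of the paper's three-line verification, with the added (harmless) remarks on monotonicity and closure under biproducts.
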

\begin{proof}
    Every coWaldhausen structure defines a quasi-topology in the specified way because egressive morphisms form a subcategory and are closed under pullbacks along arbitrary maps.
    Declaring the coverings of an additive quasi-topology to be egressive defines a coWaldhausen structure if $x \to 0$ is a covering for all $x \in \cC$.
    By inspection, these assignments are mutually inverse.
\end{proof}

\begin{rem}
    It is currently unknown to the authors whether there is some (non-tautological) condition one can put on an additive site such that the coWaldhausen structure of \Cref{syn:cov-cowaldhausen-structure} is exact. 
\end{rem}

\begin{theorem}[{Pstr\k{a}gowski \cite[Theorem~2.8]{pstragowski:synthetic}}]\label{syn:recognition}
    Let $\mathcal{C}$ be an additive site.
    An additive presheaf $X \in \psh_{\Sigma}(\mathcal{C})$ is a sheaf if and only if for every fibre sequence $F \to B \to A$ where $B \to A$ is a covering, the induced sequence $X(A) \to X(B) \to X(F)$ is a fibre sequence of anima.
\end{theorem}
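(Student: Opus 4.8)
The plan is to translate the sheaf condition into a statement about mapping out of a single explicit object of $\psh_\Sigma(\cC)$ attached to each covering, and then to compute that object via the additive structure.

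First I would recall that, since every covering in an additive site consists of one morphism, an additive presheaf $X$ is a sheaf if and only if for each covering $p \colon B \to A$ the canonical map $X(A) \to \Map_{\psh(\cC)}(S_p, X)$ is an equivalence, where $S_p \hookrightarrow \yo(A)$ is the sieve generated by $p$. By \cref{syn:cov-cowaldhausen-structure} the topology arises from a coWaldhausen structure, so $p$ fits into a fibre sequence $F \xrightarrow{\iota} B \xrightarrow{p} A$ in $\cC$ and its Čech nerve $\check C(p)_\bullet$ exists in $\cC$; since the Yoneda embedding preserves limits, $S_p \simeq \colim_{[n] \in \Delta^\op} \yo(\check C(p)_n)$. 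As $X$ is additive, $\Map_{\psh(\cC)}(S_p, X) \simeq \Map_{\psh_\Sigma(\cC)}(L S_p, X)$ for $L \colon \psh(\cC) \to \psh_\Sigma(\cC)$ the additivisation, and since $L$ preserves colimits, $L S_p \simeq \colim_{[n] \in \Delta^\op} \yo_\Sigma(\check C(p)_n)$.

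The heart of the argument is to compute this colimit in the Grothendieck prestable category $\psh_\Sigma(\cC) \simeq \Fun^\oplus(\cC^\op,\Sp_{\geq 0})$. Additivity of $\cC$ gives isomorphisms $\check C(p)_n \cong B \oplus F^{\oplus n}$ under which $\check C(p)_\bullet$ becomes the bar-type simplicial object encoding $F \xrightarrow{\iota} B$: the outer faces are $d_0(b;f_1,\ldots,f_n) = (b + \iota f_1; f_2,\ldots,f_n)$ and $d_n(b;f_1,\ldots,f_n) = (b;f_1,\ldots,f_{n-1})$, the inner faces add adjacent $f_i$'s, and the degeneracies insert zeros. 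Applying the coproduct-preserving functor $\yo_\Sigma$ and forming the normalised chain complex inside $\psh_\Sigma(\cC)$ — which vanishes in simplicial degrees $\geq 2$, where the split monomorphisms $\yo_\Sigma(s_j)$ already jointly span — one is left with the two-term complex $\yo_\Sigma(F) \xrightarrow{\yo_\Sigma(\iota)} \yo_\Sigma(B)$ in degrees $1$ and $0$. Hence $L S_p \simeq \cofib\bigl(\yo_\Sigma(\iota) \colon \yo_\Sigma(F) \to \yo_\Sigma(B)\bigr)$, and the additivised sieve inclusion $L S_p \to \yo_\Sigma(A)$ is exactly a comparison map of the kind introduced in \cref{def:comparison-map}.

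To conclude I would apply $\Map_{\psh_\Sigma(\cC)}(-,X)$ to the cofibre sequence $\yo_\Sigma(F) \to \yo_\Sigma(B) \to \cofib(\yo_\Sigma(\iota))$ and invoke the Yoneda lemma, obtaining $\Map_{\psh(\cC)}(S_p, X) \simeq \fib\bigl(X(B) \to X(F)\bigr)$; under this identification the sheaf comparison map becomes the canonical map from $X(A)$ to the homotopy fibre of $X(A) \to X(B) \to X(F)$, whose composite is null since $X$ is additive and $p\iota \simeq 0$. Thus $X$ is a sheaf precisely when $X(A) \to X(B) \to X(F)$ is a fibre sequence of anima for every covering $p$, and since every fibre sequence in $\cC$ whose last map is a covering arises this way, the two formulations coincide. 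I expect the colimit computation to be the main obstacle: the temptation is to treat $\yo_\Sigma(\check C(p)_\bullet)$ as the Čech nerve of $\yo_\Sigma(p)$, but $\yo_\Sigma$ is not left exact, so one must instead exploit the explicit additive structure of the Čech nerve through a Dold--Kan normalisation in $\psh_\Sigma(\cC)$. The related subtlety is that an additive site need not be exact — the sequence $F \to B \to A$ is generally not a pullback square in $\cC$ — which is why \cref{thm:plex}~\eqref{thm:plex-1} does not apply directly and the argument must run through the sieve rather than through left exactness of $X$ itself.
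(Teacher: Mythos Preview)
Your argument is correct and is essentially the \v{C}ech-nerve computation that the paper attributes to Pstr\k{a}gowski without reproducing: the paper states \cref{syn:recognition} only as a citation and, in the proof of \cref{thm:plex}~\eqref{thm:plex-1}, explicitly points to \cite[Theorem~2.8]{pstragowski:synthetic} ``for a proof in terms of \v{C}ech covers''. Your key identification $L S_p \simeq \cofib(\yo_\Sigma(\iota))$ via the bar resolution and Dold--Kan normalisation in the prestable category $\psh_\Sigma(\cC)$ is exactly that argument.

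The related argument the paper does give, inside the proof of \cref{thm:plex}~\eqref{thm:plex-1}, is genuinely different and strictly weaker in scope: it uses that for an \emph{exact} category the sequence $\yo_\Sigma(x) \to \yo_\Sigma(y) \to \yo_\Sigma(z)$ is already a pointwise fibre sequence of connective spectra, whence $\kappa(i)$ is $0$-coconnected and identifies directly with the sieve inclusion. That shortcut is unavailable for a general additive site, as you correctly observe, and your \v{C}ech/bar computation is what fills the gap.

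Two small points of presentation. First, your appeal to \cref{syn:cov-cowaldhausen-structure} is not quite on target: that proposition characterises additive quasi-topologies in which every $x \to 0$ is a covering, which is not assumed for an arbitrary additive site. What you actually need---existence of the fibre $F$ and of the iterated pullbacks $\check C(p)_n$---follows directly from stability of coverings under base change, so just cite that. Second, in your closing remark the sequence $F \to B \to A$ \emph{is} a pullback square by definition of $F$; what can fail in a non-exact additive site is that it be a \emph{pushout} (bicartesian) square, which is why one cannot simply invoke left exactness of $X$ and must instead pass through the sieve as you do.
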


\begin{cor}
    Let $\mathcal{C}$ be an exact category. A functor $X \in \psh_{\Sigma}(\mathcal{C})$ is a left exact presheaf if and only if it is a sheaf for the egressive topology defined in \cref{syn:exact-gives-top}. In particular, we get that $\psh_{\lex}(\mathcal{C})=\shv_{\Sigma}(\mathcal{C})$.
\end{cor}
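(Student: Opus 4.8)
The plan is to derive this directly from Pstr\k{a}gowski's recognition criterion \cref{syn:recognition}; it in fact repackages what is already established inside the proof of \cref{thm:plex}~\eqref{thm:plex-1}. First I would observe that $\cC$, equipped with the egressive quasi-topology of \cref{syn:exact-gives-top}, is an additive site in the sense required by \cref{syn:recognition}: the underlying category is additive because $\cC$ is exact, every covering is a single (egressive) morphism, and the remaining axioms hold because egressives are closed under pullback and $y \to 0$ is egressive for all $y$. So $\psh_\lex(\cC)$ and $\shv_\Sigma(\cC)$ are both full subcategories of $\psh_\Sigma(\cC)$, and it suffices to compare their membership conditions.

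The key step is to identify the test diagrams in \cref{syn:recognition} with the exact sequences of $\cC$. If $x \overset{i}{\rightarrowtail} y \overset{p}{\twoheadrightarrow} z$ is exact, then $p$ is egressive, hence a covering, and since an exact sequence is in particular a fibre sequence, $x \to y \xrightarrow{p} z$ is a fibre sequence of $\cC$ whose terminal map is a covering. Conversely, let $F \to B \xrightarrow{p} A$ be a fibre sequence in $\cC$ with $p$ a covering. Then $p$ is egressive, hence the cofibre of some ingressive $B' \rightarrowtail B$; the sequence $B' \rightarrowtail B \overset{p}{\twoheadrightarrow} A$ is thus exact, in particular a fibre sequence, so $B' \simeq \fib(p) \simeq F$ and $F \rightarrowtail B \overset{p}{\twoheadrightarrow} A$ is exact. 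Hence these two classes of diagrams agree up to equivalence.

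Granting this, the corollary is immediate: by \cref{syn:recognition}, an additive presheaf $X \in \psh_\Sigma(\cC)$ is a sheaf for the egressive topology if and only if $X(z) \to X(y) \to X(x)$ is a fibre sequence of anima for every exact sequence $x \rightarrowtail y \twoheadrightarrow z$, which is exactly the requirement that $X$ be left exact. Therefore $\psh_\lex(\cC) = \shv_\Sigma(\cC)$ as full subcategories of $\psh_\Sigma(\cC)$, and a fortiori of $\psh(\cC)$.

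The only point that takes a moment of care is the identification in the middle paragraph, which amounts to unwinding Barwick's definition of an exact sequence as a bifibre square in order to see that every egressive morphism of $\cC$ admits a fibre in $\cC$ and, together with it, forms an exact sequence. The rest is a formal comparison of definitions.
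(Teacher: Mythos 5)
Your proof is correct, and it is precisely the route the paper flags as the ``alternative'' proof at the end of the argument for \cref{thm:plex}~\eqref{thm:plex-1} (``see \cite[Theorem~2.8]{pstragowski:synthetic} for a proof in terms of \v{C}ech covers''). The paper itself attaches no proof to this corollary; the remark following it points back to \cref{thm:plex}, whose own mechanism is different from yours: there one shows that the comparison map $\kappa(i) \colon \cofib(\yo_\Sigma(i)) \to \yo_\Sigma(z)$ is $0$-coconnected with discrete cofibre and is exactly the sieve generated by the covering $p \colon y \twoheadrightarrow z$, so that locality with respect to the $\kappa(i)$ (which is left exactness by \cref{rem:lex-local}) coincides with the sheaf condition. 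You instead black-box Pstr\k{a}gowski's recognition criterion \cref{syn:recognition} and match its test diagrams (fibre sequences whose second map is a covering) with the exact sequences of $\cC$; your verification of that matching is the right one — exact sequences are bicartesian, and every egressive admits a fibre in $\cC$ forming an exact sequence, so the two classes of diagrams agree. What your approach buys is brevity and a cleaner conceptual link to the additive-site formalism of \cref{sec:synthetic}; what the paper's approach buys is that the sieve identification is needed anyway to see that the restricted sheafification is left exact and to control $\ker(\LEx)$, so it comes for free there. The only (harmless) point left implicit in both treatments is that a sequence in $\Fun^\oplus(\cC^\op,\Sp_{\geq 0})$ is a fibre sequence if and only if the underlying sequence of anima is one, which holds since $\Omega^\infty$ preserves limits and is conservative on connective spectra.
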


\begin{rem}
    The fact that $\psh_\lex(\cC) = \shv_{\Sigma}(\cC)$ was already shown as part of proving \Cref{thm:plex}. We reiterate it here to make the connection to the theory of additive sheaves on additive sites more visible.
\end{rem}

\begin{defn}[{Pstr\k{a}gowski \cite[Definition~3.14]{pstragowski:synthetic}}]
    Let $E$ be a homotopy associative ring spectrum.
    We say that $E$ is \emph{Adams-type} if
    \begin{enumerate}
        \item $E$ can be written as a filtered colimit $\colim_{\alpha} E_{\alpha} \simeq E$ such that
        \item every $E_{\alpha}$ is $E$-finite projective and the canonical map
        \[
            E^{*}E_{\alpha}\to \mathrm{Hom}_{E_*}(E_{*}E_{\alpha},E_{*})
        \]
        is an isomorphism.
    \end{enumerate}
\end{defn}

\begin{defn}[{\cite[Definition~4.1]{pstragowski:synthetic}}]
    An \emph{$E$-based synthetic spectrum} is an additive sheaf of spectra on the site $\sp^{\mathrm{fp}}_{E}$ with the topology described in \cref{syn:topology}. We denote the category of $E$-based synthetic spectra by $\Syn_{E}$. If $y\colon \sp^{\mathrm{fp}}_{E}\to \shv_{\Sigma}(\sp^{\mathrm{fp}}_{E})$ denotes the Yoneda embedding into the category of additive sheaves of spaces, then we let $\nu\colon \sp^{\mathrm{fp}}_{E}\to \Syn_{E}$ be the composite of $y$ with
        \[
            \Sigma^{\infty}\colon \shv_\Sigma(\sp^{\mathrm{fp}}_{E})\to \shv_{\Sigma}^{\sp}(\sp^{\mathrm{fp}}_E) =\hspace{-1.5pt}\cocolon{}  \Syn_{E}.
        \]
    We name the composite the \emph{synthetic analogue}.
\end{defn}

\begin{rem}
    The synthetic analogue extends to a functor $\nu\colon \sp \to \Syn_{E}$ on all spectra. Usually, it is this functor which is called the synthetic analogue.
    However, for this note only the above restricted version is necessary.
\end{rem}

\begin{theorem}[{Pstr\k{a}gowski \cite[Proposition~4.2]{pstragowski:synthetic}}]\label{thm:synthetic-presentably-monoidal}
    The category of $E$-based synthetic spectra is a presentably symmetric monoidal $\infty$-category generated by the image of the synthetic analogue.
\end{theorem}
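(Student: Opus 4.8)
The plan is to realise the symmetric monoidal structure on $\Syn_E$ as the image, under the functor $\psh_\lex^\st$ of \cref{cor:exact-symm-monoidal}, of a symmetric monoidal structure on the exact category $\sp^{\mathrm{fp}}_{E}$; this also recovers the Day-convolution construction of Pstr\k{a}gowski and makes the compatibility transparent. Concretely, I would first upgrade $\sp^{\mathrm{fp}}_{E}$ to a commutative algebra object of $\exact^\otimes$. The smash product of spectra restricts to a functor $\sp^{\mathrm{fp}}_{E} \times \sp^{\mathrm{fp}}_{E} \to \sp^{\mathrm{fp}}_{E}$: the unit $\bbS$ lies in $\sp^{\mathrm{fp}}_{E}$ because $E_*\bbS = E_*$ is free of rank one, and if $X, Y \in \sp^{\mathrm{fp}}_{E}$ then $E_*X$ is finite projective, hence flat, so the K\"unneth spectral sequence collapses to yield $E_*(X \wedge Y) \cong E_*X \otimes_{E_*} E_*Y$, which is again finite projective (alternatively one reduces along the cofibre sequences that build the finite spectrum $Y$ from spheres).

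Next I would check that this symmetric monoidal structure is biexact for the exact structure of \cref{syn:E-based-synthetic-exact}. Fixing $W \in \sp^{\mathrm{fp}}_{E}$, the functor $- \wedge W$ preserves cofibre sequences of spectra, and a short exact sequence $E_*X \to E_*Y \to E_*Z$ remains short exact after tensoring with the flat module $E_*W$; hence $- \wedge W$ sends exact sequences of $\sp^{\mathrm{fp}}_{E}$ to exact sequences, so it is exact. Thus $\sp^{\mathrm{fp}}_{E}$ equipped with the smash product is a commutative algebra in $\exact^\otimes$.

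From here the conclusion is formal. By \cref{cor:exact-symm-monoidal}~\eqref{cor:exact-symm-monoidal-4} the functor $\psh_\lex^\st \colon \exact \to \PrL_\st$ is symmetric monoidal, so it carries the commutative algebra $\sp^{\mathrm{fp}}_{E}$ to a commutative algebra in $\PrL_\st$, that is, a presentably symmetric monoidal stable category; transporting along the equivalence $\Syn_E \simeq \psh_\lex^\st(\sp^{\mathrm{fp}}_{E})$ of \cref{syn:E-based-synthetic-exact} produces the asserted structure on $\Syn_E$. Moreover, \cref{cor:exact-symm-monoidal}~\eqref{cor:exact-symm-monoidal-5} shows that the unit $\sp^{\mathrm{fp}}_{E} \to \psh_\lex^\st(\sp^{\mathrm{fp}}_{E})$ — which is the composite $\sp^{\mathrm{fp}}_{E} \xrightarrow{\yo_\lex} \psh_\lex(\sp^{\mathrm{fp}}_{E}) \xrightarrow{\Sigma^\infty} \psh_\lex^\st(\sp^{\mathrm{fp}}_{E})$, and hence the synthetic analogue $\nu$ — refines to a symmetric monoidal functor. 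For the generation statement I would observe that $\psh_\lex(\sp^{\mathrm{fp}}_{E})$ is generated under colimits by the representables $\yo_\lex(x)$, being a colimit-preserving localisation of the projectively generated category $\psh_\Sigma(\sp^{\mathrm{fp}}_{E})$, and that $\Sigma^\infty$ is colimit-preserving; it follows that $\psh_\lex^\st(\sp^{\mathrm{fp}}_{E})$ is generated under colimits by the objects $\Sigma^n\nu(x)$ with $n \in \bbZ$ and $x \in \sp^{\mathrm{fp}}_{E}$, so $\Syn_E$ is the smallest colimit-closed stable subcategory containing the image of $\nu$.

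I expect the main obstacle to be the first step: verifying that $\sp^{\mathrm{fp}}_{E}$ is closed under the smash product and that the resulting monoidal structure is biexact, both of which hinge on the flatness of $E_*X$ for $X \in \sp^{\mathrm{fp}}_{E}$. Once this is established everything else is a mechanical application of \cref{cor:exact-symm-monoidal}. A secondary, optional point would be to identify the structure obtained this way with Pstr\k{a}gowski's Day-convolution structure; this is routine, since Day convolution on $\psh_\Sigma$ corresponds to the Lurie tensor product of presentable categories, but it is not needed for the statement as phrased.
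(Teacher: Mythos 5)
Your proposal is correct, and it follows the route the paper itself only sketches: the paper states this result as a citation of Pstr\k{a}gowski and afterwards remarks that, in light of \cref{syn:E-based-synthetic-exact}, it ``may be regarded as a special case of'' \cref{thm:gabriel-quillen} and \cref{cor:exact-symm-monoidal}. The genuine difference is where the symmetric monoidal structure comes from. The paper's proof of \cref{syn:E-based-synthetic-exact} \emph{imports} Pstr\k{a}gowski's Day-convolution structure on $\Syn_E$ (citing his Proposition~4.2 together with \cref{cor:exact-symm-monoidal}~\eqref{cor:exact-symm-monoidal-5}) to produce the comparison functor, so read literally its rederivation of the present statement is circular. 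You instead make $\sp^{\mathrm{fp}}_{E}$ a commutative algebra in $\exact^\otimes$ from the smash product, using flatness of $E_*X$ both for closure under $\wedge$ and for biexactness, and then push forward along the symmetric monoidal functor $\psh_\lex^\st$; this is exactly the missing ingredient that turns the paper's remark into a self-contained proof, and your generation argument (representables generate $\psh_\lex$, stabilise, and note that a colimit-closed stable subcategory containing $\img(\nu)$ contains all $\Sigma^n\nu(x)$) is sound. Two small caveats: for a merely homotopy associative $E$ one should not invoke ``the K\"unneth spectral sequence'' but rather argue directly that $E_*X\otimes_{E_*}E_*(-)$ is a homology theory receiving a map from $E_*(X\wedge -)$ that is an isomorphism on spheres (your cell-induction alternative is the right fallback); and, as you correctly flag, this produces \emph{a} presentably symmetric monoidal structure on a category equivalent to $\Syn_E$, with the identification against Pstr\k{a}gowski's Day convolution being a separate, routine comparison that the statement as phrased does not require.
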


We will now explain how \cref{thm:synthetic-presentably-monoidal} also follows from the results of \cref{sec:gabriel-quillen}.

\begin{theorem}\label{syn:E-based-synthetic-exact}
    Let $E$ be a homotopy associative ring spectrum of Adams-type. 
    Then the coWaldhausen structure determined by the additive site $\sp^{\mathrm{fp}}_{E}$ defines an exact category. Furthermore, the synthetic analogue induces a symmetric monoidal equivalence
    \[ \Syn_E \simeq \psh^{\st}_\lex(\sp^{\mathrm{fp}}_E).\]
\end{theorem}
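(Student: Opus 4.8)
The plan is to prove the two assertions in turn; all of the real work is in the first.

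\emph{The exact structure on $\sp^{\mathrm{fp}}_{E}$.} Rather than verifying Barwick's axioms from scratch for the coWaldhausen structure, I would realise $\sp^{\mathrm{fp}}_{E}$ as an extension-closed subcategory of a larger exact $\infty$-category built on $\sp^{\omega}$, from which exactness follows formally. Writing $E_{*}(-):=\pi_{*}(E\wedge -)$, declare a morphism of compact spectra to be egressive if it induces a surjection on $E_{*}$. Then $\sp^{\omega}$ with these egressives is an exact $\infty$-category: the egressives contain the equivalences, are closed under composition, and are closed under pullback (for an egressive $p\colon y\twoheadrightarrow z$ the map $E_{*}z\to E_{*}\cofib(p)$ vanishes, which forces pullbacks of egressives to be egressive); the corresponding ingressives are precisely the $E_{*}$-injections, and are closed under pushout by the same computation; and the compatibility axiom for ambigressive squares holds for free, since in the stable category $\sp^{\omega}$ every pullback square is a pushout square. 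Next, $\sp^{\mathrm{fp}}_{E}$ is a full additive subcategory of $\sp^{\omega}$ closed under extensions in this exact structure: given an exact sequence $x\rightarrowtail y\twoheadrightarrow z$ with $x,z$ finite and $E_{*}x,E_{*}z$ finite projective, the long exact sequence in $E$-homology degenerates to a short exact sequence $0\to E_{*}x\to E_{*}y\to E_{*}z\to 0$, which splits because $E_{*}z$ is projective, so $E_{*}y$ is finite projective and $y$ is finite. Hence $\sp^{\mathrm{fp}}_{E}$ inherits an exact structure, and the same splitting argument shows that the fibre of every $E_{*}$-surjection between $E$-finite projectives is again $E$-finite projective, so that the egressives of the induced structure are exactly the covering maps of the additive site $\sp^{\mathrm{fp}}_{E}$. (Note this step does not require the Adams-type hypothesis.)

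\emph{The symmetric monoidal equivalence.} On underlying categories the identification is nearly tautological: by the corollary to \cref{syn:recognition} we have $\psh_{\lex}(\sp^{\mathrm{fp}}_{E})=\shv_{\Sigma}(\sp^{\mathrm{fp}}_{E})$, and since forming spectrum objects commutes with left-exact localisation, $\psh_{\lex}^{\st}(\sp^{\mathrm{fp}}_{E})=\Sp(\shv_{\Sigma}(\sp^{\mathrm{fp}}_{E}))=\shv^{\Sp}_{\Sigma}(\sp^{\mathrm{fp}}_{E})=\Syn_{E}$, and under this identification the canonical exact functor $\sp^{\mathrm{fp}}_{E}\to\psh_{\lex}^{\st}(\sp^{\mathrm{fp}}_{E})$ becomes the synthetic analogue $\nu$. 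To upgrade this to a symmetric monoidal equivalence, I would first use the Adams-type hypothesis to see that $\sp^{\mathrm{fp}}_{E}$ is a commutative algebra in $\exact$: Adams-type yields the Künneth isomorphism $E_{*}(x\wedge y)\cong E_{*}x\otimes_{E_{*}}E_{*}y$ for $x\in\sp^{\mathrm{fp}}_{E}$, whence $\sp^{\mathrm{fp}}_{E}$ is closed under the smash product in $\sp^{\omega}$ with unit $\bbS$, and the smash product is exact in each variable. Then \cref{cor:exact-symm-monoidal}~\eqref{cor:exact-symm-monoidal-4} and~\eqref{cor:exact-symm-monoidal-5} make $\psh_{\lex}^{\st}(\sp^{\mathrm{fp}}_{E})$ presentably symmetric monoidal with symmetric monoidal Yoneda embedding, and exhibit it as initial among presentably symmetric monoidal stable $\infty$-categories receiving a symmetric monoidal exact functor from $\sp^{\mathrm{fp}}_{E}$. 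Applying this universal property to $\Syn_{E}$ — which is presentably symmetric monoidal and generated under colimits by the image of the symmetric monoidal functor $\nu$ by \cref{thm:synthetic-presentably-monoidal} — produces a colimit-preserving symmetric monoidal functor $\psh_{\lex}^{\st}(\sp^{\mathrm{fp}}_{E})\to\Syn_{E}$ over $\nu$. Being colimit-preserving it is determined by its restriction to the generating subcategory $\sp^{\mathrm{fp}}_{E}$, namely $\nu$, so it agrees with the underlying identification above and is in particular an equivalence.

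\emph{Main obstacle.} The crux, and the point most prone to error, is the first part: one must confirm that the coWaldhausen structure coming from the additive site genuinely underlies an exact structure, while scrupulously avoiding any appeal to \cref{thm:plex} or \cref{thm:gabriel-quillen}, which presuppose exactness. The strategy above circumvents this by working entirely inside $\sp^{\omega}$ with elementary long-exact-sequence arguments; the only genuinely fiddly bookkeeping is the verification that $(\sp^{\omega},\,E_{*}\text{-surjections})$ satisfies all of Barwick's axioms, but that is routine precisely because $\sp^{\omega}$ is stable. A secondary subtlety in the monoidal statement is that Pstrągowski's Day-convolution monoidal structure on $\Syn_{E}$ must be matched with the one transported along \cref{cor:exact-symm-monoidal}; I would obtain this not by comparing the two constructions directly but from the uniqueness built into the universal property, exactly as in the previous paragraph.
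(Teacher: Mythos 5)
Your proposal is correct, and its first half takes a genuinely different route from the paper's. Where you realise $\sp^{\mathrm{fp}}_{E}$ as an extension-closed subcategory of the exact category $(\sp^{\omega},\,E_{*}\text{-injections},\,E_{*}\text{-surjections})$ and let the fact that extension-closed subcategories of exact categories are exact do the work, the paper verifies the axioms on $\sp^{\mathrm{fp}}_{E}$ directly: additivity; existence (in $\sp^{\mathrm{fp}}_{E}$) of pushouts along ingressives, which it obtains via Spanier--Whitehead duality — every $E$-finite projective is dualisable, duality exchanges $E_{*}$-injections and $E_{*}$-surjections, and the dual pullback visibly stays $E$-finite projective; and the condition that egressive fibre sequences are cofibre sequences, which is immediate from stability of $\sp$. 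Your long-exact-sequence/splitting arguments and the paper's duality argument are two faces of the same computation; your version has the merit of isolating exactly where finite projectivity enters (closure under extensions and under fibres of $E_{*}$-surjections) and of making explicit that Adams-type plays no role in the exact structure. One small caution: the ambigressive compatibility in $\sp^{\omega}$ is not literally ``for free'' from pullback $=$ pushout — one must still check, for instance, that the pullback of an $E_{*}$-injection along an $E_{*}$-surjection is again an $E_{*}$-injection (a short Mayer--Vietoris argument on $E$-homology) — but this is the routine bookkeeping you already flag. The second half of your argument coincides with the paper's: both feed the symmetric monoidal refinement of $\nu$ and Pstr\k{a}gowski's generation statement into \cref{cor:exact-symm-monoidal}~\eqref{cor:exact-symm-monoidal-5} and conclude via the sheaf-recognition identification $\psh_\lex(\sp^{\mathrm{fp}}_{E})\simeq\shv_\Sigma(\sp^{\mathrm{fp}}_{E})$; you merely make explicit the K\"unneth input showing that $\sp^{\mathrm{fp}}_{E}$ is a commutative algebra in $\exact$, which the paper leaves to the citations of Pstr\k{a}gowski.
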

\begin{proof}
    To see that the coWaldhausen structure defines an exact structure on $\sp^{\mathrm{fp}}_{E}$, it suffices to prove the following:
    \begin{enumerate}
         \item\label{syn:E-based-synthetic-exact-1} The category $\sp^{\mathrm{fp}}_{E}$ is additive.
         \item\label{syn:E-based-synthetic-exact-2} If $f \to x \twoheadrightarrow y$ is a fibre sequence and $x \twoheadrightarrow y$ is egressive, and $f \to z$ is any map in $\sp^{\mathrm{fp}}_{E}$, then the pushout $x \oplus_{f} z$ exists, and there exists an egressive map $x \oplus_{f} z \twoheadrightarrow w$ such that 
         $$
            z \to x \oplus_{f} z \twoheadrightarrow w
         $$
         is a fibre sequence.
         \item\label{syn:E-based-synthetic-exact-3} If $f \to x \twoheadrightarrow y$ is a fibre sequence and $x \twoheadrightarrow y$ is egressive, then it is also a cofibre sequence.
     \end{enumerate} 
     Property~\eqref{syn:E-based-synthetic-exact-1} is clear.

     Ad~\eqref{syn:E-based-synthetic-exact-2}: Suppose $f \to x \twoheadrightarrow y$ is a fibre sequence and $f \to z$ is any map. We first argue that the pushout $x \oplus_{f}z$ exists in $\sp^{\mathrm{fp}}_{E}$. We know from \cite[Lemma 3.21]{pstragowski:synthetic} that every object in $\sp^{\mathrm{fp}}_{E}$ is dualisable, with dual given by the Spanier--Whitehead dual.
     Spanier--Whitehead duality takes maps which are injective on $E$-homology to maps which are surjective. In particular, since $Dx \to Df$ is surjective, it follows that $Dx \times_{Df} Dz$ is $E$-finite projective and thus so is $D(Dx \times_{Df}Dz) \simeq x \oplus_{f}z$. Now, consider the diagram
     \[
        \begin{tikzcd}
            f\ar[r]\ar[d] & x\ar[d]\ar[r, twoheadrightarrow] & y\ar[d, "\sim"] \\
            z\ar[r] & x \oplus_{f}z \ar[r] & \operatorname{cof}(z \to x \oplus_{f} z).
        \end{tikzcd}
     \]
     Since the composite $x \to x \oplus_{f} z \to \operatorname{cof}(z \to x \oplus_{f} z)$ is surjective on $E$-homology, it follows that so is $x \oplus_{f} z \to \operatorname{cof}(z \to x \oplus_{f} z)$. It is clear that 
     \[
        z \to x \oplus_{f} z \twoheadrightarrow \operatorname{cof}(z \to x \oplus_{f} z)
     \]
     is a fibre sequence.

     Ad~\eqref{syn:E-based-synthetic-exact-3}: Let $f \to x \twoheadrightarrow y$ be a fibre sequence. Note that it is a fibre sequence in $\sp$, and hence a cofibre sequence $\sp$. It follows that it is a cofibre sequence in $\sp^{\mathrm{fp}}_{E}$.

     The synthetic analogue $\nu \colon \Sp^{\mathrm{fp}}_E \to \Syn_E$ refines to a symmetric monoidal functor by \cite[Corollary~2.29 \& Lemma~3.21]{pstragowski:synthetic}.
     Using \cite[Proposition~4.2]{pstragowski:synthetic} and \cref{cor:exact-symm-monoidal}~\eqref{cor:exact-symm-monoidal-5} provides a symmetric monoidal, colimit-preserving functor
     \[ \psh^\st_\lex(\Sp^\mathrm{fp}_E) \to \Syn_E. \]
     Due to \cref{syn:recognition}, the underlying functor is an equivalence.
\end{proof}

\begin{rem}
    As a result of \Cref{syn:E-based-synthetic-exact}, \Cref{thm:synthetic-presentably-monoidal} may be regarded as a special case of \cref{thm:gabriel-quillen} and \cref{cor:exact-symm-monoidal}.
\end{rem}

\begin{rem}
    It also follows from \cite[Lemma~4.23]{pstragowski:synthetic} that $\Sp^{\mathrm{fp}}_E$ forms an extension-closed subcategory of $\Syn_E$. This provides an alternate proof of \cref{syn:E-based-synthetic-exact}, but would make the entire discussion circular.
\end{rem}

\section{Localising invariants}\label{sec:delooping}

As another application of our results, we will now introduce a fairly natural notion of localising invariants for exact categories and show that it passes the obvious sanity checks: every localising invariant valued in connective spectra deloops uniquely to spectrum-valued localising invariant, and algebraic K-theory is the universal localising invariant under the groupoid core.

\begin{defn}
    Call a reduced functor $F \colon \exact \to \cX$
    \begin{enumerate}
        \item \emph{left localising} if for every left special subcategory inclusion $\cU \subseteq \cC$ the induced square
        \[\begin{tikzcd}
            F(\cU)\ar[r]\ar[d] & F(\cC)\ar[d] \\
            * \simeq F(0)\ar[r] & F(\cC/\cU)
        \end{tikzcd}\]
        is a pullback.
        \item \emph{right localising} if $F \circ (-)^\op \colon \exact \to \cX$ is left localising.
    \end{enumerate}
    Denote by $\Fun^\lloc(\exact,\cX)$ and $\Fun^\rloc(\exact,\cX)$ the full subcategories of the functor category $\Fun(\exact,\cX)$ spanned by the left localising and right localising functors, respectively.
\end{defn}

\begin{rem}\label{rem:localising-idem}
    Since $\cC \to \idem{\cC}$ is the inclusion of a left special subcategory, it follows that left (or right) localising functors are invariant under idempotent completion.
\end{rem}

\begin{lem}\label{lem:derb-karoubi}
    Let $\cU$ be a left special subcategory of an exact category $\cC$.
    Then
    \[ \idem{\derb(\cU)} \to \idem{\derb(\cC)} \to \idem{\derb(\cC/\cU)} \]
    is a Karoubi sequence of stable categories.
\end{lem}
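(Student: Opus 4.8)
The plan is to reduce the claim to the already-established fully faithfulness properties of the Gabriel--Quillen embedding on presentable categories, together with the recollement of \cref{cor:special-recollement}. First I would recall that a Karoubi sequence of stable categories is, by definition, a sequence $\cA \to \cB \to \cC$ which becomes a fibre-cofibre sequence after idempotent completion (equivalently, $\cA \to \cB$ is fully faithful, the composite is zero, and $\cB/\cA \to \cC$ is fully faithful with dense image). Since all three terms in the sequence $\idem{\derb(\cU)} \to \idem{\derb(\cC)} \to \idem{\derb(\cC/\cU)}$ are already idempotent complete by \cref{cor:derb-preserves-idempotent-complete}, it suffices to exhibit this as an honest fibre (equivalently cofibre) sequence of stable categories, i.e.\ to show that $\idem{\derb(\cU)} \to \idem{\derb(\cC)}$ is fully faithful and that the Verdier quotient $\idem{\derb(\cC)}/\idem{\derb(\cU)}$ agrees with $\idem{\derb(\cC/\cU)}$ up to idempotent completion.

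The first point is immediate: by \cref{rem:derb-special} the functor $\derb(\cU) \to \derb(\cC)$ is fully faithful because $\cU \subseteq \cC$ is left special, and idempotent completion preserves fully faithful functors, so $\idem{\derb(\cU)} \to \idem{\derb(\cC)}$ is fully faithful. For the quotient statement I would pass to the Ind-completions and use \cref{cor:ind-derb-is-plex}, which identifies $\Ind(\derb(\cC)) \simeq \psh_\lex^\st(\cC)$ and similarly for $\cU$ and $\cC/\cU$. Applying $\Sp(-)$ to the recollement of \cref{cor:special-recollement} yields a stable recollement
\[\begin{tikzcd}
 \Sp(\psh^\cU_\lex(\cC))\ar[r, hookrightarrow] & \psh_\lex^\st(\cC)\ar[l, bend right=40]\ar[r, "j^*"] & \psh_\lex^\st(\cU)\ar[l, bend right=40, "j_\natural"'],
\end{tikzcd}\]
so $\psh_\lex^\st(\cC)/\psh_\lex^\st(\cU) \simeq \Sp(\psh^\cU_\lex(\cC))$, the category of spectrum-valued left exact presheaves on $\cC$ vanishing on $\cU$. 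The remaining task is to identify this quotient with $\psh_\lex^\st(\cC/\cU)$. Here I would invoke the universal property of the quotient exact category $\cC/\cU$: a left exact presheaf on $\cC$ vanishing on $\cU$ is the same datum as a left exact presheaf on $\cC/\cU$ (this is essentially the defining property of the exact quotient, and matches the way $F(\cC/\cU)$ is used in the definition of localising invariants); stabilising gives $\Sp(\psh^\cU_\lex(\cC)) \simeq \psh_\lex^\st(\cC/\cU)$. Finally, restricting the equivalence $\psh_\lex^\st(\cC)/\psh_\lex^\st(\cU) \simeq \psh_\lex^\st(\cC/\cU)$ to compact objects and idempotent-completing recovers $\idem{\derb(\cC)}/\idem{\derb(\cU)} \simeq \idem{\derb(\cC/\cU)}$, using that the quotient of compactly generated presentable stable categories corresponds on compact objects to the idempotent completion of the Verdier quotient (Neeman, Thomason).

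The main obstacle I anticipate is the precise comparison between the recollement-theoretic quotient $\Sp(\psh_\lex^\cU(\cC))$ and the exact-categorical quotient $\psh_\lex^\st(\cC/\cU)$: one must check that ``left exact presheaf on $\cC$ killing $\cU$'' really is equivalent to ``left exact presheaf on $\cC/\cU$'', which requires knowing that the exact structure on $\cC/\cU$ is set up so that exact sequences are the images of exact sequences in $\cC$ and that $\cU$ is precisely the subcategory sent to $0$. Depending on how $\cC/\cU$ has been defined earlier in the paper (the notation $\quotex{\cC}{\cU}$ appears among the macros), this may be either a direct unwinding of definitions or may need a short separate argument; it is the one place where the statement relies on properties of the quotient beyond the formal machinery already developed. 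The passage between presentable and small categories (compact objects, idempotent completion) is routine given \cref{cor:ind-derb-is-plex} and the standard dictionary between Bousfield localisations of compactly generated stable categories and Karoubi sequences.
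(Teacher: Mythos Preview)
Your proposal is correct but takes a substantially longer route than the paper.  The paper's proof is essentially two lines: by \cref{rem:derb-special} the functor $\derb(\cU) \to \derb(\cC)$ is fully faithful, and since $\derb \colon \exact \to \catst$ is a \emph{left adjoint} (\cref{cor:derb-left-adjoint}) it preserves the cofibre sequence $\cU \to \cC \to \cC/\cU$, so $\derb(\cU) \to \derb(\cC) \to \derb(\cC/\cU)$ is already a cofibre sequence in $\catst$; idempotent completion is again a left adjoint, and the claim follows.  No recollement, no passage to Ind-categories, no identification of $\Sp(\psh_\lex^\cU(\cC))$ with $\psh_\lex^\st(\cC/\cU)$ is needed.

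Your detour through presentable categories does work, but notice that the ``main obstacle'' you flag---showing $\Sp(\psh_\lex^\cU(\cC)) \simeq \psh_\lex^\st(\cC/\cU)$---is most cleanly handled by exactly the same observation: $\psh_\lex^\st \simeq \Ind \circ \derb$ is a composite of left adjoints and therefore preserves the cofibre $\cC/\cU$, identifying $\psh_\lex^\st(\cC/\cU)$ with the cofibre of $\psh_\lex^\st(\cU) \to \psh_\lex^\st(\cC)$ in $\PrL_\st$, which the recollement pins down as $\Sp(\psh_\lex^\cU(\cC))$.  So your argument, once the obstacle is resolved, collapses to the paper's argument carried out one level up.  The appeal to ``the defining property of the exact quotient'' for contravariant left exact presheaves is not literally immediate (the universal property of $\cC/\cU$ speaks of covariant exact functors), and unwinding it lands you on the left-adjoint observation anyway.
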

\begin{proof}
    The induced functor $\derb(\cU) \to \derb(\cC)$ is also fully faithful by \cref{rem:derb-special}.
     Since $\derb \colon \exact \to \catst$ is left adjoint by \cref{cor:derb-left-adjoint},
    \[ \derb(\cU) \to \derb(\cC) \to \derb(\cC/\cU) \]
    is a cofibre sequence of stable categories.
    Taking idempotent completions yields a Karoubi sequence.
\end{proof}

\begin{cor}\label{cor:k-localising}
    The functor
    \[ \K \circ \idem{(-)} \colon \exact \to \Sp_{\geq 0} \]
    is both left and right localising.
\end{cor}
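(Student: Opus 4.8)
The plan is to reduce the assertion to the well-known localisation behaviour of algebraic $K$-theory of stable $\infty$-categories, using \cref{lem:derb-karoubi} as the bridge.

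First I would record that $\K \circ \idem{(-)}$ factors through the stable envelope up to natural equivalence. The Gillet--Waldhausen theorem identifies the connective $K$-theory of an idempotent complete exact category with that of its bounded derived category, so together with \cref{cor:derb-preserves-idempotent-complete} one obtains natural equivalences
\[ \K(\idem{\cC}) \simeq \K(\derb(\idem{\cC})) \simeq \K(\idem{\derb(\cC)}). \]
It therefore suffices to prove that the functor $\cC \mapsto \K(\idem{\derb(\cC)})$ is left and right localising.

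Next, for a left special subcategory $\cU \subseteq \cC$, \cref{lem:derb-karoubi} provides a Karoubi sequence of small idempotent complete stable $\infty$-categories $\idem{\derb(\cU)} \to \idem{\derb(\cC)} \to \idem{\derb(\cC/\cU)}$. Applying the non-connective algebraic $K$-theory functor $\bbK$ of Blumberg--Gepner--Tabuada, which turns Karoubi sequences into fibre sequences of spectra, yields a fibre sequence
\[ \bbK(\idem{\derb(\cU)}) \to \bbK(\idem{\derb(\cC)}) \to \bbK(\idem{\derb(\cC/\cU)}). \]
Since the connective $K$-theory of an idempotent complete stable $\infty$-category is the connective cover of its non-connective $K$-theory, and since $\tau_{\geq 0}\fib_{\Sp}(g) \simeq \fib_{\Sp_{\geq 0}}(\tau_{\geq 0} g)$ for any map $g$ of spectra, applying $\tau_{\geq 0}$ shows precisely that the square
\[\begin{tikzcd}
 \K(\idem{\derb(\cU)})\ar[r]\ar[d] & \K(\idem{\derb(\cC)})\ar[d] \\
 0\ar[r] & \K(\idem{\derb(\cC/\cU)})
\end{tikzcd}\]
is a pullback in $\Sp_{\geq 0}$. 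Via the factorisation of the previous paragraph this establishes that $\K \circ \idem{(-)}$ is left localising. For the right localising statement I would observe that idempotent completion commutes with $(-)^\op$ and that algebraic $K$-theory of exact $\infty$-categories is invariant under passing to opposite categories, so $\K \circ \idem{(-)} \circ (-)^\op \simeq \K \circ \idem{(-)}$ and the claim follows formally from the left localising case.

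The main obstacle is the point that Karoubi (rather than Verdier) sequences only produce fibre sequences for the \emph{non-connective} invariant: connective $K$-theory itself fails to be localising, so the argument must genuinely detour through $\bbK$ together with the elementary compatibility of connective covers with fibres of maps of spectra. Everything else is formal once the Gillet--Waldhausen identification and \cref{lem:derb-karoubi} are in hand.
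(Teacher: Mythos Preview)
Your argument is correct and follows the same strategy as the paper: reduce to stable categories via Gillet--Waldhausen and \cref{cor:derb-preserves-idempotent-complete}, then invoke \cref{lem:derb-karoubi} and a localisation theorem, and handle the right localising case by invariance of $\K$ under opposites.

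The one point worth correcting is your final paragraph. The detour through Blumberg--Gepner--Tabuada's $\bbK$ is not forced: connective algebraic K-theory \emph{does} send Karoubi sequences of idempotent complete stable categories to fibre sequences in $\Sp_{\geq 0}$, and this is exactly what the paper invokes as ``the localisation theorem for stable categories''. One sees this either by your own argument (since $\tau_{\geq 0}$ is a right adjoint and hence preserves fibres, and $\K \simeq \tau_{\geq 0}\bbK$ on idempotent complete stable categories), or directly by combining Waldhausen's fibration theorem for the underlying Verdier sequence $\cA \to \cB \to \cB/\cA$ with cofinality for the map $\K(\cB/\cA) \to \K(\idem{(\cB/\cA)})$. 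So your route through $\bbK$ is a valid unpacking of the same fact, not a genuinely different method; just drop the claim that connective $\K$ ``fails to be localising'' in the relevant sense.
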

\begin{proof}
    Since K-theory is invariant under taking opposites (see eg \cite[Corollary~5.16.1]{barwick:heart}), it suffices to prove that $\K \circ \idem{(-)}$ is left localising.
     
    The localisation theorem for stable categories together with \cref{lem:derb-karoubi} implies that
    \[ \K(\idem{\derb(\cU)}) \to \K(\idem{\derb(\cC)}) \to \K(\idem{\derb(\cC/\cU)}) \]
    is a fibre sequence of connective spectra.
    Since $\derb$ commutes with idempotent completion by \cref{cor:derb-preserves-idempotent-complete}, the Gillet--Waldhausen theorem in connective K-theory \cite[Theorem~1.7]{sw:exact-stable} implies that $\K \circ \idem{(-)}$ is left localising.
\end{proof}

We are now able to prove the following variant of \cite[Theorem~4.4.3]{ck:a1-invariance}.
Note that our argument differs from \textit{loc.\ cit.} because instead of the Bass delooping we deloop by tensoring with the ``Calkin category'' of the sphere.
While we consider this construction to be well-known to the experts (at least in the case of stable categories), we are not aware of a citable source in the literature for this type of argument.

\begin{theorem}\label{thm:delooping}
    \ \begin{enumerate}
        \item
        If $\cX$ is a pointed and finitely complete category,
        then $\Fun^\lloc(\exact,\cX)$ and $\Fun^\rloc(\exact,\cX)$ are stable.
        If $\cX$ admits filtered colimits and these commute with finite limits, the same is true for the full subcategories of finitary functors.
        \item If $\cY$ is a stable category with a right complete t-structure, then taking connective covers induces equivalences
        \[ \tau_{\geq 0} \colon \Fun^\lloc(\exact,\cY) \to \Fun^\lloc(\exact, \cY_{\geq 0}) \]
        and
        \[ \tau_{\geq 0} \colon \Fun^\rloc(\exact,\cY) \to \Fun^\rloc(\exact, \cY_{\geq 0}). \]
        If $\cY_{\leq 0}$ is closed under filtered colimits, the same is true for the subcategories of finitary functors.
    \end{enumerate}
\end{theorem}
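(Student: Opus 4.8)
The plan is to prove the left localising statements and deduce the right localising ones by applying everything to $F\circ(-)^{\op}$, using that $(-)^{\op}$ is an involution of $\exact$ which is symmetric monoidal for the structure of \cref{cor:exact-symm-monoidal} and exchanges left and right special subcategories. For the first assertion, I would start by observing that $\Fun^\lloc(\exact,\cX)$ is closed under finite limits inside $\Fun(\exact,\cX)$: these are computed pointwise, a pointwise limit of pullback squares in a pointed finitely complete category is again such, and the zero functor is left localising. Hence $\Fun^\lloc(\exact,\cX)$ is pointed and finitely complete, and by the standard recognition criterion (a pointed, finitely complete $\infty$-category is stable precisely when its loop functor is an equivalence) it remains to show that the pointwise loop functor $\Omega$ is an equivalence on $\Fun^\lloc(\exact,\cX)$; the same reduction applies to the finitary subcategory, which is closed under finite limits as soon as these commute with filtered colimits in $\cX$.

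To invert $\Omega$ I would use the Calkin delooping. Granting the construction of a flasque exact category $\cT$ — a ``cone of $\bbS$'' — fitting into a sequence $\Proj(\bbS)\to\cT\to\cS$ which is simultaneously an inclusion of a left and right special subcategory together with its quotient $\cS$ (the ``Calkin category'' of $\bbS$), in the spirit of the Wagoner--Karoubi cone construction but carried out for exact $\infty$-categories, the argument runs as follows. Since $-\otimes\cD$ preserves left special subcategory inclusions by \cref{cor:tensor-special} and, being colimit-preserving on the associated presheaf categories, also preserves quotients (cf.\ \cref{cor:special-recollement}), tensoring with $\cC$ yields a left special sequence
\[ \cC\;\simeq\;\cC\otimes\Proj(\bbS)\;\longrightarrow\;\cC\otimes\cT\;\longrightarrow\;\cC\otimes\cS \]
whose middle term is again flasque. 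A left localising $F$ is additive — the split inclusion $\cC\times 0\subseteq\cC\times\cD$ is left special with quotient $\cD$ — hence vanishes on flasque exact categories by an Eilenberg swindle, so applying $F$ to the displayed sequence gives a natural equivalence $F(\cC)\simeq\Omega F(\cC\otimes\cS)$. Thus $\Sigma F:=F(-\otimes\cS)$ is a well-defined endofunctor of $\Fun^\lloc(\exact,\cX)$ two-sided inverse to $\Omega$, so $\Fun^\lloc(\exact,\cX)$ is stable; as $-\otimes\cS$ is colimit-preserving, $\Sigma$ preserves finitary functors, giving the statement for the finitary subcategory too.

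For the second assertion, let $\cY$ be stable with a right complete $t$-structure. I would first check that $\tau_{\geq 0}$ sends left localising functors into $\Fun^\lloc(\exact,\cY_{\geq 0})$: if $F(\cU)\simeq\fib_\cY(F(\cC)\to F(\cC/\cU))$, then since $\fib_\cY$ of two maps differing by coconnective objects have the same connective cover, $\tau_{\geq 0}F(\cU)\simeq\tau_{\geq 0}\fib_\cY(\tau_{\geq 0}F(\cC)\to\tau_{\geq 0}F(\cC/\cU))=\fib_{\cY_{\geq 0}}(\tau_{\geq 0}F(\cC)\to\tau_{\geq 0}F(\cC/\cU))$. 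A quasi-inverse sends a left localising $G\colon\exact\to\cY_{\geq 0}$ to
\[ \cC\;\longmapsto\;\colim_n\;\Sigma_\cY^{-n}\,G(\cC\otimes\cS^{\otimes n}), \]
the transition maps coming from the delooping relation $G(\cC\otimes\cS^{\otimes n})\simeq\Omega_{\cY_{\geq 0}}G(\cC\otimes\cS^{\otimes(n+1)})$, valid because $G$ is additive and therefore vanishes on the flasque categories $\cC\otimes\cS^{\otimes n}\otimes\cT$. This functor is left localising (a filtered colimit of fibre sequences in the stable category $\cY$, using once more that $\cS^{\otimes n}\otimes-$ preserves left special sequences), and its connective cover recovers $G$ since $\tau_{\geq 0}$ of the $n$-th term is $G(\cC)$ and the transition maps are equivalences in a range of degrees growing with $n$. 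Conversely, for $F$ already valued in $\cY$ the first part gives $F(\cC\otimes\cS^{\otimes n})\simeq\Sigma_\cY^{n}F(\cC)$, so the construction returns $\colim_n\tau_{\geq -n}F(\cC)\simeq F(\cC)$ by right completeness of the $t$-structure; hence $\tau_{\geq 0}$ is an equivalence. If moreover $\cY_{\leq 0}$ is closed under filtered colimits, then $\tau_{\geq 0}$ commutes with filtered colimits, which makes the quasi-inverse preserve finitary functors and lets the whole argument take place inside the finitary subcategories.

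The step I expect to be the main obstacle is setting up the Calkin delooping itself: constructing the cone category $\cT$ and the Calkin category $\cS$ of $\bbS$ as exact $\infty$-categories, verifying that $\Proj(\bbS)\to\cT\to\cS$ is genuinely a left and right special sequence with $\cT$ flasque, and confirming that $-\otimes\cT$ and $-\otimes\cS$ interact with left special sequences correctly through \cref{cor:tensor-special} and \cref{cor:special-recollement}. The other point requiring care is the Eilenberg-swindle vanishing of left localising functors on flasque exact categories for an arbitrary pointed, finitely complete target $\cX$, which I would deduce by first reducing to the universal, stably valued localising invariant; by comparison, the $t$-structure bookkeeping in the second part is routine.
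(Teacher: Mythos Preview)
Your strategy is essentially the paper's: both invert $\Omega$ on $\Fun^\lloc(\exact,\cX)$ via a Calkin-type delooping and then deduce part~(2) from part~(1) together with right completeness. Two remarks on where your anticipated difficulties diverge from the actual ones.

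The step you flag as the main obstacle---constructing the flasque cone---is immediate in the paper: it takes $\cF$ to be the additive category of countably generated projective $\bbS$-modules with its split exact structure, so that $\Proj(\bbS) \subseteq \cF$ is a full additive subcategory and hence both left and right special by one of the paper's basic examples following \cref{def:special}. The swindle on $\cF$ is the infinite-sum endofunctor, and it transports to $\cC\otimes\cF$ by functoriality of the tensor product. That $-\otimes\cQ$ preserves left special sequences and their quotients follows from \cref{cor:tensor-special} and the fact that the tensor product preserves colimits in each variable; no separate appeal to \cref{cor:special-recollement} is needed.

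By contrast, your proposed treatment of the swindle vanishing---``reducing to the universal, stably valued localising invariant''---is circular: exhibiting the universal target of left localising invariants as stable is precisely the content of part~(1), and a left localising $F$ valued in an arbitrary pointed, finitely complete $\cX$ has no a priori reason to factor through any stable category. The paper does not take this detour; it asserts the vanishing directly from the functorial Eilenberg swindle on $\cC\otimes\cF$. You should plan to argue this step inside $\cX$ itself rather than via such a reduction.

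For part~(2), the paper's argument is slightly slicker than your explicit quasi-inverse: right completeness gives $\cY \simeq \lim_\Omega \cY_{\geq 0}$, whence $\Fun^\lloc(\exact,\cY) \simeq \lim_\Omega \Fun^\lloc(\exact,\cY_{\geq 0})$, and the right-hand side is already stable by part~(1) and thus its own $\Omega$-stabilisation. Your colimit formula $\colim_n \Sigma_\cY^{-n} G(-\otimes\cS^{\otimes n})$ is this inverse written out, and the two arguments amount to the same thing.
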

\begin{proof}
    Precomposition with $(-)^\op$ induces an equivalence between the categories of left localising and right localising functors, so we only have to consider left localising functors.
    
    Since $\Fun^\lloc(\exact, \cX_{\geq 0})$ is pointed and has finite limits (which are computed pointwise), stability follows once we show that $\Omega$ is invertible.

    Let $\cF$ be the additive category of countably generated projective $\bbS$-modules.
    This contains the category $\Proj(\bbS)$ of finitely generated free $\bbS$-modules as a full additive subcategory.
    Set $\cQ := \cF / \Proj(\bbS)$.
    By \cref{cor:tensor-special}, we obtain for every left localising functor a natural fibre sequence
    \[ F(-) \to F(- \otimes \cF) \to F(- \otimes \cQ) \]
    because the tensor product commutes with colimits in both variables.
    Since $\cF$ admits a functorial Eilenberg swindle, the same is true for $\cC \otimes \cF$ for any exact category $\cC$.
    Hence $\Omega F(- \otimes \cQ) \simeq F$.
    Since precomposition with $- \otimes \cQ$ and postcomposition with $\Omega$ commute, this shows that $\Omega$ has both a left and a right inverse.

    If filtered colimits commute with finite limits, then the loops functor on finitary, left localising functors is still given by pointwise application of $\Omega$.
    Since $- \otimes \cQ$ commutes with colimits, it follows that the finitary, left localising functors form a stable subcategory of all left localising functors.

    The statement about left localising functors $\exact \to \cY$ is immediate from the stability of $\Fun^\lloc(\exact, \cY_{\geq 0})$ since
    \begin{align*}
        \Fun^\lloc(\exact,\cY)
        &\simeq \Fun^\lloc(\exact, \lim_\Omega \cY_{\geq 0}) \\
        &\simeq \lim_\Omega \Fun^\lloc(\exact,\cY_{\geq 0}),
    \end{align*}
    where $\lim_\Omega$ denotes the sequential limit over iterates of the loops functor.
    Moreover, observe that any left localising functor $F \colon \exact \to \cY$ satisfies
    \begin{align*}
        F \simeq \colim_n \tau_{\geq -n}F \simeq \colim_n \Omega^n\tau_{\geq 0}\Sigma^n F \simeq \colim_n \Omega^n\tau_{\geq 0}F(- \otimes \cQ^{\otimes n})
    \end{align*}
    by right completeness of the t-structure and the preceding discussion.
    If $\tau_{\geq 0}$ preserves filtered colimits, it follows that $F$ is finitary if and only if $\tau_{\geq 0}F$ is finitary.
\end{proof}

\begin{rem}
 Let $\cX$ be a stable category with a right complete t-structure and
 let $F \colon \exact \to \cX_{\geq 0}$ be a left localising functor.
 Suppose that there exists a natural equivalence $F \simeq F \circ (-)^\op$.
 Then $F$ is both left and right localising, so there exist two a priori different deloopings $F^\lloc$ and $F^\rloc$ of $F$ as a left localising and right localising functor, respectively.

 However, $F^\rloc \circ (-)^\op$ is by definition a left localising invariant satisfying
 \[ \tau_{\geq 0} \circ F^\rloc \circ (-)^\op \simeq F \circ (-)^\op \simeq F, \]
 so $F^\rloc \simeq F^\lloc$.
 In particular, such a functor admits a unique delooping which is both left and right localising and is invariant under taking opposite categories.
 Note, however, that this identification does depend on the choice of equivalence $F \simeq F \circ (-)^\op$.
\end{rem}

\begin{defn}
    Define the non-connective algebraic K-theory functor
    \[ \bbK \colon \exact \to \Sp \]
    as the (left or right) localising functor satisfying $\tau_{\geq 0}\bbK \simeq \K \circ \idem{(-)}$.
\end{defn}

\begin{prop}\label{prop:agreement}
    \ \begin{enumerate}
        \item The functor $\bbK$ coincides with Schlichting's non-connective algebraic K-theory functor on exact 1-categories introduced in \cite{schlichting:delooping}.
        \item The functor $\bbK$ coincides with the non-connective algebraic K-theory functor on stable categories defined by Blumberg, Gepner and Tabuada in \cite{bgt:alg-k}.
    \end{enumerate}
\end{prop}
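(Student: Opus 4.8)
The strategy is to exploit the rigidity of deloopings established in \cref{thm:delooping}~(2): a left localising functor $\exact \to \Sp$ is determined by its connective cover, and the same persists after restricting to suitable subcategories of $\exact$. As a preliminary step I would establish the non-connective Gillet--Waldhausen equivalence $\bbK \simeq \bbK \circ \derb$ (this is \cref{cor:gillet-waldhausen}, but it is needed here). The functor $\bbK \circ \derb \colon \exact \to \Sp$ is again left localising because, by \cref{rem:derb-special} and \cref{cor:derb-left-adjoint}, $\derb$ carries a left special inclusion $\cU \subseteq \cC$ together with its quotient to a cofibre sequence of stable categories $\derb(\cU) \to \derb(\cC) \to \derb(\cC/\cU)$; and on connective covers one computes
\[ \tau_{\geq 0}(\bbK \circ \derb) \simeq \K \circ \idem{(-)} \circ \derb \simeq \K \circ \derb \circ \idem{(-)} \simeq \K \circ \idem{(-)} \]
using \cref{cor:derb-preserves-idempotent-complete} and the connective Gillet--Waldhausen theorem \cite[Theorem~1.7]{sw:exact-stable}. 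Hence $\bbK \circ \derb \simeq \bbK$ by \cref{thm:delooping}~(2).

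For the comparison with Blumberg--Gepner--Tabuada's functor $\bbK^{\mathrm{BGT}}$, I would restrict $\bbK$ along the inclusion $\catst \hookrightarrow \exact$. One checks that the resulting functor $\bbK|_{\catst}$ is a localising invariant of stable $\infty$-categories in the sense of \cite{bgt:alg-k, ck:a1-invariance}: a Karoubi sequence of stable categories arises from a left special inclusion of stable subcategories together with its exact quotient, and $\bbK$ is invariant under idempotent completion by \cref{rem:localising-idem}, so $\bbK|_{\catst}$ sends such sequences to fibre sequences. Its connective cover $\K \circ \idem{(-)}$ agrees with Waldhausen K-theory of the idempotent completion, hence with the connective cover of $\bbK^{\mathrm{BGT}}$. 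By the uniqueness of spectral deloopings of localising invariants of stable categories \cite[Theorem~4.4.3]{ck:a1-invariance} --- equivalently, by the universal property characterising $\bbK^{\mathrm{BGT}}$ --- we obtain $\bbK|_{\catst} \simeq \bbK^{\mathrm{BGT}}$, which is assertion~(2).

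For the comparison with Schlichting's functor $\bbK^{\mathrm{Sch}}$ on exact $1$-categories, I would combine the previous two steps. For an exact $1$-category $\cC$ the preliminary step gives $\bbK(\cC) \simeq \bbK(\derb(\cC)) \simeq \bbK^{\mathrm{BGT}}(\derb(\cC))$. On the other hand, Schlichting's non-connective K-theory is invariant under passage to the bounded derived category, $\bbK^{\mathrm{Sch}}(\cC) \simeq \bbK^{\mathrm{Sch}}(\derb(\cC))$, and Schlichting's non-connective K-theory of a stable category coincides with that of Blumberg--Gepner--Tabuada, whence $\bbK^{\mathrm{Sch}}(\cC) \simeq \bbK^{\mathrm{BGT}}(\derb(\cC))$; combining these yields $\bbK(\cC) \simeq \bbK^{\mathrm{Sch}}(\cC)$, naturally in $\cC$. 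Alternatively, one can argue more directly by identifying the tensor constructions $- \otimes \cF$ and $- \otimes \cQ$ appearing in the proof of \cref{thm:delooping} with Schlichting's flasque ``cone'' and ``suspension'' constructions on the level of (connective) K-theory, so that the two deloopings of $\K \circ \idem{(-)}$ literally agree.

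The main obstacle I expect is the bookkeeping around the various notions of ``localisation sequence'': verifying that, restricted to stable categories, the left special inclusions with their exact quotients recover precisely the Karoubi sequences (and, for the alternative route, that Schlichting's localisation sequences of exact $1$-categories are a special case of left special inclusions), together with pinning down the precise form of the comparison $\bbK^{\mathrm{Sch}} \simeq \bbK^{\mathrm{BGT}} \circ \derb$ invoked in the last step. Once these identifications are in place, everything else is a formal consequence of the uniqueness of deloopings.
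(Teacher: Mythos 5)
Your argument for part (2) is a legitimate alternative to the paper's: instead of exhibiting an explicit model of the delooping, you characterise $\bbK|_{\catst}$ abstractly as a localising invariant of stable categories with prescribed connective cover and invoke the Cisinski--Khan uniqueness theorem. The bookkeeping you flag (matching Karoubi sequences of stable categories with left special inclusions and their exact quotients) does go through using \cref{cor:derb-left-adjoint} together with the non-connective Gillet--Waldhausen equivalence you establish first, and there is no circularity since the paper's proof of \cref{cor:gillet-waldhausen} does not depend on \cref{prop:agreement}. This buys a cleaner, purely formal proof of (2) at the cost of relying on the external uniqueness theorem rather than a direct identification of constructions.

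Part (1), however, has a genuine gap. Your main route rests on two external inputs: that Schlichting's functor satisfies $\bbK^{\mathrm{Sch}}(\cC) \simeq \bbK^{\mathrm{Sch}}(\derb(\cC))$, and that Schlichting's non-connective K-theory ``of a stable category'' agrees with $\bbK^{\mathrm{BGT}}$. The second statement does not typecheck as written: Schlichting's functor is defined on exact $1$-categories (and Frobenius pairs/complicial categories), not on stable $\infty$-categories, so comparing it with $\bbK^{\mathrm{BGT}}(\derb(\cC))$ already requires identifying Schlichting's delooping of a $1$-categorical model of the derived category with the BGT delooping of the stable $\infty$-category $\derb(\cC)$ of this paper --- which is essentially the content of assertion (1), deferred to an unspecified citation. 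Your alternative route, identifying the two deloopings of $\K \circ \idem{(-)}$ directly, is what the paper actually does, but not via $- \otimes \cF$ and $- \otimes \cQ$: the paper introduces the countable envelope $\cF\cC \subseteq \psh_\lex(\cC)$ of sequential colimits along ingressives, proves the non-trivial fact that $\cF\cC$ is extension-closed in $\psh_\lex(\cC)$ (this occupies most of the proof), deduces from its Eilenberg swindle and \cref{thm:delooping} the formula $\bbK(\cC) \simeq \colim_n \Omega^n \K(\cQ^n\cC)$ with $\cQ\cC = \cF\cC/\cC$, and then recognises $\cF\cC$ and $\cQ\cC$ as literally Schlichting's countable envelope and suspension when $\cC$ is an exact $1$-category (using that the quotient is again a $1$-category), and $\cF\cC$ as the $\aleph_1$-compact objects of $\Ind(\cC)$ when $\cC$ is stable, recovering the BGT construction. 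None of this identification work, which is where the actual content of the proposition lies, appears in your proposal.
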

\begin{proof}
    The proof that Schlichting's delooping coincides with the canonical one amounts to an alternative argument for \cref{thm:delooping} using a generalisation of Schlichting's construction.

    Let $\cC$ be an exact category.
    Define $\cF\cC \subseteq \psh_\lex(\cC)$ as the full subcategory on those objects which are equivalent to a sequential colimit
    \[ \yo_\lex(x_0) \to \yo_\lex(x_1) \to \yo_\lex(x_2) \to \ldots \]
    along (images of) ingressives in $\cC$.
    In the following, we will omit the Yoneda embedding $\yo_\lex$ from notation.
    
    To show that $\cF\cC$ is extension-closed, consider a cofibre sequence $X \to Y \xrightarrow{p} Z$ and choose diagrams $x_0 \rightarrowtail x_1 \rightarrowtail x_2 \rightarrowtail \ldots$ and $z_0 \rightarrow z_1 \rightarrowtail z_2 \rightarrowtail \ldots$ whose colimits are $X$ and $Z$, respectively.
    Pulling back the filtration of $Z$ along $p$, we obtain a diagram
    \[ Y \times_Z z_0 \to Y \times_Z z_1 \to Y \times_Z z_2 \to \ldots \]
    whose colimit is $Y$.
    For fixed $n$, we have a cofibre sequence
    \[ X \to Y \times_Z z_n \to z_n. \]
    Since $\cC$ is left special in $\psh_\lex(\cC)$, there exists a pushout
    \[\begin{tikzcd}
     y_n'\ar[r, tail]\ar[d] & y_n\ar[d] \\
     X\ar[r] & Y \times_Z z_n
     \end{tikzcd}\]
     with $y_n' \rightarrowtail y_n$ ingressive in $\cC$.
    Since $X \simeq \colim_n x_n$, the morphism $y_n'\to X$ factors over some $x_{k_n}$, and we conclude that
    \[ Y \times_Z z_n \simeq \colim_{k \geq k_n} \left( x_k \cup_{y_n'} y_n \right). \]
    Without loss of generality, we may assume that the sequence $(k_n)_n$ is strictly increasing.
    This presents $Y$ as the colimit of a diagram
    \[ \colim_{k \geq k_0} \left( x_k \cup_{y_0'} y_0 \right) \to \colim_{k \geq k_1} \left( x_k \cup_{y_1'} y_1 \right) \to \colim_{k \geq k_2} \left( x_k \cup_{y_2'} y_2 \right) \to \ldots \]
    We define another sequence $(l_n)_n$ by induction.
    Set $l_0 := k_0$. Assuming $l_n$ has been defined, choose $l_{n+1} >l_n$ such that
    \begin{align*}
        l_{n+1} > \min \{ k \mid k \geq k_{n+1} \text{ and }& x_{l_n} \cup_{y_n'} y_n \to \colim_{k \geq k_{n+1}} (x_k \cup_{y_{n+1}'} y_{n+1}) \\ &\quad\text{factors over } x_{l_{n+1}} \cup_{y_{n+1}'} y_{n+1} \}.
    \end{align*}
    Then the objects $a_n := x_{l_n} \cup_{y_n'} y_n$ assemble into a directed system
    \[ a_0 \to a_1 \to a_2 \to \ldots \]
    Each connecting map in this system fits into a diagram of exact sequences
    \[\begin{tikzcd}
        x_{l_n}\ar[r, tail]\ar[d, tail] & a_n\ar[r, two heads]\ar[d] & z_n\ar[d, tail] \\
        x_{l_{n+1}}\ar[r, tail] & a_{n+1}\ar[r, two heads] & z_{n+1}
    \end{tikzcd}\]
    Since the two outer vertical arrows are ingressive as indicated, the middle vertical is also ingressive.
    Taking the colimit of these diagrams over $n$, we obtain a cofibre sequence
    \[ \colim_n x_{l_n}\to \colim_n a_n\to \colim_n z_n \]
    which maps to the exact sequence $X \to Y \to Z$.
    Since this transformation is an equivalence on the two outer terms, it follows that $Y \simeq \colim_n a_n$.
    Hence $\cF\cC$ is an extension-closed subcategory of $\psh_\lex(\cC)$.

    Since $\cC$ is left special in $\psh_\lex(\cC)$, it is also left special in $\cF\cC$. Denote by $\cQ\cC := \cF\cC/\cC$ the quotient exact category.
    Then we obtain a functorial fibre sequence
    \[ \K(\idem{\cC}) \to \K(\idem{(\cF\cC)}) \to \K(\idem{(\cQ\cC)}) \]
    by \cref{cor:k-localising}.
    As in the proof of \cref{thm:delooping}, it follows that
    \[ \bbK(\cC) \simeq \colim_n \Omega^n\K(\idem{(\cQ^n\cC)}) \simeq \colim_n \Omega^n\K(\cQ^n\cC)), \]
    where the second equivalence is a consequence of the cofinality theorem \cite[Theorem~10.11]{barwick:algK}.
    
    In the case of an exact 1-category, $\cF\cC$ is exactly the ``countable envelope'' used in \cite[Section~3]{schlichting:delooping}, see also \cite[Appendix~B]{keller:stable-cats}.
    It follows from \cite[Lemma~3.2]{schlichting:delooping} and \cite[Theorem~3.12]{winges:localisation} that the quotient exact category $\cQ$ is an exact 1-category, so this construction reproduces precisely Schlichting's definition of non-connective algebraic K-theory in \cite{schlichting:delooping}.

    For $\cC$ a stable category, $\cF\cC$ coincides with the full subcategory of $\aleph_1$-compact objects in $\Ind(\cC)$ because every countable filtered diagram receives a cofinal functor from the poset $\bbN$.
    Hence, this construction simultaneously reproduces the construction of non-connective algebraic K-theory from \cite[Section~9]{bgt:alg-k}.
\end{proof}

\begin{cor}\label{cor:nc-K-universal-property}
    Non-connective algebraic K-theory is the initial left localising, $\Sp$-valued invariant under $\Sigma^\infty \iota$.
\end{cor}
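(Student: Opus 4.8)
The plan is to bootstrap from the connective case using the delooping result \cref{thm:delooping}. Since $\Sigma^\infty\iota$ takes values in connective spectra and $\Sp$ carries the right complete $t$-structure with connective part $\Sp_{\geq 0}$, the second assertion of \cref{thm:delooping} gives an equivalence $\tau_{\geq 0}\colon\Fun^\lloc(\exact,\Sp)\xrightarrow{\sim}\Fun^\lloc(\exact,\Sp_{\geq 0})$. As the inclusion $\Sp_{\geq 0}\hookrightarrow\Sp$ is left adjoint to $\tau_{\geq 0}$ and $\Sigma^\infty\iota$ is connective, every left localising $F\colon\exact\to\Sp$ satisfies
\[ \Map_{\Fun(\exact,\Sp)}(\Sigma^\infty\iota,F)\simeq\Map_{\Fun(\exact,\Sp_{\geq 0})}(\Sigma^\infty\iota,\tau_{\geq 0}F). \]
Hence $\tau_{\geq 0}$ induces an equivalence between the coslice category of left localising $\Sp$-valued invariants under $\Sigma^\infty\iota$ and the coslice category of left localising $\Sp_{\geq 0}$-valued invariants under $\Sigma^\infty\iota$, carrying $\bbK$ — equipped with the transformation induced by the connective comparison map $\Sigma^\infty\iota\to\K\circ\idem{(-)}\simeq\tau_{\geq 0}\bbK$ — to $\K\circ\idem{(-)}$. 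It therefore suffices to show that $\K\circ\idem{(-)}$ is the initial left localising functor $\exact\to\Sp_{\geq 0}$ receiving a natural transformation from $\Sigma^\infty\iota$.

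For this I would invoke the universal property of connective algebraic $K$-theory as the initial \emph{additive} invariant under $\Sigma^\infty\iota$: the additivity theorem for $K$-theory of exact $\infty$-categories \cite{barwick:algK}, together with the cofinality theorem \cite[Theorem~10.11]{barwick:algK} for passing to idempotent completions, exhibits $\K\circ\idem{(-)}$ as the initial reduced functor $\exact\to\Sp_{\geq 0}$ that satisfies additivity and receives a transformation from $\Sigma^\infty\iota$ (this is the group-completion characterisation of connective $K$-theory; see also \cite{bgt:alg-k} for the case of stable categories, from which the exact case can be recovered via the connective Gillet--Waldhausen theorem \cite[Theorem~1.7]{sw:exact-stable}). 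Granting this, the corollary follows formally. Every left localising functor is additive: applying a left localising $F$ to the left special sequence $\cC\to S_2(\cC)\to\cC$ coming from the $2$-simplices of Waldhausen's $S_\bullet$-construction exhibits the additivity equivalence for $F$. Thus the coslice of left-localising-under-$\Sigma^\infty\iota$ functors is a full subcategory of the coslice of additive-under-$\Sigma^\infty\iota$ functors; since $\K\circ\idem{(-)}$ is left localising by \cref{cor:k-localising} and is the initial object of the latter, it is also the initial object of the former.

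I expect the main obstacle to be pinning down the connective universal property — that additivity forces corepresentability by $\Sigma^\infty\iota$ — in the exact $\infty$-categorical setting with the precision the argument requires. Although this is morally the statement that connective $K$-theory is the group completion of the groupoid core and I expect it to be known, its cleanest citable incarnation may demand either a direct analysis of the $K$-theory space $\Omega\lvert\iota S_\bullet(-)\rvert$ through the additivity theorem, or a reduction to stable categories via the stable envelope $\derb$ in the spirit of \cref{prop:agreement}, whose delooping construction (with $\cQ\cC:=\cF\cC/\cC$) goes through verbatim for an arbitrary left localising functor in place of $K$-theory. A secondary, routine point to verify is that the $S_2$-construction genuinely furnishes a left special sequence of exact $\infty$-categories, so that left localising functors are indeed additive.
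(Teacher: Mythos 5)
Your argument is essentially the paper's proof: reduce to the connective case via \cref{thm:delooping} and the connectivity of $\Sigma^\infty\iota$, observe that left localising invariants are additive, and invoke the universal property of connective $\K$ as the initial additive invariant under $\Sigma^\infty\iota$. The "main obstacle" you flag is handled in the paper by a direct citation (to the known universality theorem for connective K-theory), and the additivity of left localising invariants is likewise simply asserted there, so no further work is needed beyond what you outline.
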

\begin{proof}
 Observing that every left localising invariant is in particular additive,
 this follows from \cref{thm:delooping} and the universality of $\K$ (see eg \cite[Theorem~5.1]{hls:localisation}): for every left localising invariant $F \colon \exact \to \Sp$, we have
 \[ \Nat(\bbK,F) \simeq \Nat(\K,\tau_{\geq 0}F) \simeq \Nat(\Sigma^\infty \iota, \tau_{\geq 0}F) \simeq \Nat(\Sigma^\infty \iota,F). \qedhere \]
\end{proof}

\begin{cor}[Gillet--Waldhausen]\label{cor:gillet-waldhausen}
    Let $\cX$ be a stable category with a right complete t-structure and let $F \colon \exact \to \cX_{\geq 0}$  be a left localising invariant.
    \begin{enumerate}
        \item If the unit transformation $\cC \to \derb(\cC)$ induces an equivalence $F(\cC) \to F(\derb(\cC))$ for all exact categories $\cC$, then the same is true for the non-connective delooping of $F$.
        \item Assume in addition that $\cX_{\leq 0}$ is closed under filtered colimits and that $F$ is finitary.
        Then $F(\cC) \to F(\derb(\cC))$ is an equivalence for every small exact category $\cC$.
        \item The unit transformation induces an equivalence
        \[ \bbK(\cC) \xrightarrow{\sim} \bbK(\derb(\cC)) \]
        for every exact category $\cC$.
    \end{enumerate}
\end{cor}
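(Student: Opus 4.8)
The plan is to leverage the uniqueness of non-connective deloopings from \cref{thm:delooping}. Write $\tilde F \colon \exact \to \cX$ for the non-connective delooping of $F$, so that $\tilde F$ is left localising and $\tau_{\geq 0}\tilde F \simeq F$. First I would check that the composite $\tilde F \circ \derb$---with $\derb \colon \exact \to \catst$ regarded as landing in exact categories---is left localising. For a left special inclusion $\cU \subseteq \cC$, \cref{rem:derb-special} gives that $\derb(\cU) \to \derb(\cC)$ is fully faithful, and by the argument of \cref{lem:derb-karoubi} the sequence $\derb(\cU) \to \derb(\cC) \to \derb(\cC/\cU)$ is a cofibre sequence of stable categories, exhibiting $\derb(\cC/\cU)$ as the quotient exact category of the inclusion $\derb(\cU) \subseteq \derb(\cC)$. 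This inclusion is moreover left special, because a fully faithful exact inclusion of stable categories always is: for an ingressive $i$ whose cofibre $c$ lies in the subcategory, rotating the cofibre sequence of $i$ exhibits $i$ as the pushout of the map $\Sigma^{-1}c \to 0$, which lies in the subcategory since the latter is closed under $\Sigma^{-1}$. Feeding this into the left localising property of $\tilde F$ shows $\tilde F \circ \derb$ is left localising.

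Next I would compare connective covers. Since $\tau_{\geq 0}$ commutes with precomposition along $\derb$, we have $\tau_{\geq 0}(\tilde F \circ \derb) \simeq (\tau_{\geq 0}\tilde F)\circ\derb \simeq F \circ \derb$, and by hypothesis the $\derb$-unit induces a natural equivalence $F \xrightarrow{\sim} F \circ \derb$. Thus $\tilde F$ and $\tilde F \circ \derb$ are left localising functors $\exact \to \cX$ with identified connective covers, compatibly with the natural transformation $\tilde F \Rightarrow \tilde F \circ \derb$ induced by the $\derb$-unit. Since $\tau_{\geq 0} \colon \Fun^\lloc(\exact,\cX) \to \Fun^\lloc(\exact,\cX_{\geq 0})$ is an equivalence by \cref{thm:delooping}, that transformation is already an equivalence---which is exactly the statement that $\tilde F(\cC) \to \tilde F(\derb(\cC))$ is an equivalence for every exact category $\cC$.

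For the assertion about $\bbK$, apply the first part to $F := \K \circ \idem{(-)} \colon \exact \to \Sp_{\geq 0}$, with $\cX = \Sp$ and its standard t-structure; by definition its non-connective delooping is $\bbK$. It remains to verify the hypothesis, namely that the unit induces an equivalence $\K(\idem{\cC}) \to \K(\idem{\derb(\cC)})$ for every exact $\cC$; this follows by combining $\idem{\derb(\cC)} \simeq \derb(\idem{\cC})$ from \cref{cor:derb-preserves-idempotent-complete} with the Gillet--Waldhausen theorem in connective K-theory \cite[Theorem~1.7]{sw:exact-stable}, applied to the idempotent complete category $\idem{\cC}$.

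The step I expect to require the most care is showing $\tilde F \circ \derb$ is left localising---in particular identifying $\derb(\cC)/\derb(\cU)$ with $\derb(\cC/\cU)$ and recognising $\derb(\cU) \subseteq \derb(\cC)$ as a left special inclusion of exact categories. As a cross-check, and as an alternative route bypassing this discussion, one can instead use the explicit delooping formula $\tilde F(\cD) \simeq \colim_n \Omega^n F(\cD \otimes \cQ^{\otimes n})$ visible in the proof of \cref{thm:delooping} (where $\cQ$ is the ``Calkin category'' appearing there): the symmetric monoidality of $\derb$ together with $\derb$ restricting to the identity on stable categories (from \cref{cor:exact-symm-monoidal} and \cref{cor:derb-left-adjoint}) identifies $\derb(\eta \otimes \id_{\cQ^{\otimes n}})$ with an equivalence for the unit $\eta \colon \cC \to \derb(\cC)$, so the hypothesis on $F$ forces $F(\eta \otimes \id_{\cQ^{\otimes n}})$, and hence in the colimit $\tilde F(\eta)$, to be an equivalence.
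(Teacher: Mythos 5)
Your argument is correct and follows the paper's own proof: you show that the non-connective delooping precomposed with $\derb$ is still left localising (the paper routes this step through \cref{lem:derb-karoubi} and \cref{rem:localising-idem}, i.e.\ through idempotent completions, whereas you argue directly that $\derb(\cU) \subseteq \derb(\cC)$ is left special with Verdier quotient $\derb(\cC/\cU)$), then conclude from the conservativity of $\tau_{\geq 0}$ on left localising functors supplied by \cref{thm:delooping}, and deduce the K-theoretic statement from \cref{cor:derb-preserves-idempotent-complete} together with the connective Gillet--Waldhausen theorem. Your alternative cross-check via the formula $\tilde F \simeq \colim_n \Omega^n F(- \otimes \cQ^{\otimes n})$ and the symmetric monoidality of $\derb$ is also sound and has the merit of bypassing the identification of quotient exact categories altogether.
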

\begin{proof}
    \cref{lem:derb-karoubi} and \cref{rem:localising-idem} imply that $F \circ \derb$ is left localising, so the first assertion follows by observing that taking connective covers of left localising $\cX$-valued functors is conservative by \cref{thm:delooping}.

    For the second assertion, the inclusion $\cC \to \derb_{\geq 0}(\cC)$ is left special by \cref{lem:pshlex-special}.
    Since $\derb_{\geq 0}\cC$ is generated by $\cC$ under finite colimits, the quotient of $\derb_{\geq 0}(\cC)$ by $\cC$ is trivial.
    It follows that $F(\cC) \to F(\derb_{\geq 0}(\cC))$ is an equivalence because $F$ is left localising.
    Moreover, $F$ is invariant under Spanier--Whitehead stabilisation because it preserves filtered colimits.
    
    The assertion about algebraic K-theory follows by combining the first and second assertion with \cref{cor:k-localising}.
\end{proof}

\begin{cor}\label{cor:k-lax-monoidal}
    The non-connective algebraic K-theory functor
    \[ \bbK \colon \exact \to \Sp \]
    admits a lax symmetric monoidal refinement.
\end{cor}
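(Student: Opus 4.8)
The plan is to realise $\bbK$ as a composite of a symmetric monoidal functor with a lax symmetric monoidal one, and then to transport the resulting structure along a natural equivalence. Three ingredients go in. First, \cref{cor:exact-symm-monoidal}~\eqref{cor:exact-symm-monoidal-2} refines the stable envelope $\derb \colon \exact \to \catst$ to a symmetric monoidal functor. Secondly, by \cref{prop:agreement} the restriction of $\bbK$ along the fully faithful inclusion $\inc \colon \catst \hookrightarrow \exact$ is the non-connective algebraic K-theory functor on stable categories of Blumberg--Gepner--Tabuada, which carries a lax symmetric monoidal refinement: it is corepresented by the tensor unit of the symmetric monoidal category of noncommutative motives \cite{bgt:alg-k}, and the structure extends to all small stable categories by precomposition with the symmetric monoidal idempotent completion functor, using invariance of non-connective K-theory under idempotent completion. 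Thirdly, applying \cref{cor:gillet-waldhausen} to the left localising invariant $\K \circ \idem{(-)}$ of \cref{cor:k-localising} shows that the unit transformation $\cC \to \derb(\cC)$ induces a natural equivalence
\[ \bbK \xrightarrow{\ \sim\ } (\bbK \circ \inc) \circ \derb. \]

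Since a composite of a symmetric monoidal functor followed by a lax symmetric monoidal functor is again lax symmetric monoidal, the functor $(\bbK \circ \inc) \circ \derb$ is lax symmetric monoidal, and transporting this structure along the displayed equivalence equips $\bbK$ with the desired refinement. I expect the main difficulty to be one of coherence rather than of mathematical content: one has to assemble the lax symmetric monoidal structure on $\bbK \circ \inc$ as a genuine morphism of $\infty$-operads out of the cited multiplicative properties of K-theory of stable categories, and one has to promote the Gillet--Waldhausen equivalence of \cref{cor:gillet-waldhausen} to an equivalence of lax symmetric monoidal functors, not merely of underlying functors, so that the transported structure is well defined.

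Finally, it is worth noting that one could instead attempt a proof internal to the present framework: \cref{thm:delooping} makes $\Fun^\lloc(\exact,\Sp)$ stable, and one would try to equip it with a symmetric monoidal structure via Day convolution along $\exact^\otimes$ under which $\bbK$ is the unit, so that \cref{cor:nc-K-universal-property} exhibits $\bbK$ as a commutative algebra object and hence as a lax symmetric monoidal functor; however, verifying that the relevant localisation of $\Fun(\exact,\Sp)$ is symmetric monoidal is delicate, and I expect the route via $\derb$ to be considerably shorter.
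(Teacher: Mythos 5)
Your proposal is correct and follows essentially the same route as the paper: the paper also writes $\bbK \simeq (\bbK|_{\catst}) \circ \derb$ using \cref{prop:agreement}, the symmetric monoidality of $\derb$ from \cref{cor:exact-symm-monoidal}, and the Gillet--Waldhausen equivalence of \cref{cor:gillet-waldhausen}, and then invokes the known lax symmetric monoidal refinement of non-connective K-theory of stable categories (the paper cites Blumberg--Gepner--Tabuada's multiplicativity paper directly rather than rederiving it from corepresentability in noncommutative motives). The coherence concerns you raise are reasonable but are absorbed by that citation, so no further work is needed.
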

\begin{proof}
    Non-connective algebraic K-theory is known to refine to a lax symmetric monoidal functor $\catst \to \Sp$ by \cite[Corollary~1.6]{bgt-multiplicative}; note that \textit{loc.\ cit.} speaks about the same functor as we do by \cref{prop:agreement}.
    Since $\derb$ is symmetric monoidal by \cref{cor:exact-symm-monoidal}~\eqref{cor:exact-symm-monoidal-2}, the corollary follows from \cref{cor:gillet-waldhausen}.
\end{proof}

\bibliographystyle{alpha}
\bibliography{gabrielquillen}

@article{bs:idempotent-completion,
 author = {Balmer, P. and Schlichting, M.},
 title = {Idempotent completion of triangulated categories},
 fjournal = {Journal of Algebra},
 journal = {J. Algebra},
 issn = {0021-8693},
 volume = {236},
 number = {2},
 pages = {819--834},
 year = {2001},
 doi = {10.1006/jabr.2000.8529},
}

@Article{barwick:heart,
 Author = {Barwick, C.},
 Title = {On exact {{\(\infty\)}}-categories and the theorem of the heart},
 FJournal = {Compositio Mathematica},
 Journal = {Compos. Math.},
 Volume = {151},
 Number = {11},
 Pages = {2160--2186},
 Year = {2015},
 DOI = {10.1112/S0010437X15007447},
}

@Article{barwick:algK,
 Author = {Barwick, C.},
 Title = {On the algebraic {{\(K\)}}-theory of higher categories},
 FJournal = {Journal of Topology},
 Journal = {J. Topol.},
 ISSN = {1753-8416},
 Volume = {9},
 Number = {1},
 Pages = {245--347},
 Year = {2016},
 DOI = {10.1112/jtopol/jtv042},
}

@Article{bgt:alg-k,
 Author = {Blumberg, A. J. and Gepner, D. and Tabuada, G.},
 Title = {A universal characterization of higher algebraic {{\(K\)}}-theory},
 FJournal = {Geometry \& Topology},
 Journal = {Geom. Topol.},
 ISSN = {1465-3060},
 Volume = {17},
 Number = {2},
 Pages = {733--838},
 Year = {2013},
 DOI = {10.2140/gt.2013.17.733},
}

@article{bgt-multiplicative,
 author = {Blumberg, A. J. and Gepner, D. and Tabuada, G.},
 title = {Uniqueness of the multiplicative cyclotomic trace},
 fjournal = {Advances in Mathematics},
 journal = {Adv. Math.},
 issn = {0001-8708},
 volume = {260},
 pages = {191--232},
 year = {2014},
 doi = {10.1016/j.aim.2014.02.004},
}

@Article{buehler,
 Author = {B{\"u}hler, T.},
 Title = {Exact categories},
 FJournal = {Expositiones Mathematicae},
 Journal = {Expo. Math.},
 ISSN = {0723-0869},
 Volume = {28},
 Number = {1},
 Pages = {1--69},
 Year = {2010},
 DOI = {10.1016/j.exmath.2009.04.004},
}

@unpublished{ck:a1-invariance,
 AUTHOR = {Cisinski, D.-C. and Khan, A.},
 TITLE = {$\mathbf{A}^1$-homotopy invariance in spectral algebraic geometry},
 NOTE = {\href{https://arxiv.org/abs/1705.03340}{arXiv:1705.03340}},
}

@article{GGN:multiplicative-loop-spaces,
 AUTHOR = {Gepner, D. and Groth, M. and Nikolaus, T.},
 TITLE = {Universality of multiplicative infinite loop space machines},
 FJOURNAL = {Algebraic \& Geometric Topology},
 JOURNAL = {Algebr. Geom. Topol.},
 ISSN = {1472-2747},
 VOLUME = {15},
 NUMBER = {6},
 PAGES = {3107--3153},
 YEAR = {2015},
 DOI = {10.2140/agt.2015.15.3107},
}

@Article{hhln:lax-adjunctions,
 Author = {Haugseng, R. and Hebestreit, F. and Linskens, S. and Nuiten, J.},
 Title = {Lax monoidal adjunctions, two-variable fibrations and the calculus of mates},
 FJournal = {Proceedings of the London Mathematical Society. Third Series},
 Journal = {Proc. Lond. Math. Soc. (3)},
 ISSN = {0024-6115},
 Volume = {127},
 Number = {4},
 Pages = {889--957},
 Year = {2023},
 DOI = {10.1112/plms.12548},
}

@article{hls:localisation,
    author={Hebestreit, F. and Lachmann, A. and Steimle, W.},
    title={The localisation theorem for the {K}-theory of stable $\infty$-categories}, 
    journal={Proceedings of the Royal Society of Edinburgh: Section A Mathematics}, 
    year={2023}, 
    pages={1--37}
}

@article{kl:cohomology-exact-categories,
 author = {Kaledin, D. and Lowen, W.},
 title = {Cohomology of exact categories and (non-)additive sheaves},
 fjournal = {Advances in Mathematics},
 journal = {Adv. Math.},
 issn = {0001-8708},
 volume = {272},
 pages = {652--698},
 year = {2015},
 doi = {10.1016/j.aim.2014.11.016},
}

@InCollection{keller:derived-cats,
 AUTHOR = {Keller, B.},
 TITLE = {Derived categories and their use},
 BOOKTITLE = {Handbook of algebra. Volume 1},
 ISBN = {0-444-82212-7},
 PAGES = {671--701},
 YEAR = {1996},
 PUBLISHER = {Amsterdam: North-Holland},
}

@Article{keller:stable-cats,
 Author = {Keller, B.},
 Title = {Chain complexes and stable categories},
 FJournal = {Manuscripta Mathematica},
 Journal = {Manuscr. Math.},
 ISSN = {0025-2611},
 Volume = {67},
 Number = {4},
 Pages = {379--417},
 Year = {1990},
 DOI = {10.1007/BF02568439},
}

@article{klemenc:stablehull,
 AUTHOR = {Klemenc, J.},
 TITLE = {The stable hull of an exact {{\(\infty\)}}-category},
 FJOURNAL = {Homology, Homotopy and Applications},
 JOURNAL = {Homology Homotopy Appl.},
 ISSN = {1532-0073},
 VOLUME = {24},
 NUMBER = {2},
 PAGES = {195--220},
 YEAR = {2022},
 DOI = {10.4310/HHA.2022.v24.n2.a9},
}

@unpublished{HA,
	AUTHOR = {Lurie, J.},
	TITLE = {Higher {A}lgebra},
	NOTE = {Available on the author's homepage: \href{https://www.math.ias.edu/~lurie/papers/HA.pdf}{https://www.math.ias.edu/{\textasciitilde}lurie/papers/HA.pdf}},
}

@Book{HTT,
	AUTHOR = {Lurie, J.},
	TITLE = {{Higher topos theory}},
	FJOURNAL = {{Annals of Mathematics Studies}},
	JOURNAL = {{Ann. Math. Stud.}},
	VOLUME = {170},
	ISBN = {978-0-691-14049-0; 978-0-691-14048-3},
	PAGES = {xv + 925},
	YEAR = {2009},
	PUBLISHER = {Princeton, NJ: Princeton University Press},
	DOI = {10.1515/9781400830558},
}

@unpublished{SAG,
	AUTHOR = {Lurie, J.},
	TITLE = {Spectral {A}lgebraic {G}eometry},
	NOTE = {Available on the author's homepage: \href{https://www.math.ias.edu/~lurie/papers/SAG-rootfile.pdf}{https://www.math.ias.edu/{\textasciitilde}lurie/papers/SAG-rootfile.pdf}},
}

@Article{pstragowski:synthetic,
 Author = {Pstr{\k{a}}gowski, P.},
 Title = {Synthetic spectra and the cellular motivic category},
 FJournal = {Inventiones Mathematicae},
 Journal = {Invent. Math.},
 ISSN = {0020-9910},
 Volume = {232},
 Number = {2},
 Pages = {553--681},
 Year = {2023},
 DOI = {10.1007/s00222-022-01173-2},
}

@Misc{quillen:k-theory,
 Author = {Quillen, D.},
 Title = {Higher algebraic {{\(K\)}}-theory. {I}},
 Year = {1973},
 HowPublished = {Algebr. {{\(K\)}}-{Theory} {I}, {Proc}. {Conf}. {Battelle} {Inst}. 1972, {Lect}. {Notes} {Math}. 341, 85-147},
 DOI = {10.1007/BFb0067053},
}

@unpublished{saunier:heart,
 AUTHOR = {Saunier, V.},
 TITLE = {{A Theorem of the Heart for K-theory of Endomorphisms}},
 NOTE = {\href{https://arxiv.org/abs/2311.13836}{arXiv:2311.13836}},
}

@article {sw:exact-stable,
    AUTHOR = {Saunier, Victor and Winges, Christoph},
     TITLE = {On exact categories and their stable envelopes},
   JOURNAL = {Math. Z.},
  FJOURNAL = {Mathematische Zeitschrift},
    VOLUME = {312},
      YEAR = {2026},
    NUMBER = {1},
     PAGES = {Paper No. 28, 28},
      ISSN = {0025-5874,1432-1823},
   MRCLASS = {18N60 (18E20 18F25 18G80 19D99)},
  MRNUMBER = {4989962},
       DOI = {10.1007/s00209-025-03904-6},
       URL = {https://doi.org/10.1007/s00209-025-03904-6},
}

@article{schlichting:delooping,
 AUTHOR = {Schlichting, M.},
 TITLE = {Delooping the {{\(K\)}}-theory of exact categories},
 FJOURNAL = {Topology},
 JOURNAL = {Topology},
 ISSN = {0040-9383},
 VOLUME = {43},
 NUMBER = {5},
 PAGES = {1089--1103},
 YEAR = {2004},
 DOI = {10.1016/j.top.2004.01.005},
}

@Misc{TT,
 Author = {Thomason, R. W. and Trobaugh, T.},
 Title = {Higher algebraic {K}-theory of schemes and of derived categories},
 Year = {1990},
 HowPublished = {The {Grothendieck} {Festschrift}, {Collect}. {Artic}. in {Honor} of the 60th {Birthday} of {A}. {Grothendieck}. {Vol}. {III}, {Prog}. {Math}. 88, 247--435},
}

@unpublished{winges:localisation,
 AUTHOR = {Winges, C.},
 TITLE = {Localisation theorems for the connective {K}-theory of exact categories},
 NOTE = {\href{https://arxiv.org/abs/2510.07170}{arXiv:2510.07170}},
}

\end{document}